\newtheorem{theorem}{Theorem}
\newtheorem{corollary}[theorem]{Corollary}
\newtheorem{lemma}[theorem]{Lemma}
\newtheorem{question}[theorem]{Question}
\theoremstyle{definition}
\newtheorem{definition}[theorem]{Definition}
\newtheorem{example}[theorem]{Example}
\theoremstyle{remark}
\newtheorem{remark}[theorem]{Remark}
\DeclareMathOperator{\conv}{Conv}
\DeclareMathOperator{\Gr}{\mathrm{Gr}}
\DeclareMathOperator{\GL}{\mathrm{GL}}
\DeclareMathOperator{\Trop}{\mathrm{Trop}}
\DeclareMathOperator{\TGr}{\mathrm{TropGr}}
\renewcommand{\P}{\mathrm{P}}
\newcommand{\M}{\mathsf{M}}
\newcommand{\U}{\mathsf{U}}
\newcommand{\Dr}{\mathrm{Dr}}
\newcommand{\LPM}{\mathrm{LPM}}
\newcommand\doi[1]{\href{http://dx.doi.org/#1}{\texttt{doi:#1}}}
\tikzstyle{lattice} = [draw=black, fill=black]
\tikzstyle{valid} = [draw=red, thick, fill=white]
\tikzstyle{intersect} = [draw=orange, fill=orange]
\tikzstyle{boundary} = [draw=blue, fill=blue]
\tikzstyle{triangle} = [draw=black, thick, fill=green!20]
\tikzstyle{inequality} = [draw=green, thick]
\tikzstyle{someline} = [draw=black, dashed]
\title[LPM Subdivisions, Positive Tropical Grassmannian and Amplituhedron]{Lattice Path Matroidal Subdivisions, Positive Tropical Grassmannian and Amplituhedron}
\author{Ayush Kumar Tewari and Ahmed Umer Ashraf}
\address[A.~K.~Tewari]{}
\email{tewari@math.tu-berlin.de}
\address[A.~U.~Ashraf]{Department of Mathematical and
Computational Sciences, University of Toronto,
Mississauga, Ontario, Canada.}
\email{aashra90@gmail.com}
\subjclass{52B40, 14T15, 81U99}
\keywords{Lattice path matroidal subdivision, LPMfan, LPM polytope decomposition}
\thanks{We would like to thank Michael Joswig and Luis Ferroni for going through earlier drafts of this article and for providing valuable suggestions and comments. We are also thankful to David Speyer for providing his comments and for pointing us to Luis Ferroni and his work on lattice path matroids.}
\pgfplotsset{compat=1.17}
\begin{document}

\maketitle

\begin{abstract}
We introduce the notion of lattice path matroidal subdivisions, or LPM subdivisions for short, and show that these subdivisions are regular and hence the weight vectors for them lie in the Dressian. This leads us to explore the structure of the set of these weights inside the Dressian and owing to the fact that Lattice path matroids are positroids, we move to the positive Dressian which in turn is equal to the positive tropical Grassmannian, an object of immense interest currently in Physics. This is related to the amplituhedron and  positive configuration space, which we describe here and wish to explore these connections further.
\end{abstract}

{
\hypersetup{linkcolor=blue}
\tableofcontents
}

\section{Introduction}

Lattice path matroids (LPM) \footnote{We use this abbreviation for lattice path matroid and lattice path matroidal depending on the context} were introduced by Bonin et.al in \cite{bonin2003lattice} and matroidal properties including the Tutte polynomial were derived for them. Subsequently, it was proven that they are positroids \cite{oh2011positroids} and also enjoy multiple connections with the positive Grassmannian. Lattice paths in themselves are ubiquitous in various topics within mathematics for example in combinatorics, representation theory, etc. In our work we see this feature helping us connect our study to various topics of not only mathematics but also to a recently defined concept in physics, the \emph{amplituhedron} \cite{arkani2014amplituhedron}, which is a geometric object encoding information concerning the scattering amplitudes of particles. 

We begin with the introduction of \emph{lattice path matroidal subdivisions}, which are matroidal subdivisions with each maximal cell corresponding to a lattice path matroid polytope. The idea for this class of subdivisions comes from the lattice path matroid polytope decompositions \cite{chatelain2011matroid}, which is a subclass of matroid base polytope decompositions, studied in detail in \cite{chatelain2011matroid,chatelain2014matroid}. Lattice path matroidal decompositions enjoy a unique property; they are obtained in an iterative way via simple decompositions into two LPMs, termed as a \emph{hyperplane split}. We harness this property to relate them to the well-known class of \emph{split subdivisions}. This relation eventually helps us in proving one of our first results.

\begin{theorem}
Any $\LPM$ subdivision of a lattice path matroid polytope $\P_{\M[P,Q]}$ is regular.
\end{theorem}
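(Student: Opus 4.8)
The plan is to realize an arbitrary $\LPM$ subdivision of $\P_{\M[P,Q]}$ as a common refinement of \emph{splits} of $\P_{\M[P,Q]}$, and then to use that a split is regular and that the common refinement of regular subdivisions is again regular. The starting point is the iterative description of lattice path matroidal decompositions (the point of \cite{chatelain2011matroid} invoked in the Introduction): the $\LPM$ subdivision $\Sigma$ under consideration is produced by a finite chain
\[
\bigl\{\,\P_{\M[P,Q]}\,\bigr\} \;=\; \Sigma_0 \;\preceq\; \Sigma_1 \;\preceq\; \cdots \;\preceq\; \Sigma_k \;=\; \Sigma ,
\]
where each $\Sigma_{j+1}$ is obtained from $\Sigma_j$ by replacing one maximal cell $C_j$ --- itself the base polytope of a lattice path matroid --- with the two $\LPM$ polytopes of a hyperplane split of $C_j$ along an affine hyperplane $H_j$, say $H_j=\{\,x:\ell_j(x)=0\,\}$.

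The crucial geometric step is to upgrade each such local split into a split of the ambient polytope. Concretely, I would show that $H_j$ is a split hyperplane for all of $\P_{\M[P,Q]}$: it meets $\interior\P_{\M[P,Q]}$ and, more importantly, it does not pass through the interior of any maximal cell of $\Sigma_j$ other than $C_j$. For matroid base polytopes the admissible split hyperplanes are of a restricted combinatorial shape, here dictated by the lattice path data of the two lattice path matroids output by the split, and one must check that the family $\{H_1,\dots,H_k\}$ accumulated along the chain is pairwise compatible in this sense. I expect this compatibility check --- which is exactly where the special combinatorics of lattice path matroids enters --- to be the main obstacle. Granting it, replacing $C_j$ by the hyperplane split along $H_j$ is the same operation as passing to the common refinement $\Sigma_j\wedge\mathcal S_{H_j}$, where $\mathcal S_{H_j}$ is the split of $\P_{\M[P,Q]}$ along $H_j$; running this down the chain gives
\[
\Sigma \;=\; \mathcal S_{H_1}\wedge\mathcal S_{H_2}\wedge\cdots\wedge\mathcal S_{H_k},
\]
so that $\Sigma$ is a split subdivision.

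Finally I would conclude regularity. Each split $\mathcal S_{H_j}$ is regular: if $\ell_j$ is normalized so that $H_j$ is a matroid split hyperplane, then the convex, piecewise-linear function $x\mapsto|\ell_j(x)|=\max(\ell_j(x),-\ell_j(x))$ has precisely the two maximal domains of linearity $\P_{\M[P,Q]}\cap\{\ell_j\ge 0\}$ and $\P_{\M[P,Q]}\cap\{\ell_j\le 0\}$ over $\P_{\M[P,Q]}$, and it agrees with its own lower convex hull there because the split face has vertices among the bases of $\M[P,Q]$. Since a common refinement of regular subdivisions is again regular, $\Sigma$ is regular; explicitly, $\Sigma$ is the regular subdivision induced by the weight vector $w$ with $w(B):=\sum_{j=1}^{k}|\ell_j(e_B)|$, where $e_B$ denotes the $0/1$ indicator vertex of a basis $B$ of $\M[P,Q]$ --- the point being that $\sum_j|\ell_j|$ is convex and piecewise-linear, agrees with its lower hull over $\P_{\M[P,Q]}$, and has domains of linearity equal to the cells of $\mathcal S_{H_1}\wedge\cdots\wedge\mathcal S_{H_k}=\Sigma$. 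This proves the theorem, and as a bonus it exhibits the weight vector of an arbitrary $\LPM$ subdivision as an explicit sum of ``split weights''.
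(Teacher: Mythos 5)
Your strategy is the same as the paper's: realize the $\LPM$ subdivision as a common refinement of compatible splits of the ambient polytope $\P_{\M[P,Q]}$, use that each split is regular with an explicit weight of the form $v\mapsto|\ell(v)|$ on one side and $0$ on the other, and obtain a weight for $\Sigma$ by summing split weights. The one place where you diverge is also the one place where you explicitly punt: you write ``Granting it'' for the claim that each locally introduced hyperplane $H_j$ is a split hyperplane of all of $\P_{\M[P,Q]}$ and that the resulting family is pairwise compatible. That is precisely the content the paper isolates into its two technical lemmas. For the extension, the paper (Lemma \ref{lem:split_ext}) works at the level of weight functions: a split weight $w_{S'}$ on the truncated polytope $\P_{\M[J]}=\P_{\M[P,Q]}\setminus(\P_{\M[P,Q]}\cap S_{-})$ is extended to all of $\P_{\M[P,Q]}$ by setting it to $0$ on the vertices cut off by the earlier split, after choosing the zero side $S'_{-}$ so that $S_{-}\subseteq S'_{-}$. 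For compatibility, the paper (Lemma \ref{lem:split_compatible}) argues that since the later split hyperplane $H_{S_2}$ lies inside the truncated polytope left by $S_1$, the two hyperplanes cannot meet in the interior of $\P_{\M[P,Q]}$, which is exactly the definition of compatible splits. So your instinct that this is ``where the special combinatorics of lattice path matroids enters'' is right, and an examiner would want you to supply these two arguments rather than assume them. Two smaller points: your final weight $w(B)=\sum_j|\ell_j(e_B)|$ is the special case (all coefficients equal to $1$) of what the paper gets from the Split Decomposition Theorem of Herrmann--Joswig, which yields $w=\sum_{S'}\alpha^{w}_{w_{S'}}w_{S'}$ with coherency indices as coefficients; citing that theorem spares you from having to verify directly that the domains of linearity of your summed function are exactly the cells of the common refinement, which your sketch asserts but does not check. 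Either route works once compatibility is in hand.
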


Not only we are able to establish regularity for LPM subdivisions but we also show that they are obtained as common refinements of split subdivisions, which allows much more structure to these subdivisions. We introduce the notion of LPMfan
as the polyhedral fan that corresponds to LPM subdivisions. We discuss the relation of LPMfan to various well-known polyhedral fan structures which correspond to regular matroidal subdivisions, namely \emph{tropical Grassmannian} and \emph{Dressian}. Since LPM are positroids as well, this discussion can also be connected to the positive part of the tropical Grassmannian and Dressian. We furnish computational examples for both LPM subdivisions and LPMfans for the underlying hypersimplex $\Delta(k,n)$ which is an LPM polytope, where $k=3,4$ and $n=6,8$ respectively.

Postnikov \cite{https://doi.org/10.48550/arxiv.math/0609764, postnikov2018positive} led the study on the stratification of the positive Grassmannian into cells that enjoy equivalences with various combinatorial objects, like \emph{decorated permuations}, \emph{reduced plabic graphs}, etc. We also put our results into perspective by discussing how our LPM subdivisions correspond to these combinatorial objects. This also helps us in bringing the connections to the geometric object \emph{amplituhedron}, introduced first by Arkani et. al \cite{arkani2014amplituhedron} to study problems concerning scattering amplitudes in high energy physics. We point the reader to \cite{arkani2016grassmannian} for exploring the connections between scattering amplitudes in physics and the geometry of the Grassmannian in full detail. Our discussion mostly revolves around the connections between positive Grassmannian, positive tropical Grassmannian and the amplituhedron.

Firstly, for the $m=2$ amplituhedron, we provide a purely matroidal treatment to the definition of BCFW\footnote{the abbreviation is after the names of Physicists Britto, Cachazo, Feng, and Witten} style recurrence relations for positroid dissections of the hypersimplex in the form of Theorem \ref{thm:BCFW_matroidal}. These positroidal dissections were introduced in \cite{m=2amplut} and it is shown in \cite{m=2amplut} that via \emph{T-duality} they are also related to certain dissections of the $m=2$ amplituhedron $\mathcal{A}_{n,k,2}$. Secondly, for the $m=4$ amplituhedron, in \cite{karp2020decompositions} it is shown that BCFW cells of the amplituhedron correspond to a \emph{noncrossing} lattice paths of a certain lattice rectangle. Additionally, a recent work \cite{even2021amplituhedron} shows that BCFW cells provide a triangulation of the amplituhedron $\mathcal{A}_{n,k,4}$. In light of these results, we prove the following result which is the first result highlighting the relation between the BCFW triangulation of $\mathcal{A}_{n,k,4}$ and positroidal dissection of a certain hypersimplex.

\begin{theorem}
Each triangulation of the amplituhedron $\mathcal{A}_{n,k,4}$ into $(k, n)$-BCFW cells provides a positroid dissection $\{\Gamma_{i}\}$ of the hypersimplex $\Delta(k,n-4)$, where each BCFW cell corresponds to a lattice path matroid polytope $\Gamma_{i}$.     
\end{theorem}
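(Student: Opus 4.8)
The plan is to read a lattice path matroid off each BCFW cell via the combinatorial model of \cite{karp2020decompositions}, and then to argue that the triangulation hypothesis is exactly what makes the resulting LPM polytopes dissect $\Delta(k,n-4)$, the passage from the amplituhedron side to the hypersimplex side being carried by a T-duality--type correspondence. By \cite{karp2020decompositions} the $(k,n)$-BCFW cells of $\mathcal{A}_{n,k,4}$ are indexed by pairs $(P,Q)$ of noncrossing lattice paths in a $k\times(n-4-k)$ lattice rectangle with $P$ weakly below $Q$; to the cell $C$ indexed by $(P,Q)$ I attach the rank-$k$ lattice path matroid $\M[P,Q]$ on $[n-4]$ and the polytope $\Gamma_C:=\P_{\M[P,Q]}\subseteq\Delta(k,n-4)$, which is already the asserted correspondence ``BCFW cell $\leftrightarrow$ LPM polytope''. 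The first point to verify is that $\Gamma_C$ is full-dimensional in $\Delta(k,n-4)$: one checks from the structure of the BCFW recursion that the two paths are internally disjoint (they meet only at the two corners of the rectangle), hence $\M[P,Q]$ is a connected matroid, and the base polytope of a connected matroid on $[n-4]$ has dimension $(n-4)-1=\dim\Delta(k,n-4)$.

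The heart of the argument is transporting the triangulation property across the bridge. At the level of decorated permutations (equivalently plabic graphs), one wants a T-duality map carrying a $(k,n)$-BCFW cell of the positive Grassmannian to a positroid cell of $\Gr_{\ge 0}(k,n-4)$ that is compatible with the path labels, so that the image of $C$ is the positroid cell with matroid $\M[P,Q]$ and image $\Gamma_C$ under the moment map. Granting such a map, a family $\{C_i\}$ that triangulates $\mathcal{A}_{n,k,4}$ --- the amplituhedron maps are injective on the $C_i$, their images have pairwise disjoint interiors, and their union is dense --- is carried to a family of positroid cells of $\Gr_{\ge 0}(k,n-4)$ whose moment-map images $\Gamma_{C_i}$ have pairwise disjoint interiors and cover $\Delta(k,n-4)$, which is precisely the statement that $\{\Gamma_{C_i}\}$ is a dissection of $\Delta(k,n-4)$; positroidality of the pieces is automatic, since lattice path matroids are positroids \cite{oh2011positroids}.

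The main obstacle is setting up this bridge: unlike the $m=2$ situation of \cite{m=2amplut}, where the T-duality map between $\Delta(k+1,n)$ and $\mathcal{A}_{n,k,2}$ and its compatibility with positroidal dissections are available, the $m=4$ version must be established here. I would reduce it to the combinatorics of the path labels: use \cite{even2021amplituhedron} to describe which families of BCFW cells form a triangulation, translate this into a condition on the corresponding families of noncrossing path-pairs, and then show, using the facet description of LPM polytopes together with the hyperplane-split and common-refinement-of-splits structure developed in the earlier sections (the same tool behind the regularity theorem), that this condition forces the $\Gamma_{C_i}$ to cover $\Delta(k,n-4)$ with pairwise disjoint interiors. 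The ``disjoint interiors'' half should follow rather directly from noncrossing-ness and the compatibility of LPM polytopes along hyperplane splits; the ``covering'' half is where the triangulation input of \cite{even2021amplituhedron} is genuinely used, and I would corroborate it with a volume check, comparing $\sum_i\operatorname{vol}(\Gamma_{C_i})$ with $\operatorname{vol}(\Delta(k,n-4))$ by means of the known enumeration of BCFW cells on the amplituhedron side. Assembling these pieces yields the asserted positroid dissection, in which each BCFW cell corresponds to the LPM polytope $\Gamma_i=\Gamma_{C_i}$.
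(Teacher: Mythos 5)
Your construction of the correspondence is exactly the paper's: via \cite{karp2020decompositions}, each $(k,n)$-BCFW cell is indexed by a pair of noncrossing lattice paths in a $k\times(n-k-4)$ rectangle, to which one attaches the lattice path matroid $\M[P,Q]$ and its polytope inside $\Delta(k,n-4)$ (this is Corollary \ref{cor:noncrossing}), and positroidality of the pieces is automatic because LPMs are positroids. Where you diverge is in what you consider the substantive content. The paper's proof stops essentially at the correspondence: it passes from ``triangulation into BCFW cells'' to ``collection of LPM polytopes inside the hypersimplex'' and then declares this collection to be a positroid dissection by appeal to Definition \ref{def:pos_dissetion}, without separately verifying full-dimensionality, pairwise disjointness of interiors, or covering. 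You instead identify those three axioms as the heart of the matter and propose to transport them across an $m=4$ T-duality map, which you candidly note does not yet exist and would have to be constructed. That bridge is not constructed in the paper either; the paper explicitly defers $m=4$ T-duality to future work (see the discussion following Theorem \ref{thm:m=4_aplitu_BCFW_LPM}). So relative to the paper's own argument your proposal proves at least as much, and your connectedness/dimension check and proposed volume count are additions the paper does not make; but as a self-contained proof your write-up is incomplete at precisely the step you flag, namely showing that the triangulation hypothesis forces the $\Gamma_{C_i}$ to cover $\Delta(k,n-4)$ with disjoint interiors. One small point in your favor: you work consistently with $\Delta(k,n-4)$ and rank-$k$ LPMs on $[n-4]$, matching the theorem statement and Corollary \ref{cor:noncrossing}, whereas the paper's proof switches to $\mathcal{U}_{k+4,n}$ and $\Delta(k+4,n)$, which appears to be an inconsistency.
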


Lastly, \cite{arkani2021positive} discusses the relation between positroidal cells of the positive Grassmannian and the positive configuration space, via the Chow quotient of the Grassmanian. We also encounter a special class of LPM's throughout our study, namely \emph{snakes}, which are \emph{minimal}, and we use this property, to provide examples of \emph{clusters} for them, which implies intricate connections between LPM's and the underlying cluster algebra, which we wish to explore further in subsequent work. This minimality of snakes also helps us answer partially  a Question asked in \cite{olarte2019local}. We would like to make a special mention of the various salient features which we encounter for \emph{snakes} and would like to state them as follows,

\emph{Snakes are lattice path matroids, positroids, minimal, binary, indecomposable, series-parallel, graphical \cite{knauer2018tutte}, order, alcoved \footnote{We do acknowledge that order and alcoved are properties satisfied by matroid polytopes of snakes.} \cite{benedetti2023lattice}
}

In Section \ref{sec:Sec2} we introduce all basic definitions which we will use in further discussions. Section \ref{sec:Sec3} introduces the notion of LPM subdivisions and Theorem \ref{thm:LPM_is_regular} is proven here. Section \ref{sec:Sec4} describes the relation between the positive tropical Grassmannian and LPM subdivisions. Section \ref{sec:comp} collects all our computational examples, which are mostly LPM subdivisions and LPMfan for LPM polytopes $\Delta(3,6)$ and $\Delta(4,8)$. Section \ref{sec:Sec6} introduces the notion of amplituhedron and relates in detail the findings pertaining to LPM's. Finally, we discuss probable future problems and open questions in Section \ref{sec:Sec7}.

\section{Preliminaries}\label{sec:Sec2}

We would like to guide the readers unfamiliar with the concepts in this section to \cite{https://doi.org/10.48550/arxiv.math/0609764, postnikov2018positive} and \cite{maclagan2021introduction} for further details. A \emph{matroid} of rank $k$ on the set $[n] := \{1,2, \ldots, n\}$ is a nonempty collection $\M \subseteq \binom{[n]}{k}$ of $k$-element subsets of $[ n ]$, called \emph{bases} of $\M$, that satisﬁes the exchange axiom: 
For any $I , J \in \M$ and $i \in I$, there exists $j \in J$ such that $I \setminus \{ i \} \cup \{ j \} \in \M$.

A matroid is called \emph{realizable} if it can be represented by elements of a matrix over some field $\mathbb{K}$. A \emph{positroid} of rank $k$ is a matroid that can be represented by a $k \times n$-matrix with non-negative maximal
minors.

The \emph{Grassmannian} $\Gr(k,n)$ is the parameterization of the family of all $k$-dimensional subspaces of $n$-dimensional vector space in $\mathbb{K}^{n}$. It also possesses a smooth projective variety structure, corresponding to the vanishing set of the \emph{Pl\"ucker ideal} $\mathcal{I}_{k,n}$. 

An element in the Grassmannian $\Gr(k,n)$ can be understood as a collection of $n$ vectors $v_{1}, \hdots, v_{n} \in \mathbb{K}^{k}$ spanning the space $\mathbb{K}^{k}$ modulo the simultaneous action of $\GL(k,n)$ on the vectors, where the vectors $v_{i}$ are
the columns of a $k \times n$-matrix $A$. Then an element
$V \in \Gr(k,n)$ represented by $A$ gives the matroid $\M_{V}$ whose bases are the $k$-subsets $I \subset [n]$ such that $\det_{I}(A) \neq 0$. Here, $\det_{I}(A)$ denotes the determinant of $A_{I}$, the $k \times k$ submatrix of $A$ with the column set $I$.

An element $V \in \Gr(k,n)$ is termed as \emph{totally non-negative} if $\det_{I}(V) \geq 0$, for all $I \in \binom{[n]}{k}$. The set of all totally non-negative $V \in \Gr(k,n)$ is the \emph{totally non-negative Grassmannian} $\Gr^{\geq 0}(k,n)$; abusing notation, we refer to $\Gr^{\geq 0}(k,n)$ as the \emph{positive
Grassmannian} \cite{m=2amplut}.

Tropical geometry is the study of polynomials over the tropical semiring $\mathbb{T} = \{ \mathbb{R} \cup \infty, \text{max}, + \}$. Given $e = (e_{1} , \hdots , e_{N} ) \in \mathbb{Z}^{N}_{\geq 0}$ , we let $x^{e}$ denote $x_{1}^{e_{1}} \hdots x_{N}^{e_{N}}$. For a polynomial $f = \sum_{e \in E} a_{e}x^{e}$, we firstly associate a corresponding tropical polynomial $f$ where the binary operations are replaced by tropical addition and multiplication respectively, and we denote by $\text{Trop}(f)$ the \emph{tropical hypersurface} associated to $f$ which is the collection of all points where the maxima is achieved at least twice. Let $E = E^{+}  \cup  E^{-} \subseteq \mathbb{Z}^{N}_{\geq 0}$ , and let $f$ be a nonzero polynomial with real coefficients such that $f = \sum_{e \in E^{+}}a_{e}x^{e} - \sum_{e \in E^{-}}a_{e}x^{e} $, where all of the coefficients $a_{e}$ are non-negative real numbers. Then $\Trop^{+}(f)$ denotes the \emph{positive part} of $\Trop(f)$, and the set of all points $(x_{1} , \hdots , x_{N})$ such that, if we form the collection of numbers $\sum e_{i}x^{i}$ for $e$ ranging over
$E$, then the minimum of this collection is not unique and furthermore is achieved for some $e \in E^{+}$ and some $e \in E^{-}$ \cite{m=2amplut}.

The \emph{tropical Grassmannian} $\TGr(k,n)$ is the intersection of the tropical hypersurfaces $\Trop(f)$, where $f$ ranges over all elements of the \emph{Pl\"ucker ideal} $\mathcal{I}_{k,n}$ which is generated by the \emph{quadratic Pl\"ucker relations} \cite{maclagan2021introduction}. The \emph{Dressian} $\Dr(k,n)$ is the intersection of the tropical hypersurfaces $\Trop(f)$, where $f$ ranges over all three-term Pl\"ucker relations. Similarly, the \emph{positive tropical Grassmannian} $\Trop^{+}\Gr(k,n)$ is the intersection of the positive tropical hypersurfaces $\Trop^{+}(f)$, where $f$ ranges over all elements of the Pl\"ucker ideal. The \emph{positive Dressian} $\Dr^{+}(k,n)$ is the intersection of the positive tropical hypersurfaces $\Trop^{+}(f)$,
where $f$ ranges over all three-term Pl\"ucker relations. The underlying matroid for the definitions of the tropical Grassmannian and Dressian is the \emph{uniform matroid} $\U_{k,n}$. However, the notion of Dressian can be extended to arbitrary matroids with the definition of a \emph{local Dressian}. The \emph{local Dressian} $\Dr(\M)$ is defined as the tropical pre-variety  given by the set of quadrics obtained from the three-term 
Pl\"ucker relations by setting the variables $p_{B}$ to zero, where $B$ is not a basis of $\M$ \cite{olarte2019local}.

A subdivision $\Sigma$ of a polytope $P$ in $\mathbb{R}^{d}$ is said to be \emph{regular} if there exits a weight vector $w$ such that if the vertices of $P$ are lifted to heights provided by $w$ in $\mathbb{R}^{d+1}$ and subsequently the lower convex hull is projected back to $\mathbb{R}^{d}$, then the subdivision $\Sigma$ is retrieved. A tropical polynomial with Newton polytope $P$ defines a \emph{tropical hypersurface} that is dual to a regular subdivision of $P$. We point the reader to \cite[Chapter 1,3]{maclagan2021introduction}, \cite[Chapter 1]{joswig2021essentials} for further details about this duality. 

We recall details about a special class of subdivisions that appear in our work. A \emph{split} subdivision is a subdivision with exactly two maximal cells \cite{herrmannsplitting}. Two splits $S_{1}$ and $S_{2}$ are said to be \emph{compatible} if the hyperplane along the split edges do not intersect in the interior of the polytope. 

We now introduce definitions dealing with lattice path matroids. Let $E$ be a set (which is going to be the ground set of the matroid), and let $\mathcal{A}  =  (A_{j} : j \in J )$
be a set system over $E$, that is, a multiset of subsets of a ﬁnite set $S$. A \emph{transversal} of $\mathcal{A}$ is a set $\{ x_{j} : j \in J  \}$ of  $|J|$ distinct elements such that
$x_{j} \in  A_{j}$ for all $j \in J$. A \emph{partial transversal} of $\mathcal{A}$ is a transversal of a set system of the form ($A_{k} : k \in K) $ with $K$ a subset of $J$. A \emph{transversal matroid} is a matroid whose independent sets are the partial transversals of some set system $\mathcal{A} = (A_{j} : j \in J)$ and $\mathcal{A}$ is called the \emph{presentation} of the transversal matroid. We denote this matroid by $\M[\mathcal{A}]$. The bases of a transversal matroid are the maximal partial transversals of $\mathcal{A}$ \cite{bonin2003lattice}.

We now recall the definition of a lattice path matroid as a certain kind of transversal matroid \cite[Definition 3.1]{bonin2003lattice}. Consider an $r\times (n-r)$ rectangular lattice grid $\U_{r,n}$. This consists of all the lattice points $\bigg\{ (a, b) : 0 \leq a \leq n-r~,~ 0 \leq b \leq r\bigg\}$, and all the edges between neighboring lattice points. This can also be thought as a Young diagram \cite{fulton1997young} consisting of $r\cdot (n-r)$ unit squares of the partition $\lambda = (\underbrace{n-r, n-r, \cdots, n-r}_{r})$. An NE-\emph{path} over $\U_{r, n}$ is a path from the point $(0,0)$ to the point $(n-r, r)$ each of whose step is either a step in $(1,0)$ direction (i.e. an $E$-step) or a step in $(0,1)$ direction (i.e. an $N$-step). Note that for each edge in $\U_{r,n}$, its position in any NE-path is the same. Hence we can denote it by this position. Using this observation, we can denote each NE-path by the sequence of its north steps.

\begin{definition}\label{def:LPM}
Let $P$ and $Q$ be two NE-paths on $\U_{r, n}$ denoted by
\begin{align*}
    P = p_{1}p_2\ldots p_r \\
    Q = q_1 q_2 \ldots q_r
\end{align*}
then the set of all NE-paths between $P$ and $Q$ forms a matroid. That is,
\begin{align}
    \M[P,Q] &= \bigg\{\{i_1, i_2, \ldots, i_r\}: p_j \leq i_j \leq q_j~\text{for}~j=1,\ldots, r \bigg\}
\end{align}
Sometimes, we denote the matroid $\M[P,Q]$ by just $\M[J]$ where $J$ is the skew Young diagram bounded by $P$ and $Q$.
\end{definition}

% \begin{definition}[Bonin et. al]\label{def:LPM}
% Let $P= p_{1}, \hdots, p_{k+n}$ and $Q = q_{1}, \hdots, q_{k+n}$ be two North-East lattice paths from the point $(0,0)$ to the point $(k,n)$ with $P$ never going above $Q$. Let $\{p_{u_{1}} \hdots, p_{u_{n}} \}$ be the set of North steps of $P$, with $u_{1} < \hdots < u_{r}$; similarly, let $\{q_{l_{1}} \hdots, q_{l_{n}} \}$ be the set of North steps of $Q$, with $l_{1} < \hdots < l_{r}$. Let $N_{i}$ be the interval $[l_{i},u_{i}]$ of integers. Let $\mathcal{M}[P,Q]$ be the transversal matroid that has ground set $[k+n]$ and presentation $(N_{i}: i \in [n]);$ the pair $(P,Q)$ is a \emph{lattice path presentation} of $\mathcal{M}[P,Q]$. A \emph{lattice path matroid} is a matroid $M$ that is isomorphic to $\mathcal{M}[P,Q]$ for some such pair of path $P$ and $Q$.
% \end{definition}

An example of a lattice path matroid is depicted in Figure \ref{fig:LPM_example}, where the edges in the North direction are marked with their respective indices.

\begin{figure}[H]
    \centering
    \includegraphics[scale=0.5]{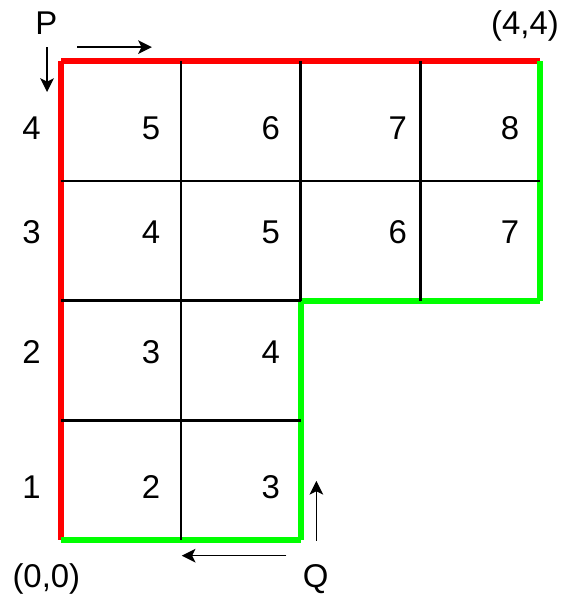}
    \caption{The lattice path in red depicts the path $P$ and the lattice path in green depicts the path $Q$, for the lattice path matroid $\M[P,Q]$. }
    \label{fig:LPM_example}
\end{figure}

\section{LPM Subdivisions}\label{sec:Sec3}

We use the following definition from \cite{knauer2018tutte},

\begin{definition}
We call a lattice path matroid $\M[P,Q]$ a \emph{snake} if it has at least two elements, it is connected and the strip contained between the paths $P$ and $Q$ does not contain any interior lattice point.   
\end{definition}

Snakes are also referred to as \emph{border strip} matroids. Snakes have the minimal number of bases a rank $r$ connected matroid over $n$ elements has. That is why they are also called \emph{minimal} matroids \cite{ferroni2022ehrhart}. In contrast to this, uniform matroids are \emph{maximal} with respect to this property.

We introduce a new class of subdivision as follows:
\begin{definition}
Let $\M[P,Q]$ be a lattice path matroid and $\P_{\M[P,Q]}$ be its matroid polytope.
A subdivision $\Sigma$ of  $\P_{\M[P,Q]}$ is called a \emph{lattice path matroidal} (LPM) subdivision if all maximal cells of $\Sigma$ are lattice path matroid polytopes.
\end{definition}

In \cite{chatelain2011matroid, ferroni2022valuative}, matroid base polytope decompositions are studied in detail and it is shown that for a lattice path matroid $\M[P,Q]$ which is not a \emph{snake}, its matroid polytope $\P_{\M[P,Q]}$ admits a decomposition into lattice path matroids polytopes s.t 
\begin{align}
\P_{\M[P,Q]} = \bigcup_{i=1}^{t} \P_{\M[P_i,Q_i]} 
\end{align}

where each $\P_{\M[P_{i},Q_{i}]}$ is also a lattice path matroid base polytope for some lattice path matroid $M[P_{i},Q_{i}]$, and for each $1 \leq i \neq j \leq t$,
the intersection $\P_{\M[P_{i},Q_{i}]} \cap \P_{\M[P_{j},Q_{j}]}$ is a face of both $\P_{\M[P_{i},Q_{i}]}$ and $\P_{\M[P_{j},Q_{j}]}$. A \emph{hyperplane LPM split} decomposition is a decomposition in exactly two lattice path matroid polytopes, i.e., $t=2$ and as a consequence of \cite[Corollary 1]{chatelain2011matroid} we also know that these two LPM polytopes are full-dimensional.

We feel that it is a good time to recall the notion of a polytopal subdivision \cite[Section 1.2]{joswig2021essentials}, 

\begin{definition}
For a polytope  $P \in \mathbb{R}^{d}$, a \emph{(polyhedral) subdivision} $\Sigma$ is a polytopal complex whose vertices are the vertices of $P$, and that covers $P$. $\Sigma$ can be understood as a collection of faces $F$, such that for any two faces $F_{i}$ and $F_{j}$, $F_{i} \cap F_{j} \in \Sigma$.      
\end{definition}

It is pretty obvious from the definition above that the notions of LPM decompositions and LPM subdivisions coincide and we state this in the form of Corollary \ref{cor:subdiv_eq_decomp} 

\begin{corollary}\label{cor:subdiv_eq_decomp}
Let $\Sigma' = (\P_{\M[P_{1},Q_{1}])}, \hdots \P_{\M[P_t,Q_t]})$ be a decomposition of $\P_{\M[P,Q]}$ into lattice path matroid polytopes. Then $\Sigma'$ coincides with the subdivision $\Sigma$ of $\P_{\M[P,Q]}$, where each maximal cell is $C_{i} =  \P_{\M[P_{i},Q_{i}]}$.   
\end{corollary}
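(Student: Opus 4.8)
The plan is to write the subdivision $\Sigma$ down by hand and check it against the definition of a polyhedral subdivision recalled above; there is no deep obstacle, the statement is essentially a matching of axioms. I would take $\Sigma$ to be the collection of all faces of all the polytopes $C_i := \P_{\M[P_i,Q_i]}$, $i=1,\dots,t$, and then verify three things: (1) every vertex of every $C_i$ is a vertex of $\P_{\M[P,Q]}$; (2) the cells of $\Sigma$ cover $\P_{\M[P,Q]}$; and (3) $\Sigma$ is a polytopal complex, i.e.\ the intersection of any two of its cells is a common face of both. Once these hold, $\Sigma$ is a polyhedral subdivision of $\P_{\M[P,Q]}$; and since each $C_i$ is full-dimensional --- this is built into the notion of a matroid base polytope decomposition, and for the hyperplane $\LPM$ splits out of which such decompositions are iteratively assembled it is \cite[Corollary 1]{chatelain2011matroid} --- no $C_i$ is a proper face of a cell of $\Sigma$, so the $C_i$ are exactly the maximal cells of $\Sigma$. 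That is the content of the claim that $\Sigma$ coincides with $\Sigma'$.

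Conditions (1) and (2) should be quick. Condition (2) is literally the decomposition identity $\P_{\M[P,Q]} = \bigcup_{i=1}^{t} C_i$. For (1), I would use that a matroid base polytope lies inside the unit cube and that its vertices are precisely the $0/1$ indicator vectors of its bases: since $C_i \subseteq \bigcup_j C_j = \P_{\M[P,Q]} \subseteq [0,1]^{n}$, any vertex of $C_i$ is a $0/1$ point of $\P_{\M[P,Q]}$, hence an extreme point of the ambient cube that happens to lie in $\P_{\M[P,Q]}$, hence a vertex of $\P_{\M[P,Q]}$. Equivalently, every basis of $\M[P_i,Q_i]$ is a basis of $\M[P,Q]$, so each $C_i$ is a subpolytope on a subset of the vertices.

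The one step that is not pure bookkeeping is (3), and this is where I expect the only real work to go. Let $F$ be a face of $C_i$ and $G$ a face of $C_j$. If $i=j$ there is nothing to do, since $F\cap G$ is then a face of $C_i$. If $i\neq j$, set $H := C_i\cap C_j$; by the defining property of an $\LPM$ decomposition, $H$ is a face of $C_i$ and a face of $C_j$. Then $F\cap H$ is a face of $C_i$ contained in $H$, hence a face of the polytope $H$; likewise $G\cap H$ is a face of $H$; therefore
\[
F\cap G \;=\; (F\cap H)\cap(G\cap H)
\]
is an intersection of two faces of the polytope $H$, hence a face of $H$, hence (a face of a face being a face) a face of both $C_i$ and $C_j$. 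In particular $F\cap G$ is a face of $F$, a face of $G$, and an element of $\Sigma$ --- exactly what (3) asks. As a side remark, applying this with $F=C_i$ and $G=C_j$ shows that two distinct maximal cells meet in a proper, hence lower-dimensional, common face, so their relative interiors are disjoint and $\Sigma$ is genuinely a subdivision rather than merely a covering. Beyond this, all that remains is to spell out the routine identifications, which I would not dwell on.
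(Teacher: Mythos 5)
Your verification is correct and matches the paper's (implicit) approach: the paper offers no proof at all, declaring the equivalence ``pretty obvious from the definition,'' and your write-up supplies exactly the routine covering and face-to-face checks that this assertion relies on, with the key step (3) argued correctly via the common face $H=C_i\cap C_j$. The only point worth adding is the converse of your step (1) --- that every vertex of $\P_{\M[P,Q]}$ is a vertex of some $C_i$, which the paper's definition of a subdivision (``whose vertices are the vertices of $P$'') also requires --- but this is immediate from the covering property together with the fact that an extreme point of $\P_{\M[P,Q]}$ lying in $C_i\subseteq\P_{\M[P,Q]}$ is an extreme point of $C_i$.
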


The LPM subdivision corresponding to a hyperplane LPM split decomposition is called a split subdivision. The subsequent subdivision is obtained iteratively via split subdivisions which correspond to hyperplane LPM split decomposition. We take this opportunity to specify our terminology so as to minimize any confusion in the text; split with the prefix 'hyperplane' would always refer to the LPM subdivision of a lattice path matroid into two LPM, whereas split with the suffix 'hyperplane' would refer to the hyperplane defining a split subdivision.    

From now on our discussion would mostly focus on the LPM subdivisions, however, because of the equivalence in Corollary \ref{cor:subdiv_eq_decomp} most of our results also extend to LPM decompositions, unless otherwise stated. 

\begin{remark}  
The property of being obtained via iterative hyperplane LPM split decompositions  is unique to the LPM decompositions described in \cite{chatelain2011matroid, bidkhori2012lattice} and is different in this aspect from the concept of matroid decompositions defined in \cite{billera2009quasisymmetric} which define a new quasisymmetric invariant for matroids which acts as a valuation on decompositions of matroid polytopes. Although, Kapranov \cite{kapranov1992chow} showed that for rank 2 matroids such matroid decompositions can be obtained via hyperplane split decompositions.
\end{remark}

We recall first this technical result regarding split subdivisions,

\begin{lemma}[Lemma 3.5 \cite{herrmannsplitting}]\label{lem:split_regular}
Split subdivisions are regular.    
\end{lemma}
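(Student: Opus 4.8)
The plan is to exhibit an explicit weight vector inducing a given split subdivision, following the standard idea that a single hyperplane cutting a polytope into two pieces is the projection of a "bent" piecewise-linear function. Let $\Sigma = \{C_1, C_2\}$ be a split subdivision of a polytope $P \subseteq \mathbb{R}^d$, and let $H = \{x \in \mathbb{R}^d : \langle a, x\rangle = b\}$ be the split hyperplane, so that $C_1 = P \cap \{\langle a,x\rangle \le b\}$ and $C_2 = P \cap \{\langle a,x\rangle \ge b\}$, and $C_1 \cap C_2 = P \cap H$ is a face of each. The first step is to define the function $w \colon \mathrm{vert}(P) \to \mathbb{R}$ by $w(v) = |\langle a, v\rangle - b|$, equivalently $w(v) = \max\{\langle a,v\rangle - b,\ b - \langle a,v\rangle\}$ on the vertices, and then check that lifting the vertices of $P$ to heights $w(v)$ and taking the lower convex hull reproduces exactly $\Sigma$.

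The key steps, in order, are as follows. First I would observe that the lifted point set $\{(v, w(v))\}$ lies in the union of the two hyperplanes in $\mathbb{R}^{d+1}$ given by $z = \langle a,x\rangle - b$ and $z = b - \langle a,x\rangle$; each vertex $v$ lies on (at least) one of these two hyperplanes depending on the sign of $\langle a,v\rangle - b$, and vertices on $H$ lie on both. Second, I would verify that the lower convex hull of the lifted vertices is precisely the graph of the function $x \mapsto |\langle a,x\rangle - b|$ restricted to $P$: this is a convex, piecewise-linear function with exactly two linear domains, namely $P \cap \{\langle a,x\rangle \le b\}$ and $P \cap \{\langle a,x\rangle \ge b\}$, since both linear forms $\pm(\langle a,x\rangle - b)$ are bounded above on $P$ by this max and agree with it on the respective halfspaces. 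Third, projecting this lower hull back to $\mathbb{R}^d$ yields the two cells $C_1$ and $C_2$, which is exactly $\Sigma$; hence $\Sigma$ is regular with witness weight $w$.

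A small technical point I would address is that the vertices of $P$ must be the vertices of $\Sigma$ (true by the definition of subdivision recalled in the excerpt) and that the induced subdivision has no extra cells — this follows because the two linear pieces of $|\langle a,x\rangle - b|$ are distinct as long as $H$ actually meets the interior of $P$, which is part of the data of a nontrivial split. The main (and only mild) obstacle is bookkeeping: making sure the ``lower'' convex hull convention matches the sign conventions in the paper, and confirming that the split face $P \cap H$ is recovered as the common face of $C_1, C_2$ rather than being subdivided further — but since $|\langle a,x\rangle - b|$ is affine on all of $H$, no spurious subdivision of the shared face occurs. This is essentially the content of \cite[Lemma 3.5]{herrmannsplitting}, and the argument above is the natural way to see it.
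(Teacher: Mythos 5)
Your proof is correct and follows essentially the same route as the paper: exhibit an explicit piecewise-linear convex lifting function bent along the split hyperplane and check that its two domains of linearity are the two cells. The only cosmetic difference is that you use the symmetric tent function $|\langle a,x\rangle - b|$ while the paper uses the one-sided version (zero on $S_-$, positive on $S_+$); these differ by a global affine function and hence induce the same regular subdivision, and your write-up actually supplies the verification that the paper dismisses as ``clear.''
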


\begin{proof}
Let $S$ be a split subdivision of a polytope $P$. We provide a canonical weight vector for this subdivision in the following way. Let $a$ be the normal vector to the split hyperplane $H_{S}$. We define the weight vector for $S$ as $w_{S}: \text{Vert}(P) \rightarrow \mathbb{R}$ such that 
\begin{align*}
    w_{S}(v) = \begin{cases}
    |av| \quad \text{if} \> v \in S_{+} \\ 
    0 \quad \quad \text{if} \> v \in S_{-}
    \end{cases}
\end{align*}

It is clear that this weight function is well-defined and induces the split subdivision $S$.
\end{proof}

We now state a technical result concerning split LPM subdivisions. We call an LPM polytope $\P_{\M[J]} \subseteq \P_{\M[P,Q]}$ a \emph{truncated} LPM polytope if $\P_{\M[J]}  = \P_{\M[P,Q]} \setminus (\P_{\M[P,Q]} \cap H_{-})$, where $H_{-}$ is a halfspace defined by the split hyperplane $H$ of a split subdivision (cf. Figure \ref{fig:trunc}).

\begin{lemma}\label{lem:split_ext}
A split subdivision of a truncated LPM polytope $\P_{\M[J]}$ into two LPM can be extended to a split subdivision of the LPM polytope $\P_{\M[P,Q]}$ into two LPM.      
\end{lemma}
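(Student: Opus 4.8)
The plan is to reduce the claim to understanding how the split hyperplane sits with respect to the full LPM polytope $\P_{\M[P,Q]}$, and to show that the same linear functional that cuts the truncated polytope $\P_{\M[J]}$ into two LPM polytopes also cuts $\P_{\M[P,Q]}$ into two LPM polytopes. First I would set up the notation: write $\P_{\M[J]} = \P_{\M[P,Q]} \setminus (\P_{\M[P,Q]} \cap H_-)$ as in the definition, and let the given split subdivision of $\P_{\M[J]}$ be induced by a hyperplane $H'$ with halfspaces $H'_+, H'_-$, so that $\P_{\M[J]} \cap H'_+$ and $\P_{\M[J]} \cap H'_-$ are both full-dimensional LPM polytopes, say $\P_{\M[J_1]}$ and $\P_{\M[J_2]}$. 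The key observation is that a hyperplane LPM split of an LPM polytope is governed combinatorially by the skew shape: by the results of \cite{chatelain2011matroid} a hyperplane split of $\P_{\M[P,Q]}$ into two LPM corresponds to choosing an interior lattice point of the skew diagram $J$ and cutting the diagram along the two ``staircase'' paths through that point (equivalently, a split hyperplane of the matroid polytope arises from a partition of the ground set respecting the lattice-path structure). So I would translate both $H$ (which truncates $\P_{\M[P,Q]}$ to $\P_{\M[J]}$) and $H'$ (which splits $\P_{\M[J]}$) into this combinatorial language of paths through interior lattice points of the relevant skew diagrams.

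The heart of the argument is then the following: the split hyperplane $H'$ of the truncated polytope $\P_{\M[J]}$ is defined by some linear equation on $\RR^n$, and because $\P_{\M[J]}$ and $\P_{\M[P,Q]}$ lie in the same affine hyperplane $\{\sum x_i = r\}$ and share all vertices outside $\P_{\M[P,Q]} \cap H_-$, the same equation defines a hyperplane $\widehat{H}$ of $\P_{\M[P,Q]}$. I would argue that $\widehat{H}$ splits $\P_{\M[P,Q]}$: its two sides are $\P_{\M[P,Q]} \cap \widehat{H}_+$ and $\P_{\M[P,Q]} \cap \widehat{H}_-$, and I need to check (a) each side is again an LPM polytope, and (b) their intersection is a common facet, i.e. $\widehat{H}$ actually meets the interior of $\P_{\M[P,Q]}$. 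For (b): since $H'$ meets the interior of $\P_{\M[J]}$ and $\P_{\M[J]} \subseteq \P_{\M[P,Q]}$ is full-dimensional, $\widehat{H} = H'$ meets the interior of $\P_{\M[P,Q]}$ as well. For (a): the corresponding cut on the skew diagram $J' \supseteq J$ — where $J'$ is the skew diagram of $\M[P,Q]$ — passes through an interior lattice point of $J$, which is also an interior lattice point of the larger diagram $J'$ (interior points of a sub-skew-shape remain interior in the larger one, provided the point is not on the boundary of $J'$; one must check the truncation $H$ does not push the cut point onto $\partial J'$, which holds because the cut point lies in the interior of $\P_{\M[J]}$ and the truncating facet is $H$, distinct from $H'$). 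Hence the cut along this interior point of $J'$ produces, via \cite[Corollary 1]{chatelain2011matroid}, two full-dimensional LPM polytopes whose union is $\P_{\M[P,Q]}$ and whose intersection is a common facet.

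The main obstacle I expect is the bookkeeping in step (a): making precise the correspondence between ``split hyperplanes of an LPM polytope'' and ``interior lattice points / admissible cuts of the skew diagram,'' and verifying that an interior lattice point witnessing the split of the truncated polytope $\P_{\M[J]}$ remains an interior lattice point of the full diagram $J'$ rather than landing on its boundary. This is essentially a compatibility statement between the truncating hyperplane $H$ and the splitting hyperplane $H'$: if the cut induced by $H'$ degenerated at the level of $\P_{\M[P,Q]}$, it would have to do so along the facet $H$, but $H'$ by hypothesis is a genuine split hyperplane of $\P_{\M[J]}$ distinct from the facet-defining hyperplane $H$, so the degeneration cannot occur. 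I would close the proof by invoking Lemma \ref{lem:split_regular} (or rather just the combinatorial description of hyperplane LPM splits) to conclude that the extended subdivision of $\P_{\M[P,Q]}$ is indeed a split subdivision into two LPM polytopes, as required. A picture analogous to Figure \ref{fig:trunc}, showing $J \subseteq J'$ with the common cut through an interior lattice point, would make the argument transparent.
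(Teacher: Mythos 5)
Your proposal is correct in substance but takes a genuinely different route from the paper. The paper's proof stays entirely in the language of lifting functions: it takes the canonical weight vector $w_{S'}$ that induces the split $S'$ on the truncated polytope $\P_{\M[J]}$ (via Lemma \ref{lem:split_regular}), chooses the orientation so that $S_{-}\subseteq S'_{-}$, and extends $w_{S'}$ by zero on the vertices of $\P_{\M[P,Q]}\cap S_{-}$ that were removed by the truncation; the extended weight is then asserted to induce the desired split of the full polytope. You instead work directly with the splitting hyperplane $H'$, observe that the same linear functional cuts $\P_{\M[P,Q]}$, and verify combinatorially---via the correspondence between hyperplane LPM splits and cuts through interior lattice points of the skew diagram---that both halves of the full polytope are again full-dimensional LPM polytopes because an interior lattice point of the truncated diagram $J$ remains interior in the larger diagram. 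The two arguments hinge on the same geometric fact (the hyperplane $H_{S'}$ extends to a split hyperplane of $\P_{\M[P,Q]}$ whose two sides are LPM polytopes), but the paper leaves that fact implicit once the extended weight vector is written down, whereas you supply the verification; in that sense your route is more explicit about the one point that actually needs checking, at the cost of the extra bookkeeping you flag about translating hyperplanes into cuts of skew shapes. Either approach is acceptable, and your closing remark that the degeneration could only occur along the truncating facet $H$, which is distinct from $H'$, is exactly the right way to discharge the remaining worry.
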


\begin{proof}
 We consider  a split $S$ of the LPM polytope $\P_{\M[P,Q]}$. By Lemma \ref{lem:split_regular}  we know there exists a weight vector $w_{S}$ of the form 
\begin{align*}
    w_{S}(v) &= 
    \begin{cases}
        |av| \quad \text{if} \> v \in S_{+} \\ 
        0 \quad \quad \text{if} \> v \in S_{-}
    \end{cases}
\end{align*}
where $a$ is the normal vector to the split hyperplane $H_{S}$.
Similarly, let us consider a split $S'$ of the truncated LPM polytope 
$\P_{\M[J]}$. Again by Lemma \ref{lem:split_regular} we know that restricted to $\P_{\M[J]}$ there exists a weight vector $w_{S'}$ of the form 
\begin{align*}
    w_{S'}(v) &=
    \begin{cases}
    |bv| \quad \text{if} \> v \in S'_{+} \\ 
    0 \quad \quad \text{if} \> v \in S'_{-}
    \end{cases}
\end{align*}
where $b$ is the normal vector the split hyperplane $H_{S'}$ and we choose $S'_{-}$ such that $S_{-} \subseteq S'_{-}$. Now we notice that there exists an extension of the weight vector $w_{S'}$ to $w'_{S'}$ which is defined as follows 
\begin{align*}
    w'_{S'}(v) &=
   \begin{cases}
  w_{S'}(v) \quad &\text{if} \> v \in \P_{\M[J]} \\ 
  0 \quad \quad &\text{if} \> v \in \P_{\M[P,Q]} \cap S_{-}
  \end{cases}
\end{align*}
\end{proof}

\begin{lemma}\label{lem:split_compatible}
For an LPM polytope $\P_{\M[P,Q]}$, the split subdivisions induced from a hyperplane split decomposition are compatible.    
\end{lemma}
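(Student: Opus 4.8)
The plan is to first identify exactly which hyperplanes can occur as split hyperplanes of hyperplane LPM splits, then to rephrase compatibility as a combinatorial condition on the lattice points that drive the splits, and finally to verify that condition using the fact that every intermediate stage of the decomposition is itself an honest subdivision.

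First I would record the shape of a split hyperplane. A hyperplane LPM split of an LPM polytope $\P_{\M[J']}$ is governed by an interior lattice point $v=(a,b)$ of its skew diagram $J'$: one maximal cell is the LPM polytope spanned by the NE-paths of $\M[J']$ lying weakly below-right of $v$, the other by those lying weakly above-left of $v$, and the two cells meet along the LPM polytope of the paths through $v$. Since an NE-path with basis $B$ passes through $v$ precisely when $|B\cap\{1,\dots,a+b\}|=b$, the hyperplane of the split, inside the affine span of the ambient hypersimplex, is the ``initial-segment'' hyperplane $H_v=\{\,x : x_1+\cdots+x_{a+b}=b\,\}$. As every cell arising in an iterative LPM decomposition is again an LPM polytope on the ground set $[n]$ whose diagram is a sub-diagram of $J$, every split occurring anywhere in the decomposition has a hyperplane of the form $\{x_1+\cdots+x_m=\mu\}$; and by Lemma~\ref{lem:split_ext} such a split of an intermediate cell extends to a split of $\P_{\M[P,Q]}$ along that same hyperplane, as one reads off the weight vector built in that proof. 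So it is enough to show that any two initial-segment hyperplanes produced by the decomposition are compatible.

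Next I would reduce to a numeric criterion. Let $H_1=\{x_1+\cdots+x_{m_1}=\mu_1\}$ and $H_2=\{x_1+\cdots+x_{m_2}=\mu_2\}$ be two of these hyperplanes with $m_1\le m_2$. If $m_1=m_2$ they are parallel, hence compatible, so assume $m_1<m_2$; on $H_1\cap H_2$ one has $x_{m_1+1}+\cdots+x_{m_2}=\mu_2-\mu_1$. The key assertion, which I would prove next, is that the interior lattice points $v_1=(m_1-\mu_1,\mu_1)$ and $v_2=(m_2-\mu_2,\mu_2)$ driving the splits are never \emph{strictly} comparable in the coordinatewise order, equivalently that $\mu_2\le\mu_1$ or $\mu_2-\mu_1\ge m_2-m_1$. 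Granting this, in the first case every point of $H_1\cap H_2\cap\P_{\M[P,Q]}$ has $x_{m_1+1}+\cdots+x_{m_2}\le 0$, forcing those coordinates to vanish, while in the second it has $x_{m_1+1}+\cdots+x_{m_2}\ge m_2-m_1$, forcing them all to equal $1$; either way such a point lies on a proper face of $\P_{\M[P,Q]}$, so $H_1$ and $H_2$ do not meet in its interior and are compatible.

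The remaining and, I expect, genuinely hard step is to prove the incomparability of the lattice points. I would induct on the number of splits carried out. Once the split at a point $v'$ has been performed, its hyperplane $H_{v'}$ is part of a wall of the current subdivision; since that complex must be an honest subdivision — its cells meeting pairwise in common faces — $H_{v'}$ meets the interior of no other current cell, so every cell appearing afterwards lies on one side of $H_{v'}$. A later split at $v$ inside such a cell therefore has its hyperplane $H_v$ confined to that side, so $H_v$ does not cross $H_{v'}$ in the interior of $\P_{\M[P,Q]}$, and by the dictionary of the first step this is exactly the non-strict-comparability of $v$ and $v'$. Making this translation airtight — in particular verifying at each stage that the hypotheses of Lemma~\ref{lem:split_ext} hold, so that the extended split really is a split of $\P_{\M[P,Q]}$ along the same hyperplane — is where the care lies; the case in which a path gets reversed during the decomposition, producing ``final-segment'' hyperplanes $\{x_{m+1}+\cdots+x_n=\mu\}$, is then handled by the relabelling $i\mapsto n+1-i$ and the observation that a final-segment hyperplane meets an initial-segment hyperplane along index sets that are disjoint or complementary.
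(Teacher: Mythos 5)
This is essentially the paper's own argument: the paper likewise observes that, by the iterative definition, the second split is a split of the truncated polytope $\P_{\M[J]}$ lying entirely in one closed halfspace of the first split hyperplane, and concludes from this that the two hyperplanes cannot meet in the interior of $\P_{\M[P,Q]}$. Your additional scaffolding --- identifying the split hyperplanes explicitly as initial-segment hyperplanes $\{x_1+\cdots+x_m=\mu\}$ attached to interior lattice points of the skew diagram, and reducing compatibility to non-comparability of those points --- goes beyond what the paper writes down, but the step you yourself flag as the hard one (passing from ``the restriction of $H_{S_2}$ to the truncated cell lies on one side of $H_{S_1}$'' to ``$H_{S_2}\cap\P_{\M[P,Q]}$ lies on one side of $H_{S_1}$'', which genuinely requires the face-to-face property of the decomposition and fails for an arbitrary choice of second split point) is precisely the point at which the paper's proof is equally terse, asserting without further argument that $H_{S_2}$ is contained in $\P_{\M[J]}$.
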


\begin{proof}
We proceed by proving the claim for two arbitrarily chosen split subdivisions. Let $S_{1}$ and $S_{2}$ be two split subdivisions of $\P_{\M[P,Q]}$. Since split LPM subdivisions are defined in an iterative manner, therefore without loss of generality we assume that $S_{2}$ restricted to the truncated LPM polytope $\P_{\M[J]}  = \P_{\M[P,Q]} \setminus (\P_{\M[P,Q]} \cap S_{1_{-}})$ defines a split subdivision for $\P_{\M[J]}$. But this implies that the split hyperplane $H_{S_{2}}\in \P_{\M[J]}$. Therefore, the split hyperplane $H_{S_{1}}$ and $H_{S_{2}}$ cannot meet in the interior of $\P_{\M[P,Q]}$. Hence, the splits $S_{1}$ and $S_{2}$ are compatible.   
\end{proof}

\begin{remark}
As for the case of the hypersimplex $\Delta(k,n)$ which is also a LPM polytope, we already know that any two splits are always compatible \cite[Corollary 5.6]{herrmannsplitting}.    
\end{remark}

\begin{remark}
The compatibility of splits which provide the iterative description of LPM subdivisions also shows that LPM are \emph{split matroids}, introduced by Joswig and Schroeter in \cite{joswig2017matroids}.
\end{remark}

\begin{figure}[H]
    \centering
    \includegraphics[scale=0.33]{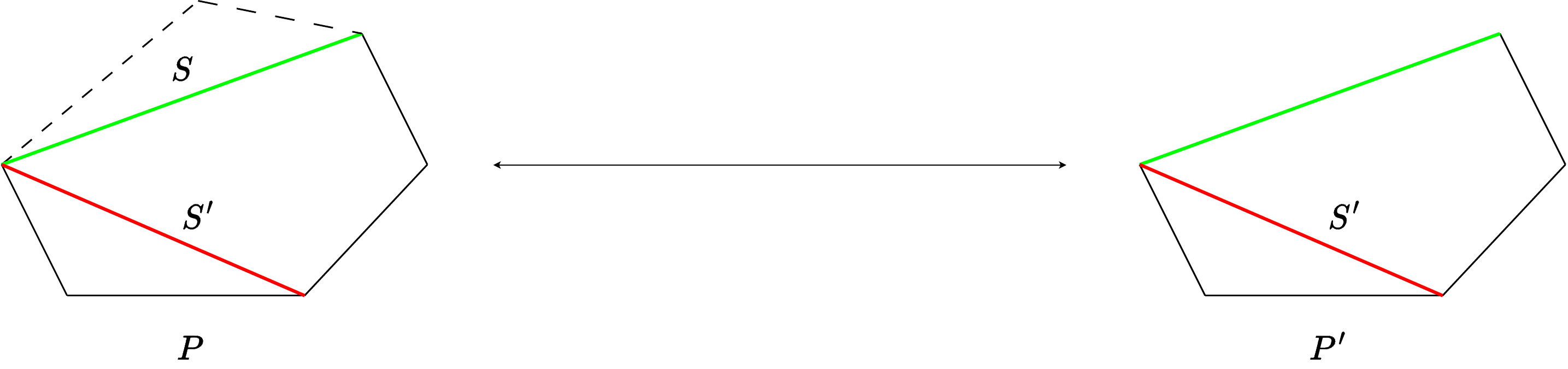}
    \caption{A pictorial description of a truncated polytope $P'$ from $P$ with respect to a split $S$, where $S'$ is a split of the truncated polytope $P'$ which can be extended to the polytope $P$}
    \label{fig:trunc}
\end{figure}

\begin{theorem}\label{thm:LPM_is_regular}
Any LPM subdivision $\Sigma$ of a lattice path matroid polytope $\P_{\M[P,Q]}$ is regular.
\end{theorem}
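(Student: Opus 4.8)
The plan is to build the lifting weight vector for an arbitrary LPM subdivision $\Sigma$ as a sum of the canonical split weight vectors from Lemma~\ref{lem:split_regular}, using the iterative hyperplane-split structure of LPM decompositions together with the compatibility established in Lemma~\ref{lem:split_compatible}. First I would recall that, by definition and by Corollary~\ref{cor:subdiv_eq_decomp}, the subdivision $\Sigma$ of $\P_{\M[P,Q]}$ arises from a finite sequence of hyperplane LPM splits: one splits $\P_{\M[P,Q]}$ along a split hyperplane $H_1$, then splits one of the two resulting (truncated) LPM polytopes along $H_2$, and so on, each step being a genuine split subdivision of the relevant cell. By Lemma~\ref{lem:split_ext}, each of these splits of a truncated LPM polytope extends to a split subdivision $S_i$ of the full polytope $\P_{\M[P,Q]}$, with its canonical weight vector $w_{S_i}$ as in Lemma~\ref{lem:split_regular}. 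So we have finitely many splits $S_1,\dots,S_m$ of $\P_{\M[P,Q]}$, pairwise compatible by Lemma~\ref{lem:split_compatible}.

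Next I would invoke the standard fact about split subdivisions (Herrmann--Joswig, \cite{herrmannsplitting}): a collection of pairwise compatible splits of a polytope $P$ has a well-defined common refinement, and this common refinement is regular, with lifting function given by the sum $w = \sum_{i=1}^{m} w_{S_i}$ of the individual split weight vectors. Concretely, one checks that lifting $\mathrm{Vert}(\P_{\M[P,Q]})$ to heights $w(v)$ and taking the lower convex hull produces exactly the common refinement of $S_1,\dots,S_m$: the lower faces are precisely the maximal regions not cut by any $H_i$, because on each side of each $H_i$ the contribution $w_{S_i}$ is affine-linear, and compatibility guarantees that the cells of the common refinement are exactly the maximal cells of $\Sigma$.

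The remaining step is to verify that the common refinement of $S_1,\dots,S_m$ really equals $\Sigma$, not merely a refinement of it. This is where the iterative structure is essential: at stage $i$ the split $S_i$, when restricted to the cell it was meant to cut, produces exactly the $i$-th elementary decomposition, and because $S_i$ extended via Lemma~\ref{lem:split_ext} is trivial (a single cell) on the part of $\P_{\M[P,Q]}$ lying on the far side of earlier hyperplanes, adding $w_{S_i}$ refines only that one cell and leaves the others untouched. Inducting on $m$, the lower hull of $\sum_{i\le m} w_{S_i}$ reproduces precisely the cells produced after $m$ elementary splits, i.e.\ the maximal cells $\P_{\M[P_j,Q_j]}$ of $\Sigma$. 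Hence $w=\sum_i w_{S_i}$ induces $\Sigma$, so $\Sigma$ is regular.

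I expect the main obstacle to be the bookkeeping in this last step: making precise the claim that each extended split $S_i$ acts as the identity on cells already separated off at earlier stages, so that the common refinement does not over-subdivide. One must track carefully which side $S'_{-}\supseteq S_{-}$ the extension in Lemma~\ref{lem:split_ext} places each cell on, and argue that distinct maximal cells of $\Sigma$ are separated by at least one of the $H_i$ while no cell is internally cut by any $H_i$. Once that combinatorial claim is nailed down, regularity follows immediately from the additivity of lifting functions for compatible splits.
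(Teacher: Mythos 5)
Your proposal follows essentially the same route as the paper's proof: extend the splits of the truncated polytopes to splits of $\P_{\M[P,Q]}$ via Lemma~\ref{lem:split_ext}, use Lemma~\ref{lem:split_compatible} for compatibility, identify $\Sigma$ as the common refinement of these splits, and conclude regularity from the Herrmann--Joswig split decomposition machinery with a nonnegative combination of the canonical split weights. Your extra attention to checking that the common refinement is exactly $\Sigma$ (and not a proper refinement) is a point the paper passes over quickly, but the argument is the same.
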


\begin{proof}

Let $\sigma$ be the LPM decomposition corresponding to $\Sigma$. We know that $\sigma$ can be obtained via iterative hyperplane LPM split decompositions. These hyperplane LPM split decompositions correspond to split subdivisions. Let $\{S_{1}, S_{2}, \hdots S_{n}\}$ be the sequence of split subdivisions which correspond to $\Sigma$. We note that $\{S_{2},\hdots, S_{n}\}$ are splits for the corresponding truncated LPM polytope $\P_{\M[J]}$. By Lemma \ref{lem:split_ext} we know that the splits $\{ S_{2}, S_{3}, \hdots S_{n} \}$ can be extended to split subdivisions for $\P_{\M[P,Q]}$ and let $\{S'_{2}, S'_{3}, \hdots S'_{n}\}$ be the corresponding split subdivisions on $\P_{\M[P,Q]}$ for $\Sigma$. We see that $\Sigma$ is the common refinement of the splits  $\{S_{1}, S'_{2}, \hdots S'_{n}\}$ and as we know from Lemma \ref{lem:split_compatible} that these splits are compatible, therefore this common refinement is well defined. We now invoke the Split Decomposition Theorem \cite[Theorem 3.10]{herrmannsplitting} to conclude that there exists a canonical weight vector 

\[ w = \sum_{S'} \alpha^{w}_{w_{S'}} w_{S'} \]

which induces $\Sigma$, where the sum runs over all splits and $\alpha^{w}_{w_{S}}$ represents the \emph{coherency index} \cite{herrmannsplitting}. Hence, $\Sigma$ is a regular subdivision.
\end{proof}

\begin{example}
For the hypersimplex $\Delta(3,6)$ we describe a LPM subdivision $\Sigma^{\LPM}$ in Section \ref{sec:comp}, illustrated in Figure \ref{fig:LPM_(3,6)}, where we see that in the corresponding LPM polytope decomposition $(M_{1} , \hdots , M_{6})$ shown in Figure \ref{fig:LPM_(3,6)_decomp}, is obtained as common refinements of four splits namely $S_{1}, S_{2}, S_{3}$ and $S_{4}$. The weight which induces  
$\Sigma^{\LPM}$ is 

\[ w_{\Sigma^{\LPM}} = \{0,0,0,0,0,0,0,1,1,2,0,0,0,1,1,2,2,2,3,5\} \]

and 

\[ w_{S_{1}} = \{0,0,0,0,0,0,0,1,1,1,0,0,0,1,1,1,1,1,1,2\} \]
\[ w_{S_{2}} = \{0,0,0,0,0,0,0,0,0,1,0,0,0,0,0,1,0,0,1,1\} \]
\[ w_{S_{3}} =  \{0,0,0,0,0,0,0,0,0,0,0,0,0,0,0,0,1,1,1,1\} \]
\[ w_{S_{4}} = \{0,0,0,0,0,0,0,0,0,0,0,0,0,0,0,0,0,0,0,1\}  \]

are the weights which induce the splits $S_{1}, S_{2}, S_{3}$ and $S_{4}$. With this, we see an example of the result described in Theorem \ref{thm:LPM_is_regular}, with the split decomposition in the following form, 

\[ w_{\Sigma^{\LPM}} = w_{S_{1}} + w_{S_{2}} + w_{S_{3}} + w_{S_{4}} \]
\end{example}

\begin{corollary}\label{cor:LPM_in_Dressian}
Let $w_{\Sigma}$ be a weight vector for an LPM subdivision $\Sigma$ of a lattice path matroid polytope $\P_{\M[P,Q]}$. Then $w_{\Sigma} \in \Dr(\M[P,Q])$.
\end{corollary}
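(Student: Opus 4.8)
The plan is to obtain the corollary as a short consequence of Theorem \ref{thm:LPM_is_regular} together with the defining feature of an LPM subdivision, by appealing to the standard characterization of the local Dressian as the set of weight vectors that induce regular \emph{matroidal} subdivisions of $\P_{\M[P,Q]}$.

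First I would note that, by hypothesis, $w_\Sigma$ is a weight vector on the vertices of $\P_{\M[P,Q]}$ --- equivalently on the bases of $\M[P,Q]$ --- whose induced lower-hull subdivision is $\Sigma$; the existence of such a $w_\Sigma$ is exactly the content of Theorem \ref{thm:LPM_is_regular}. Next I would check that $\Sigma$ is a matroidal subdivision, i.e.\ every cell of $\Sigma$ is a matroid polytope. The maximal cells are lattice path matroid polytopes $\P_{\M[P_i,Q_i]}$ by the definition of an LPM subdivision, and every lower-dimensional cell of $\Sigma$ is a face of some maximal cell; since a face of a matroid polytope is again a matroid polytope, all cells of $\Sigma$ are matroid polytopes.

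The remaining step is to invoke the equivalence --- established for arbitrary matroids by Olarte--Panizzut--Schr\"oter \cite{olarte2019local}, extending the uniform case of Speyer and the rank-$2$ case of Kapranov \cite{kapranov1992chow} --- between the local Dressian $\Dr(\M)$, as defined in Section \ref{sec:Sec2} via the three-term Pl\"ucker relations with $p_B$ set to zero for non-bases $B$, and the set of weight vectors $w$ on the bases of $\M$ for which the regular subdivision of $\P_\M$ induced by $w$ is matroidal. Applying this with $\M = \M[P,Q]$ and $w = w_\Sigma$, and using that $\Sigma$ is matroidal, yields $w_\Sigma \in \Dr(\M[P,Q])$.

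The main obstacle is not the logical structure but pinning down the invocation of the local-Dressian characterization: one must be sure that the tropical-prevariety definition of $\Dr(\M[P,Q])$ recalled in the preliminaries genuinely coincides with ``weights inducing matroid subdivisions'' for this (not necessarily connected or uniform) matroid, and that the notion of ``regular subdivision induced by $w_\Sigma$'' used there matches the lower-convex-hull construction in the definition of regularity from Section \ref{sec:Sec2}. Both facts are in the literature, so in the write-up this amounts to citing the precise statement rather than proving anything new.
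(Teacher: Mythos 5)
Your proposal is correct and follows the same route as the paper: the paper's proof is exactly the one-line observation that $w_{\Sigma}$ induces a regular matroidal subdivision and therefore lies in $\Dr(\M[P,Q])$ by \cite[Corollary 4.1]{olarte2019local}. Your additional care in checking that the lower-dimensional cells are also matroid polytopes (as faces of matroid polytopes) is a detail the paper leaves implicit, but it does not change the argument.
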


\begin{proof}
Since $w_{\Sigma}$ induces a regular and matroidal subdivsion $\Sigma$, therefore by \cite[Corollary 4.1]{olarte2019local} $w_{\Sigma}$ lies in the Dressian $\Dr(\M[P,Q])$.
\end{proof}

We know that the Dressian is endowed with two polyhedral fan structures; one coming from the tropical prevariety definition with points satisfying Pl\"ucker relations and termed as the \emph{Pl\"ucker fan structure} \cite{olarte2019local} on the Dressian. The other structure termed as the \emph{secondary fan structure} \cite{olarte2019local} which comes by virtue of being a subfan of the secondary fan. Moreover, we know that these two fan structures coincide \cite[Theorem 4.1]{olarte2019local}. We now have the required setup to describe a new polyhedral fan structure for LPM subdivisions. We begin this exploration with the following definition,

\begin{definition}
Let $\M[P,Q]$ be a lattice path matroid. We define the $\text{LPMfan}(\M[P,Q])$ to be the polyhedral fan which is the collection of all weight vectors $w$ such that, $w$ is a weight vector for an LPM subdivision of $\M[P,Q]$. Two weight vector $w_{1}$ and $w_{2}$ lie in the same cone $C$ if the LPM subdivisions $\Sigma_{1}$ and $\Sigma_{2}$ are same.     
\end{definition}

Clearly, 

\begin{equation}\label{eq:LPM_cont_1}
   \text{LPMfan}(\M[P,Q]) \subseteq \text{Dr}(\M[P,Q]) \subseteq \text{Secfan}(P_{\M[P,Q]})   
\end{equation}

where all inclusions represent inclusions as subfan. Additionally, from the definition of LPM subdivisions, given that they are obtained via refinement of split subdivisions, this makes the LPMfan sit as a subfan inside the \emph{split complex}  $\text{Split}(P_{\M[P,Q]})$, which is an abstract simplicial complex defined on the set of compatible splits of $\P_{\M[P,Q]}$ \cite{herrmannsplitting}.  
Hence, we get this refined containment relation of subfans,

\begin{equation}\label{eq:LPM_cont_2}
   \text{LPMfan}(\M[P,Q])) \subseteq \text{Split}(P_{\M[P,Q]}) \subseteq \text{Dr}(\M[P,Q]) \subseteq \text{Secfan}(P_{\M[P,Q]})   
\end{equation}

An important observation is that the hypersimplex $\Delta(k,n)$ is a lattice path matroid polytope and hence all our results for LPM polytopes follow in this case, 

\[ \text{LPMfan}(k,n) \subseteq \text{Split}(\Delta(k,n)) \subseteq \Dr(k,n) \subseteq \text{Secfan}(\Delta(k,n)) \]

An important avenue of research has been to understand the structure of the Dressian $\Dr(k,n)$, particularly for certain low values of $k$ and $n$, namely $(3,6), (3,7),$ \cite{herrmann2009draw} and $(3,8)$ \cite{herrmann2014dressians},  etc. We describe LPMfans for certain values of $k,n$ and discuss the calculations in Section \ref{sec:comp}. 

\section{Positive Tropical Grassmannian and LPM subdivisions}\label{sec:Sec4}
 
In this section, our aim is to highlight the consequences of the fact that LPMs are positroids, and towards the end we also are able to provide an answer to a question asked concerning finest matroidal subdivisions of the hypersimplex in \cite{olarte2019local}. Since it is a major theme for this section we recall the result from \cite{oh2011positroids} which shows us that lattice path matroids are positroids, upon which we build further in this section.

\begin{theorem}[Lemma 23 \cite{oh2011positroids}]
 A lattice path matroid is a positroid.   
\end{theorem}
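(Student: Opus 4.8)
The plan is to prove that every lattice path matroid $\M[P,Q]$ is a positroid by exhibiting an explicit $r \times n$ matrix with non-negative maximal minors whose associated matroid is exactly $\M[P,Q]$. The natural candidate is a Vandermonde-type (or more generally a totally positive) matrix, suitably adapted to the transversal presentation of $\M[P,Q]$. Recall from Definition~\ref{def:LPM} that a lattice path matroid has a transversal presentation in which the sets $A_j$ are intervals $[p_j, q_j] \subseteq [n]$ that are nested in the sense that $p_1 \le p_2 \le \cdots \le p_r$ and $q_1 \le q_2 \le \cdots \le q_r$; this interval/nested structure is exactly what should make the transversal matroid positroidal.

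First I would set up the matrix: pick real numbers $0 < t_1 < t_2 < \cdots < t_n$ and let $V$ be the $r \times n$ matrix whose $(i,\ell)$ entry is $t_\ell^{\,i-1}$ if $\ell \in A_i = [p_i, q_i]$ and $0$ otherwise. Thus row $i$ is a ``windowed'' Vandermonde row supported on the interval $[p_i,q_i]$. The key claims to verify are: (a) for a column set $I = \{\ell_1 < \cdots < \ell_r\}$, the minor $\det V_I$ is non-negative, and (b) $\det V_I \ne 0$ if and only if $I$ is a system of distinct representatives for $(A_1,\dots,A_r)$ compatible with the nesting, i.e. $p_i \le \ell_i \le q_i$ for all $i$ — which by the definition of $\M[P,Q]$ is exactly the condition that $I$ is a basis.

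For step (a)--(b) I would argue as follows. Expanding $\det V_I$ by the generalized Laplace / Lindström–Gessel–Viennot–type reasoning, only permutations $\pi$ with $\ell_{\pi(i)} \in [p_i,q_i]$ for all $i$ contribute. Because the intervals are nested ($p_i$ increasing, $q_i$ increasing) and the columns are sorted increasingly, a standard exchange argument shows that if any valid $\pi$ exists then the identity permutation is valid, i.e. $p_i \le \ell_i \le q_i$; this handles the ``only if'' direction and the non-vanishing characterization. For non-negativity, after restricting to the columns $I$ and the rows, the matrix is a submatrix of a Vandermonde matrix with some entries zeroed out according to the interval pattern; one shows the sign of every nonzero term in the permutation expansion agrees, or more cleanly, that $\det V_I$ equals a non-negative combination / a single Vandermonde-type determinant $\prod_{i<j}(t_{\ell_j}-t_{\ell_i})$ times a $\{0,1\}$ indicator, hence $\ge 0$. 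Alternatively, and perhaps more robustly, I would realize $\M[P,Q]$ as a matroid obtained from the uniform matroid by a sequence of operations (principal truncations / direct sums with rank-one matroids, following the ``snake'' iterative description and the hyperplane-split structure used earlier in the paper) each of which preserves the class of positroids, and then cite that positroids are closed under these operations together with duality and restriction.

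The main obstacle I expect is making the sign bookkeeping in (a) fully rigorous: the windowed Vandermonde need not be totally non-negative as an $r\times n$ matrix in the naive sense, so one cannot merely quote ``Vandermonde is totally positive.'' The cleanest route is probably the Lindström–Gessel–Viennot lemma: interpret $V$ as a path-counting (or weighted path) matrix for a planar acyclic network encoding the skew shape between $P$ and $Q$, where non-intersecting path families correspond to bases; planarity of the network forces all minors to be non-negative and identifies the support with $\M[P,Q]$. Setting up that network carefully — so that its source-to-sink path systems are in bijection with the NE-lattice paths between $P$ and $Q$ — is the technical heart, but once in place it delivers both non-negativity and the matroid identification simultaneously, completing the proof.
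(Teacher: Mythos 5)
Your construction is the same one the paper uses: a ``windowed'' Vandermonde matrix whose $i$-th row is supported on the interval $[p_i,q_i]$, with the bases of $\M[P,Q]$ identified as exactly the column sets admitting a nonzero (equivalently, permutation-wise valid) diagonal. Where you diverge is in the finishing step, which you rightly identify as the technical heart: the paper does \emph{not} do sign bookkeeping or invoke Lindstr\"om--Gessel--Viennot, but instead specializes the variables to grow extremely fast ($x_1>1$ and $x_{i+1}=x_i^{k^2}$), so that in the permutation expansion of $\det(A_I)$ the diagonal term dominates the sum of all other terms in absolute value; positivity of the minor is then forced whenever the diagonal is nonzero, and the minor vanishes otherwise. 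Your LGV alternative also works and is arguably more conceptual (planarity gives total non-negativity for free and identifies the support with the non-intersecting path families, i.e.\ the bases), but note one imprecision: a windowed Vandermonde with generic increasing $t_\ell$ is not literally the path matrix of a planar network, so the LGV route is really a \emph{second} realization built from the skew shape rather than a reinterpretation of $V$; if you go that way you should construct the network from scratch rather than claim it computes $V$. Your generic-parameter version of the Vandermonde route, as you acknowledge, still owes a rigorous sign argument, and the paper's rapid-growth specialization is precisely the device that discharges that debt.
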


\begin{proof}
Let $\M[P,Q])$ be a LPM. For the result to be true, it is sufficient to construct a $k \times n$ matrix $A$ such that
\begin{align*}
    \det(A_I) &=
    \begin{cases}
    0 \quad \quad \quad \quad &I \in \binom{[n]}{k}  \setminus {\M[P,Q]} \\ 
    \alpha  \quad   &I \in \M[P,Q]
    \end{cases}
\end{align*}
where $\alpha > 0$. Such a matrix can be constructed as follows. Let $A = (a_{i,j})^{k,n}_{i,j=1,1}$ be the $k \times n$ \emph{Vandermonde} matrix. Set $a_{i,j} = 0 \>\> \forall \>\> j \in [P_{i},Q_{i}]$, where $P_{i}$ and $Q_{i}$ represent the $i^{th}$ north step in the lattice paths $P$ and $Q$, respectively. So $A$ has the following form,
\begin{align}
    a_{i,j} &= 
  \begin{cases}
  x_{i}^{j-1} \quad \quad  \text{if} \> P_{i} \leq j \leq  Q_{i} \\ 
  0  \quad \quad \quad  \> \text{otherwise}
  \end{cases}
\end{align}
Assign values to variables $x_{1}, \hdots , x_{k}$ such that $x_{1} > 1$ and $x_{i+1} = x_{i}^{k^{2}} \forall i \in [k-1]$. We denote the submatrix $A_{[1, \hdots , i][c_{1}, \hdots, c_{i}]}$ as a submatrix of $A$ which has rows indexed from 1 to $i$ and columns indexed from $c_{1}$ to $c_{i}$. We have $\det(A_I) > 0$ if and only if $A_{[1, \hdots, k]I}$ has nonzero diagonal entries, which happens if and only if $I \in \M[P,Q])$.
\end{proof}

Lusztig \cite{Lusztig1994} and Postnikov\cite{https://doi.org/10.48550/arxiv.math/0609764,postnikov2018positive} introduced the notion of positivity for Grassmannians. This notion extends naturally to the tropical Grassmannian and Dressian \cite[Section 4.3]{maclagan2021introduction}. In \cite{speyer2021positive} and independently in \cite{arkani2021positive}, the authors prove the following equality between  $\Trop^{+} Gr(k,n)$ and $Dr^{+}(k,n)$.

\begin{theorem}[Theorem 3.9 \cite{speyer2021positive}]
 The positive tropical Grassmannian  $\Trop^{+}\Gr(k,n)$ equals the positive Dressian $\Dr^{+}(k,n)$. 
\end{theorem}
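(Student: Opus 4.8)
The inclusion $\Trop^{+}\Gr(k,n)\subseteq\Dr^{+}(k,n)$ is immediate, since the three-term Plücker relations are among the generators of the Plücker ideal $\mathcal{I}_{k,n}$: a point satisfying $\Trop^{+}(f)$ for every $f\in\mathcal{I}_{k,n}$ in particular satisfies it for the three-term relations. All of the content is the reverse inclusion, and the plan is to route it through positroidal subdivisions of the hypersimplex $\Delta(k,n)$.

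\textbf{Step 1: a point of $\Dr^{+}(k,n)$ induces a positroidal subdivision.} Let $p\in\Dr^{+}(k,n)$. Since $\Dr^{+}(k,n)\subseteq\Dr(k,n)$, Speyer's matroid subdivision theorem (equivalently, the local Dressian description of \cite{olarte2019local}) already tells us that the regular subdivision $\mathcal{D}_{p}$ of $\Delta(k,n)$ induced by $p$ is matroidal. I would then show that the positivity of the three-term relations forces the matroid $M_{\sigma}$ of each cell $\sigma$ to be a positroid: restricting $p$ to the vertex set of $\P_{M_{\sigma}}$ gives a point of the local Dressian $\Dr(M_{\sigma})$ that still satisfies the surviving positive three-term relations, and — using Postnikov's cyclic-interval characterization of positroids and the behaviour of three-term relations under restriction to faces — such a matroid must be a positroid. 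Low-dimensional and disconnected cells need care here.

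\textbf{Step 2: a positroidal subdivision comes from $\Trop^{+}\Gr(k,n)$.} The goal is to realize $p$ as the vector of valuations of the Plücker coordinates of a single point of $\Gr(k,n)$ over the Puiseux series field $\CC\{\{t\}\}$ all of whose Plücker coordinates have positive leading coefficient — a point of the totally positive Grassmannian over the valued field, which tropicalizes into $\Trop^{+}\Gr(k,n)$. I would build this point by induction on a sequence of splits refining $\mathcal{D}_{p}$ (Lemma \ref{lem:split_regular} and the split-decomposition machinery of \cite{herrmannsplitting}): the base case is the trivial subdivision, handled by a generic point of $\Gr^{>0}(k,n)$ over $\RR$; for the inductive step, peel off one split, observe that along its split hyperplane the subdivision restricts to positroidal subdivisions of the two smaller hypersimplices cut out by the boundary matroids, apply the inductive hypothesis there, and glue the two local totally positive realizations along the shared positroid facet. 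Making this gluing consistent — choosing the two local realizations so that they agree on the common cell — is the technical heart. An alternative route, which sidesteps the valued-field bookkeeping, uses the cluster structure: by Scott's theorem the coordinate ring of $\Gr(k,n)$ is a cluster algebra in which the Plücker coordinates appear among the cluster variables and the three-term relations among the exchange relations, and $\Gr^{>0}(k,n)$ is the positive chart of a fixed seed; tropicalizing, every tropical Plücker coordinate is a min-plus-linear function of the tropicalized coordinates of that fixed seed (the positive Laurent phenomenon), so the image of the tropicalized positive parametrization — which is $\Trop^{+}\Gr(k,n)$ — is already cut out by the tropicalized mutation relations, i.e.\ the positive tropical three-term relations.

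\textbf{Main obstacle.} In either route the crux is that tropicalization does not commute with passing to a generating set of an ideal, so the reduction from all Plücker relations to the three-term ones is false tropically without positivity; it genuinely uses either compatible totally positive realizations over a valued field or the positivity of the cluster Laurent expansions. Concretely, the hard part of the first route is the inductive gluing across split hyperplanes, and of the second is verifying that one fixed Grassmannian seed expresses \emph{every} Plücker coordinate with positive coefficients and that the tropicalized seed map is a bijection onto the claimed fan.
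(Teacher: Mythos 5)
First, a point of order: the paper does not prove this statement. It is imported verbatim from Speyer and Williams \cite{speyer2021positive} (and proved independently in \cite{arkani2021positive}), so there is no internal proof to compare yours against; I can only measure your proposal against the published arguments. Your easy inclusion $\Trop^{+}\Gr(k,n)\subseteq\Dr^{+}(k,n)$ is fine, and your Step 1 is essentially Theorem 4.3 of the same reference (also quoted in this paper), so the architecture is reasonable. Your second, cluster-theoretic route for Step 2 is in fact close to the actual published proof: a point of the positive Dressian is determined by its values on a maximal weakly separated collection (the face labels of a reduced plabic graph), every Pl\"ucker coordinate is a subtraction-free Laurent polynomial in that seed, and tropicalizing the resulting positive parameterization over the Puiseux field shows the positive Dressian lies in $\Trop^{+}\Gr(k,n)$. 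The two things you flag as unverified --- that one fixed seed expresses \emph{every} Pl\"ucker coordinate subtraction-freely, and that the tropicalized seed map inverts correctly --- are precisely the content of the published proof, so as written this route is an accurate plan rather than a proof.

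Your first route, however, has a gap that is not merely technical. The induction ``peel off one split'' presupposes that every regular positroidal subdivision of $\Delta(k,n)$ is a common refinement of splits, and this is false. Already for $\Delta(3,6)$ the positive Dressian has sixteen rays, and two of them correspond to the two non-Pl\"ucker cluster variables of $\Gr(3,6)$: these induce coarsest positroidal subdivisions into \emph{three} maximal cells (the paper's Section 5 records that thirty rays of $\Dr(3,6)$ are of this non-split type). A coarsest subdivision with three maximal cells cannot be a common refinement of splits --- it would then be properly refined, contradicting coarseness --- so for such a point of $\Dr^{+}(3,6)$ your induction has no split to peel off and never gets started. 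Even for points whose subdivision is a refinement of splits, the gluing of the two local totally positive realizations along the shared facet is asserted rather than performed, and matching valuations across the two pieces is exactly where ``tropicalization of an intersection equals intersection of tropicalizations'' fails without further input. Discard route one; if you want a complete argument, flesh out route two along the lines of \cite{speyer2021positive} or \cite{arkani2021positive}.
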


%\begin{theorem}[Theorem 9.2 \cite{arkani2021positive}]
%Let $\M$ be a positroid. Then $Trop \Pi_{\M} = Dr^{+}
%{(\M)}$, where $\Pi_{\M}$ represents the positroidal cell in the positive Grassmannian corresponding to $\M$.
%\end{theorem}

A generalization of this theorem to the case of positive local Dressian with respect to a positroid $\M$ is provided in \cite{arkani2021positive}. An important parameterization of points residing in the positive Dressian is explained in this result,

\begin{theorem}[Theorem 4.3 \cite{speyer2021positive}]
 Let $\Sigma$ be a regular subdivision of $\Delta(k,n)$ induced by a weight vector $w_{\Sigma}$. Then the following are equivalent:
 \begin{enumerate}
     \item[1.] $w$ is a positive tropical Pl\"ucker vector.
    \item[2.] Every face of $\Sigma$ is a positroid.
 \end{enumerate}
\end{theorem}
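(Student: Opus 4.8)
The plan is to prove the two implications separately, in each case \emph{localizing at the octahedron} $\Delta(2,4)$, which is the combinatorial shadow of a single three-term Pl\"ucker relation. For $(2)\Rightarrow(1)$ this localization is essentially the whole argument; for $(1)\Rightarrow(2)$ one must in addition pass through the positive tropical Grassmannian, invoking the equality $\Dr^{+}(k,n)=\Trop^{+}\Gr(k,n)$ recorded above together with the standard description of the cells of a regular matroidal subdivision of $\Delta(k,n)$ as the \emph{initial matroids} of a point of $\Gr(k,n)$ over a valued field (see \cite{maclagan2021introduction}). One may assume $2\le k\le n-2$, as otherwise there are no three-term Pl\"ucker relations and both conditions hold trivially.

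For $(2)\Rightarrow(1)$ I would argue as follows. The positive three-term Pl\"ucker relations are indexed by pairs $(S,\{i,j,k,l\})$ with $S\in\binom{[n]}{k-2}$, $i<j<k<l$ and $S\cap\{i,j,k,l\}=\emptyset$, and the relation $p_{Sij}p_{Skl}-p_{Sik}p_{Sjl}+p_{Sil}p_{Sjk}=0$ attached to $(S,\{i,j,k,l\})$ involves only the six coordinates $w_{Sij},w_{Sik},w_{Sil},w_{Sjk},w_{Sjl},w_{Skl}$ of $w_{\Sigma}$. These are exactly the lifting heights on the six vertices of the $3$-dimensional octahedral face $G$ of $\Delta(k,n)$ obtained by setting $x_{m}=1$ for $m\in S$ and $x_{m}=0$ for $m\notin S\cup\{i,j,k,l\}$, so $\Sigma$ restricts to a regular subdivision $\Sigma|_{G}$ of $G\cong\Delta(2,4)$ all of whose cells are faces of cells of $\Sigma$, hence positroid polytopes by hypothesis. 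I would then invoke the short finite classification of matroidal subdivisions of the octahedron --- only the trivial subdivision and the three splits occur --- and observe that in the ``crossing'' split one of the two maximal cells is the matroid polytope of the rank-$2$ matroid on $\{i,j,k,l\}$ in which $i$ and $k$ are parallel, which is not a positroid because in a totally nonnegative $2\times 4$ matrix the parallel columns must be cyclically consecutive while $i$ and $k$ are not. Hence $\Sigma|_{G}$ is trivial or one of the two noncrossing splits, and a direct computation of which regular subdivision of the octahedron a height vector induces shows that this is the case precisely when $w_{Sik}+w_{Sjl}=\min(w_{Sij}+w_{Skl},\,w_{Sil}+w_{Sjk})$, i.e.\ precisely when the positive three-term relation at $(S,\{i,j,k,l\})$ holds. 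Ranging over all $(S,\{i,j,k,l\})$ yields that $w_{\Sigma}$ is a positive tropical Pl\"ucker vector.

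For $(1)\Rightarrow(2)$ I would use $\Dr^{+}(k,n)=\Trop^{+}\Gr(k,n)$ (together with the positive form of the fundamental theorem of tropical geometry) to realize $w_{\Sigma}$ as $\operatorname{val}(v)$ for a point $v$ of $\Gr(k,n)$ over the field of real Puiseux series all of whose Pl\"ucker coordinates $p_{I}(v)$ have positive leading coefficient. By the standard correspondence between regular matroidal subdivisions and initial degenerations \cite{maclagan2021introduction}, each cell $F$ of $\Sigma$ is $\P_{\M_{F}}$, where $\M_{F}$ is the matroid of an initial degeneration of $v$ whose Pl\"ucker coordinates are the leading coefficients of the $p_{I}(v)$ with $e_{I}\in F$ (and $0$ otherwise); these are nonnegative, so that initial degeneration lies in $\Gr^{\geq 0}(k,n)$ and $\M_{F}$ is a positroid. (Alternatively one argues locally: $w_{\Sigma}$ restricts to a point of the positive local Dressian $\Dr^{+}(\M_{F})$ inducing the trivial subdivision of $\P_{\M_{F}}$, and by the positive local version of the theory in \cite{arkani2021positive} a matroid whose positive local Dressian is nonempty is a positroid.)

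The main obstacle is the direction $(1)\Rightarrow(2)$: the octahedral localization that drives $(2)\Rightarrow(1)$ can never certify that an individual cell is a positroid, since restricting a single matroid polytope to an octahedral face only produces the trivial subdivision there. One therefore genuinely needs either the passage through $\Trop^{+}\Gr(k,n)$ together with a total-positivity-preserving initial degeneration, or the nonemptiness criterion for positive local Dressians; both rest on the deeper cited results. By contrast $(2)\Rightarrow(1)$ is a finite computation on the octahedron plus the bookkeeping that matches octahedral faces of $\Delta(k,n)$ with three-term Pl\"ucker relations, and should go through routinely.
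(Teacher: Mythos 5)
This theorem is imported by the paper as a citation (Theorem 4.3 of Speyer--Williams) and is not proved here, so there is no in-paper proof to compare against; your proposal is, in substance, a correct reconstruction of the cited argument. Both halves are sound: the octahedral localization for $(2)\Rightarrow(1)$ (matroidal subdivisions of $\Delta(2,4)$ are the trivial one or a split, and only the crossing split produces the non-positroid with a non-cyclically-consecutive parallel pair, which is exactly ruled out by the positive three-term relation), and the passage through $\Dr^{+}(k,n)=\Trop^{+}\Gr(k,n)$ to realize $w_{\Sigma}$ over positive Puiseux series so that each face's matroid is carried by a totally nonnegative leading-coefficient matrix, are the same two pillars the original proof rests on; the only step worth flagging as nontrivial bookkeeping is the exact (rather than approximate) realizability of $w_{\Sigma}$ as a valuation, which is supplied by the cited equality of the positive Dressian and the positive tropical Grassmannian.
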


The generalization of this to local positive Dressian is provided again in \cite[Proposition 8.3]{arkani2021positive}. With this parameterization, we conclude that a point inducing a LPM subdivision resides in the positive Dressian.

\begin{lemma}\label{lem:lpmsubposdress}
Let $\Sigma$ be an LPM subdivision of $\P_{\M[P,Q]}$ and let $w_{\Sigma}$ be the weight vector for $\Sigma$. Then $w \in \Dr^{+}(\M[P,Q]) =  \Trop^{+}\Gr(\M[P,Q])$. 
\end{lemma}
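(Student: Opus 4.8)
The plan is to chain together the results that have already been assembled in the excerpt, so that almost nothing new is needed. First I would invoke Theorem~\ref{thm:LPM_is_regular}: the LPM subdivision $\Sigma$ of $\P_{\M[P,Q]}$ is regular, so $w_\Sigma$ is a genuine weight vector inducing a regular subdivision. Next, by the very definition of an LPM subdivision, every maximal cell of $\Sigma$ is a lattice path matroid polytope; since faces of LPM polytopes are again matroid polytopes and, more to the point, faces of $\Sigma$ are faces of the $\P_{\M[P_i,Q_i]}$ (or intersections thereof, which by Corollary~\ref{cor:subdiv_eq_decomp} are common faces), every face of $\Sigma$ is the matroid polytope of some lattice path matroid. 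Lattice path matroids are positroids by Lemma~23 of \cite{oh2011positroids} (quoted above), and positroids are closed under the operations (restriction, contraction, taking minors) that produce the matroids of faces; hence every face of $\Sigma$ is a positroid.

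With that in hand, I would apply the generalization of Theorem~4.3 of \cite{speyer2021positive} to the local setting, namely \cite[Proposition~8.3]{arkani2021positive}: for a regular subdivision of $\P_{\M[P,Q]}$, the weight vector is a positive tropical Pl\"ucker vector if and only if every face of the subdivision is a positroid. We have just verified the right-hand condition, so $w_\Sigma$ is a positive tropical Pl\"ucker vector, i.e. $w_\Sigma \in \Dr^{+}(\M[P,Q])$. Finally, the equality $\Dr^{+}(\M[P,Q]) = \Trop^{+}\Gr(\M[P,Q])$ is exactly the positroid generalization of Theorem~3.9 of \cite{speyer2021positive} established in \cite{arkani2021positive}, which gives the stated conclusion.

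The only genuine point requiring care — and the step I expect to be the main obstacle — is justifying that \emph{every} face of an LPM subdivision is a positroid, not merely the maximal cells. One must check that the matroid of an arbitrary face of $\P_{\M[P,Q]}$ appearing in $\Sigma$ is again a lattice path matroid (or at least a positroid); the cleanest route is to recall from \cite{bonin2003lattice} that faces of lattice path matroid polytopes correspond to direct sums/minors of lattice path matroids, together with the closure of the class of positroids under minors. Once this structural fact is cited, the rest is a formal concatenation of the quoted theorems, so I would keep the write-up to essentially the two sentences above plus the chain of citations.
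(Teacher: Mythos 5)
Your proposal is correct and follows essentially the same route as the paper: the paper's proof likewise reduces to the criterion of \cite[Proposition 8.3]{arkani2021positive} that a weight vector lies in the positive local Dressian if and only if it induces a positroidal subdivision, combined with the fact that lattice path matroids are positroids. The only difference is that the paper states the criterion in terms of maximal cells rather than all faces, so the ``main obstacle'' you flag dissolves (and is in any case harmless, since faces of positroid polytopes are again positroid polytopes).
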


\begin{proof}
We know that a point $w$ lies in the positive Dressian if all the  maximal cells of the subdivision induced by this point as a weight vector on $\P_{\M[P,Q]}$ are the matroid polytopes of a positroid, i.e., $w$ induces a \emph{positroidal} subdivision \cite[Proposition 8.3]{arkani2021positive}. We know that LPM are positroids, hence it also induces a positroidal subdivision, and therefore $w \in \Dr^{+}(\M[P,Q]) =  \Trop^{+} \Gr(\M[P,Q])$.  
\end{proof}

Another important result proven in \cite{speyer2021positive} is about the classification of the finest positroidal subdivision of the hypersimplex $\Delta(k,n)$.

\begin{theorem}\label{thm:posit_clasif}
Let $\Sigma$ be a regular positroidal subdivision of $\Delta(k,n)$. Then the following are equivalent:
\begin{enumerate}
    \item[1.] $\Sigma$ is a finest subdivision.
    \item[2.] Every facet of $\Sigma$ is the matroid polytope of a  series-parallel matroid.
    \item[3.] Every octahedron in $\Sigma$ is subdivided.
\end{enumerate}
\end{theorem}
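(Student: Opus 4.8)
The plan is to prove the cycle of implications $(1)\Rightarrow(3)\Rightarrow(2)\Rightarrow(1)$, using two standard facts about matroid polytopes as a backbone. The first is the \emph{face--minor dictionary}: every face of a matroid polytope $\P_{\M}$ is itself the matroid polytope of a minor of $\M$, and conversely every minor of $\M$ can be written in the form $\M/I\setminus J$ with $I$ independent and $J$ coindependent in $\M$ and $I\cap J=\emptyset$, in which case $\P_{\M/I\setminus J}$ is exactly the face $F_{I,J}:=\{x\in\P_{\M}: x_i=1\ (i\in I),\ x_j=0\ (j\in J)\}$; moreover, when $\M$ has rank $k$ on $[n]$ this same set $F_{I,J}$ is a face of the hypersimplex $\Delta(k,n)$, affinely a copy of $\Delta(k-|I|,\,n-|I|-|J|)$. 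The second is Brylawski's excluded-minor theorem: a connected matroid is series-parallel if and only if it has no minor isomorphic to $\U_{2,4}$ or to the graphic matroid $\M(K_{4})$. Since positroids are minor-closed and $\M(K_{4})$ is not a positroid, a connected positroid is series-parallel precisely when it has no $\U_{2,4}$-minor; by the dictionary, and since $\P_{\U_{2,4}}=\Delta(2,4)$ is the octahedron, this is equivalent to $\P_{\M}$ having no octahedral face.

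For $(1)\Rightarrow(3)$ I would argue contrapositively. If some octahedral face $F\cong\Delta(2,4)$ of $\Delta(k,n)$ is not subdivided by $\Sigma$ --- i.e.\ $\Sigma|_{F}$ is trivial --- then $F$ is a cell of $\Sigma$. The octahedron admits a nontrivial matroid subdivision into two square pyramids, obtained by any one of its three equatorial splits, and each pyramid is the matroid polytope of a series-parallel positroid ($\U_{2,4}$ with a chosen pair of elements made parallel). Replacing the single cell $F$ by these two cells yields a positroidal subdivision of $\Delta(k,n)$ strictly finer than $\Sigma$, which is again regular; hence $\Sigma$ is not finest.

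For $(3)\Rightarrow(2)$, I would take a facet $\sigma=\P_{\M}$ of $\Sigma$; since $\P_{\M}$ is full-dimensional in $\Delta(k,n)$, the positroid $\M$ is connected of rank $k$ on $[n]$. If $\M$ were not series-parallel it would have a $\U_{2,4}$-minor, which we write as $\M/I\setminus J$ with $I$ independent and $J$ coindependent; then $F_{I,J}$ is at once a face of $\P_{\M}=\sigma$ and an octahedral face of $\Delta(k,n)$. Being a face of the cell $\sigma$, $F_{I,J}$ is a cell of $\Sigma$, so $\Sigma$ does not subdivide it, contradicting $(3)$. Hence every facet of $\Sigma$ is the matroid polytope of a series-parallel matroid.

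For $(2)\Rightarrow(1)$, suppose every facet of $\Sigma$ is a series-parallel matroid polytope but $\Sigma$ is not finest. Then a strictly finer positroidal subdivision restricts, on some facet $\sigma=\P_{\M}$, to a nontrivial positroidal (in particular matroid) subdivision, while $\M$ is connected and series-parallel. This contradicts the assertion that \emph{the matroid polytope of a connected series-parallel matroid admits no nontrivial matroid subdivision}, which is the technical heart of the theorem and the step I expect to be the main obstacle. The natural approach is induction on $|E(\M)|$ along the series/parallel construction of $\M$: choosing $e$ in series or in parallel with another element, one shows that any matroid subdivision of $\P_{\M}$ restricts on the opposite facets $\{x_e=0\}$ and $\{x_e=1\}$ to matroid subdivisions of polytopes of strictly smaller series-parallel matroids --- trivial by induction --- and then that the prism/pyramid structure of $\P_{\M}$ over these facets forces the subdivision to be trivial, the care needed being with the possible disconnectedness of the smaller matroids that arise. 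One may alternatively invoke this classification as established in \cite{speyer2021positive} and the matroid-subdivision literature; with it in place, the three implications above close the cycle.
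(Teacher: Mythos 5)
The paper offers no proof of this statement to compare against: it is imported verbatim from \cite{speyer2021positive} (their Theorem 9.12), so your attempt can only be judged on its own terms. Your skeleton --- the cycle $(1)\Rightarrow(3)\Rightarrow(2)\Rightarrow(1)$, the face--minor dictionary, and the identification of octahedral faces with $\U_{2,4}$-minors --- is the right one, and the leg $(3)\Rightarrow(2)$ is essentially complete: a full-dimensional cell has connected matroid, a connected positroid that is not series-parallel has a $\U_{2,4}$-minor (this is the paper's Lemma \ref{cor:series_paraller_minor}), the corresponding face $F_{I,J}$ is simultaneously an octahedral face of $\Delta(k,n)$ and a face of that cell, and a face of a cell of a polyhedral complex is never subdivided by the complex.

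The other two legs have genuine gaps. In $(1)\Rightarrow(3)$ you ``replace the single cell $F$ by these two cells'': but $F\cong\Delta(2,4)$ is a $3$-dimensional face of $(n-1)$-dimensional maximal cells, so splitting $F$ in isolation does not yield a subdivision of $\Delta(k,n)$ --- every maximal cell having $F$ as a face must be refined compatibly, and the result must be shown to be regular and positroidal. The repair has to be global, e.g.\ taking the common refinement of $\Sigma$ with a regular positroidal subdivision of all of $\Delta(k,n)$ that restricts nontrivially to $F$ (regularity via adding $\epsilon$ times the second weight vector, positroidality via the fact that intersections of positroid polytopes are positroid polytopes); producing such a globally defined refining weight is exactly where the positive tropical Pl\"ucker relation attached to the octahedron does the work, and none of this appears in your argument. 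In $(2)\Rightarrow(1)$ the entire content is the claim that the polytope of a connected series-parallel matroid admits no nontrivial matroid subdivision; you flag this yourself but supply only a sketch whose decisive steps (that the prism/pyramid structure over the facets $\{x_e=0\}$ and $\{x_e=1\}$ forces triviality, and the handling of disconnected minors) are precisely the hard part, while the fallback of citing \cite{speyer2021positive} leaves the key step unproved. The clean non-circular route is to quote the independent fact that binary matroids are indecomposable and note that a connected matroid with no $\U_{2,4}$-minor is binary by Tutte's theorem --- the same chain the paper itself uses, in the opposite direction, at Corollary \ref{cor:indecompose}.
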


Along with the classification, \cite{speyer2021positive} also provides the exact number of maximal cells in a finest positroidal subdivision of $\Delta(k,n)$,

\begin{corollary}
Every finest positroidal subdivision of $\Delta(k,n)$ has exactly $\binom{n-2}{k-1}$ facets.
\end{corollary}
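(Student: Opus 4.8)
The plan is to count the facets of a finest positroidal subdivision $\Sigma$ of $\Delta(k,n)$ by combining the structural characterization of Theorem \ref{thm:posit_clasif} with a known enumerative identity for series-parallel matroid polytopes. By Theorem \ref{thm:posit_clasif}, every facet of $\Sigma$ is the matroid polytope of a series-parallel matroid; since these matroids are connected and of rank $k$ on $n$ elements (their polytopes are full-dimensional cells inside $\Delta(k,n)$), each such polytope has normalized volume equal to the normalized volume of the minimal matroid of rank $k$ on $n$ elements, i.e. equal to that of a snake. The key numerical input is that this common normalized volume is $1$: minimal (snake) matroid polytopes are the unimodular simplices among matroid polytopes, and more generally every series-parallel matroid polytope on $[n]$ of rank $k$ has normalized volume $1$. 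I would cite Ferroni's work on Ehrhart polynomials of lattice path matroids / minimal matroids (\cite{ferroni2022ehrhart}, \cite{ferroni2022valuative}) for the statement that series-parallel matroids of rank $k$ on $n$ elements have matroid polytopes of normalized volume $1$.

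With that in hand the argument is a volume count. First I would recall that the normalized volume of the hypersimplex $\Delta(k,n)$ is the Eulerian number $A(n-1,k-1)$, but more usefully, I would instead argue cell-by-cell: a regular subdivision $\Sigma$ of a polytope $P$ into full-dimensional cells $C_1,\dots,C_N$ satisfies $\sum_{i=1}^N \operatorname{vol}(C_i) = \operatorname{vol}(P)$ for normalized volume. Since each facet $C_i$ is a series-parallel matroid polytope, $\operatorname{vol}(C_i) = 1$, so $N = \operatorname{vol}(\Delta(k,n))$ in normalized volume. Thus it remains to show $\operatorname{vol}(\Delta(k,n)) = \binom{n-2}{k-1}$ — but this is \emph{not} literally true as stated for the Eulerian-number normalization, so the correct move is: the relevant normalization here is the one in which a snake (minimal matroid) polytope has volume $1$, and with respect to that normalization the number of maximal cells in any finest positroidal subdivision is forced to be a subdivision-independent constant. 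Concretely, I would invoke that \emph{all} finest positroidal subdivisions of $\Delta(k,n)$ have the same number of cells (which follows since they all have cells of normalized volume $1$ and the total volume is fixed), and then exhibit one explicit finest positroidal subdivision — for instance the one coming from the BCFW-type / noncrossing lattice path construction, or the subdivision dual to a generic positive tropical Plücker vector — whose cells are naturally indexed by a set of size $\binom{n-2}{k-1}$.

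The main obstacle, and the step I would spend the most care on, is pinning down the enumeration $\binom{n-2}{k-1}$ rather than the volume normalization bookkeeping. The cleanest route is to produce an explicit finest positroidal subdivision with cells in bijection with lattice paths (equivalently, with a combinatorial family counted by $\binom{n-2}{k-1}$): the $(k,n)$-noncrossing lattice paths in an appropriate rectangle, or BCFW cells, give exactly $\binom{n-2}{k-1}$ cells. Once one such subdivision is in hand, Theorem \ref{thm:posit_clasif} guarantees it is finest, the volume argument above shows every finest positroidal subdivision has the same cell count, and therefore every finest positroidal subdivision of $\Delta(k,n)$ has exactly $\binom{n-2}{k-1}$ facets. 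I would present the volume-equidissection lemma first, then the series-parallel $=$ volume $1$ input, then the explicit example, and conclude.
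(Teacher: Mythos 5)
There is a genuine gap, and it is fatal to the strategy rather than a fixable detail. Your key numerical input --- that every series-parallel matroid polytope of rank $k$ on $[n]$ has normalized volume $1$, so that counting facets reduces to computing a volume --- is false. Already for $\Delta(2,4)$ (normalized volume $4$, the Eulerian number $A(3,1)$), the finest positroidal subdivision is the split into two square pyramids; each pyramid is the polytope of a connected series-parallel positroid of rank $2$ on $4$ elements and has normalized volume $2$, not $1$. Worse, no renormalization can rescue the equidissection argument: lattice volume is additive and intrinsic, and the cells of a finest positroidal subdivision do not even have \emph{equal} volumes in general. For $(k,n)=(2,5)$ the hypersimplex has normalized volume $A(4,1)=11$ while the corollary asserts exactly $\binom{3}{1}=3$ facets, and $3\nmid 11$; so any argument of the form ``$N\cdot(\text{common cell volume})=\operatorname{vol}(\Delta(k,n))$'' cannot produce the right count. (Ferroni's computation in \cite{ferroni2022ehrhart} gives the volume of the \emph{minimal} matroid polytope, but finest positroidal subdivisions contain non-minimal series-parallel cells of strictly larger volume.) Once the equidissection lemma collapses, your second step --- exhibiting one explicit finest subdivision with $\binom{n-2}{k-1}$ cells --- only proves that \emph{some} finest positroidal subdivision has that many facets, not that every one does, which is the whole content of the statement.

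For comparison: the paper offers no proof of its own; the corollary is quoted from \cite{speyer2021positive}, where the argument is not volumetric but goes through Speyer's $f$-vector theorem (the $K$-theoretic matroid invariant $\omega$). That theorem bounds the number of facets of any matroid subdivision induced by a \emph{realizable} tropical Pl\"ucker vector by $\binom{n-2}{k-1}$, with equality precisely when every facet is the polytope of a series-parallel matroid; since positive tropical Pl\"ucker vectors are realizable (over positive Puiseux series) and Theorem \ref{thm:posit_clasif} identifies finest positroidal subdivisions with the all-series-parallel case, the exact count follows. If you want a self-contained proof, that additive invariant $\omega$ (which equals $1$ on connected series-parallel matroids and is superadditive on subdivisions) is the correct replacement for the volume you were trying to use.
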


We also recall the following classification of connected positroids which are series-parallel,

\begin{lemma}\label{cor:series_paraller_minor}
A connected positroid is series-parallel if and only if it has no
uniform matroid $\U_{2,4}$ as a minor.   
\end{lemma}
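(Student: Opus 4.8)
The plan is to derive the statement from two classical inputs: Brylawski's excluded‑minor description of series‑parallel matroids, and the fact that the cycle matroid $M(K_{4})$ of the complete graph on four vertices is not a positroid. Recall Brylawski's theorem: a connected matroid $N$ has no minor isomorphic to $\U_{2,4}$ or to $M(K_{4})$ if and only if $N$ is series‑parallel (i.e.\ the cycle matroid of a series‑parallel network). Granting this, the biconditional in the Lemma breaks into an easy implication and a reduction, the latter resting on the non‑positroidness of $M(K_{4})$.

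For the direction ``series‑parallel $\Rightarrow$ no $\U_{2,4}$‑minor'', I would argue that the class of series‑parallel matroids is minor‑closed and consists of graphic, hence binary, matroids, whereas $\U_{2,4}$ is not binary: over $\FF_{2}$ there are only three nonzero vectors, so one cannot realize four pairwise non‑parallel nonzero columns, and thus $\U_{2,4}$ is not representable over $\FF_{2}$. Consequently $\U_{2,4}$ is a minor of no series‑parallel matroid, which already yields the ``only if'' part of the Lemma (no connectedness or positivity is needed here).

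For the converse, let $\M$ be a connected positroid having no $\U_{2,4}$‑minor. By Brylawski's theorem, $\M$ is series‑parallel unless it contains $M(K_{4})$ as a minor. Here I would invoke the fact that the class of positroids is closed under taking minors: deleting or contracting an element of a positroid, equipped with the cyclic order induced on the remaining ground set, is again a positroid. Hence, if $\M$ had an $M(K_{4})$‑minor, then $M(K_{4})$ would itself be a positroid, and it remains only to rule this out; once that is done, $\M$ has no $M(K_{4})$‑minor and is therefore series‑parallel, completing the ``if'' direction.

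The crux, and the step I expect to be the main obstacle, is certifying that $M(K_{4})$ is not a positroid — a point that is subtle precisely because being a positroid is not an isomorphism invariant but depends on the cyclic order of the ground set, so one must exclude all orderings. The approach I would take is to use the Postnikov--Oh correspondence between positroids on $[n]$ and Grassmann necklaces (equivalently, decorated permutations): for a given cyclic order on the six edges of $K_{4}$, form the tuple $(I_{1},\dots,I_{6})$ of lexicographically minimal bases in the six shifted orders and the associated intersection $\bigcap_{i} M_{I_{i}}$ of Schubert matroids; then $M(K_{4})$ is a positroid for that order exactly when this intersection equals $M(K_{4})$. One checks that for every cyclic order the intersection is strictly larger — it always gains one of the four triangles of $K_{4}$ as a basis — so $M(K_{4})$ is a positroid for no cyclic order. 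Because $M(K_{4})$ carries the action of $S_{4}$ on its four triangles, this reduces to a short finite verification; alternatively one may simply cite that $M(K_{4})$ is a standard example of a non‑positroid matroid. Everything else then follows formally by combining Brylawski's theorem with the minor‑closedness of positroids, and this reformulation in terms of the single forbidden uniform minor $\U_{2,4}$ is exactly what feeds into condition~(2) of Theorem~\ref{thm:posit_clasif}.
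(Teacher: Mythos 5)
Your argument is correct and is essentially the proof of this statement in the source the paper is quoting (Speyer--Williams), since the paper itself only recalls the lemma without proof: one combines Brylawski's excluded-minor characterization of connected series-parallel matroids (no $\U_{2,4}$ and no $M(K_{4})$ minor) with the minor-closedness of positroids and the fact that $M(K_{4})$ is not a positroid for any cyclic ordering of its ground set. The only delicate point is the last one, and you correctly flag that it must be excluded for all orderings; this is a standard finite verification (or citation), so there is no gap.
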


In light of these results, we provide results about positroidal subdivisions of $\Delta(k,n)$ obtained from LPM.  We begin with our first technical result concerning snakes,

\begin{lemma}\label{lem:Snakes_are_series-parallel}
Snakes are series-parallel matroid.    
\end{lemma}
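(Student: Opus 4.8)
The plan is to combine the structural characterization of series-parallel matroids given in Lemma \ref{cor:series_paraller_minor} with the combinatorial description of snakes as border-strip lattice path matroids. Since a snake $\M[P,Q]$ is connected by definition, it suffices by Lemma \ref{cor:series_paraller_minor} to show that a snake has no $\U_{2,4}$ minor. I would argue by contradiction: suppose $\U_{2,4}$ arises as a minor, i.e.\ $\U_{2,4} = \M[P,Q]/I \setminus J$ for disjoint $I, J \subseteq [n]$ with $|I|$ equal to the rank drop. The key input is that the class of lattice path matroids is minor-closed, and more precisely that minors of a snake are again (direct sums of) snakes — deleting or contracting an element of $\M[P,Q]$ corresponds to removing the corresponding column or row strip from the skew shape $J$, and a sub-skew-shape of a border strip that contains no interior lattice point still contains no interior lattice point. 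So any connected minor of a snake is again a snake.

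The second ingredient is a direct check that the uniform matroid $\U_{2,4}$ is \emph{not} a snake. This is essentially a dimension/shape count: the matroid polytope of $\U_{2,4}$ is the octahedron (the hypersimplex $\Delta(2,4)$), which has $\binom{4}{2}=6$ bases, whereas a snake on $n=4$ elements of rank $2$ is a minimal connected matroid and hence has only $n - 1 = 3$ bases by the minimality property recalled just before the statement (snakes have the minimal number of bases among connected rank-$r$ matroids on $n$ elements, namely $r(n-r)+1$, which for $r=2, n=4$ gives $5$, and in any case strictly fewer than $6$). Alternatively, and perhaps more cleanly, one observes that the skew shape associated to $\U_{2,4}$ — the full $2 \times 2$ square — does contain an interior lattice point, so it violates the defining condition of a snake. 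Either way, $\U_{2,4}$ is excluded.

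Putting these together: if $\U_{2,4}$ were a minor of the snake $\M[P,Q]$, then since $\U_{2,4}$ is connected it would itself have to be a snake (being a connected minor of a snake), contradicting the previous paragraph. Hence no snake has a $\U_{2,4}$ minor, and since snakes are connected, Lemma \ref{cor:series_paraller_minor} gives that every snake is series-parallel.

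The main obstacle I anticipate is making rigorous the claim that a connected minor of a snake is a snake — i.e.\ establishing cleanly that lattice path matroids are minor-closed in the appropriate sense and tracking what deletion and contraction do to the bounding paths $P, Q$ and the skew shape $J$. This is known (it appears in the work of Bonin–de Mier and is used in \cite{knauer2018tutte, bidkhori2012lattice}), so I would cite it rather than reprove it; the content then reduces to the easy observation that removing a row or column from a border strip cannot create an interior lattice point, together with the trivial verification that $\U_{2,4}$ fails the border-strip condition. A slicker alternative that sidesteps the minor-closure bookkeeping entirely is to use the polytope count directly: $\U_{2,4}$ has $6$ bases while by minimality any connected matroid that is a "small enough" quotient of a snake still has the minimal base count of a snake on its ground set, which is never $6$ in rank $2$ on $4$ elements — but this requires knowing that minors of snakes stay connected-or-snake-like, so one does not fully escape the structural lemma. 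I would therefore present the proof via excluded minors as above.
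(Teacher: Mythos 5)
Your proof takes essentially the same route as the paper's: both reduce to Lemma \ref{cor:series_paraller_minor} and exclude $\U_{2,4}$ as a minor of a snake by observing that its lattice path presentation (the full $2\times 2$ square) contains an interior lattice point while a snake's skew shape does not. The paper is in fact terser — it simply asserts that $\U_{2,4}$ has an interior lattice point and therefore cannot be a minor of a snake — so your explicit appeal to minor-closure of lattice path matroids fills in a step the paper leaves implicit (and the stray ``$n-1=3$ bases'' in your discarded alternative should just be deleted, since the correct minimal count is $r(n-r)+1=5$ as you note).
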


\begin{proof}
We acknowledge that the uniform matroid $\U_{2,4}$ is also an LPM as shown in Figure \ref{fig:LPM_(2,4)}.

\begin{figure}[H]
    \centering
    \includegraphics[scale=0.7]{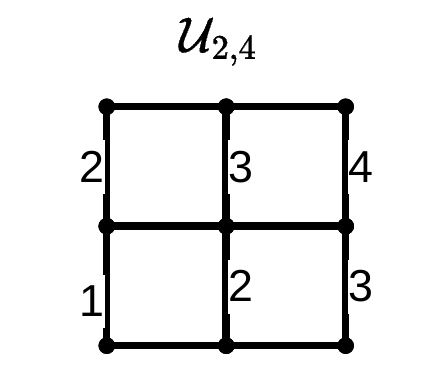}
    \caption{Uniform matroid $\U_{2,4}$ as a lattice path matroid}
    \label{fig:LPM_(2,4)}
\end{figure}

Clearly, $\U_{2,4}$ has an interior lattice point and therefore cannot be a minor of a lattice path matroid which is a snake. Therefore, by Lemma \ref{cor:series_paraller_minor} snakes are series-parallel matroid.

\end{proof}

We also acknowledge that another proof of this result is present in \cite[Proposition 5.14]{ferroni2022valuative}. With Lemma \ref{lem:Snakes_are_series-parallel} and Theorem \ref{thm:posit_clasif}, we state the following result

\begin{corollary}
Let $\Sigma$ be an LPM subdivision of $\Delta(k,n)$ such that the underlying matroid of each maximal cell is a snake. Then, $\Sigma$ is a finest positroidal subdivision of $\Delta(k,n)$ and has exactly $\binom{n-2}{k-1}$ facets.    
\end{corollary}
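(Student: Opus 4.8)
The plan is to combine the two pieces of machinery that have already been set up: Theorem~\ref{thm:posit_clasif}, which characterizes finest regular positroidal subdivisions of $\Delta(k,n)$, and Lemma~\ref{lem:Snakes_are_series-parallel}, which says snakes are series-parallel. First I would observe that $\Sigma$ is regular: by Theorem~\ref{thm:LPM_is_regular} every LPM subdivision of an LPM polytope is regular, and $\Delta(k,n)$ is an LPM polytope, so this applies. Next, $\Sigma$ is positroidal: each maximal cell is by hypothesis the matroid polytope of a snake, and snakes are lattice path matroids, hence positroids by Lemma~23 of \cite{oh2011positroids} (the theorem recalled above); so every facet of $\Sigma$ is a positroid polytope, which by Lemma~\ref{lem:lpmsubposdress} (or directly by \cite[Proposition 8.3]{arkani2021positive}) makes $\Sigma$ a regular positroidal subdivision.

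With regularity and positroidality in hand, the remaining step is to invoke the equivalence in Theorem~\ref{thm:posit_clasif}. Since every facet of $\Sigma$ is the matroid polytope of a snake, and snakes are series-parallel (Lemma~\ref{lem:Snakes_are_series-parallel}), condition~(2) of Theorem~\ref{thm:posit_clasif} is satisfied; therefore condition~(1) holds, i.e. $\Sigma$ is a finest subdivision of $\Delta(k,n)$. Finally, the count of facets follows immediately from the Corollary stating that every finest positroidal subdivision of $\Delta(k,n)$ has exactly $\binom{n-2}{k-1}$ facets.

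The one subtlety I would be careful about — and the place where a gap could hide — is the implicit claim that an LPM subdivision all of whose maximal cells are snakes actually exists, and more importantly that "maximal cell is a snake" is compatible with being a $\emph{full-dimensional}$ cell of a subdivision of $\Delta(k,n)$: a snake on $n$ elements of rank $k$ has a full-dimensional matroid polytope (it is a connected matroid on the right ground set), so this is fine, but I would state it explicitly to avoid confusion between "snake as an abstract matroid" and "its polytope as a maximal cell." Beyond that, the proof is essentially a bookkeeping assembly of prior results, so I do not expect a genuine obstacle; the only real content was already discharged in Lemma~\ref{lem:Snakes_are_series-parallel} and Theorem~\ref{thm:LPM_is_regular}.
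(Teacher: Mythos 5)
Your proposal is correct and follows the same route the paper takes (the paper states this corollary as an immediate consequence of Lemma~\ref{lem:Snakes_are_series-parallel} and Theorem~\ref{thm:posit_clasif}, leaving the regularity and positroidality checks implicit since they were established in Theorem~\ref{thm:LPM_is_regular} and Lemma~\ref{lem:lpmsubposdress}). Your version simply spells out those implicit steps, which is fine.
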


With Theorem \ref{thm:posit_clasif} and Lemma \ref{cor:series_paraller_minor}, we also are able to provide a partial answer to Question $6.2$ posed in \cite{olarte2019local},

\begin{question}[Question 6.2 \cite{olarte2019local}]\label{ques:indecompose}
Are all cells in the finest matroid subdivision of a hypersimplex, matroid polytopes of indecomposable matroids?
\end{question}

The authors show that the answer to this question is affirmative in the case when the hypersimplex is $\Delta(2,n)$ \cite[Proposition 5.3]{olarte2019local}. However, we know of explicit counterexamples provided in \cite{brandt2021tropical} which show that there exist finest matroidal subdivisions of certain hypersimplices, whose cells do not correspond to indecomposable matroids.

We state some technical definitions before stating the partial answer. We state the following classification for \emph{binary matroids}; which are matroids representable over the field with two elements.

\begin{theorem}[Tutte \cite{tutte1958homotopy}]
A matroid is said to be binary if and only if it has no minor isomorphic to the uniform matroid $\U_{2,4}$.   
\end{theorem}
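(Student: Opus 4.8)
The plan is to split the equivalence into an easy direction and a hard direction, with essentially all of the work in the latter. For the easy direction, I would first record that the class of binary matroids --- those representable by a matrix over $\FF_2$ --- is closed under minors: deletion of an element corresponds to discarding a column of a representing matrix, and contraction of a non-loop corresponds to pivoting on its column and then discarding a row and that column, so both operations preserve $\FF_2$-representability. Next I would observe that $\U_{2,4}$ is not itself binary, by counting: in any $\FF_2$-representation of a rank-$2$ matroid every element is a nonzero vector of $\FF_2^2$, and there are only three such vectors, all pairwise non-parallel, whereas $\U_{2,4}$ demands four elements any two of which form a basis (hence four pairwise non-parallel nonzero columns). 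Combining these two facts, no binary matroid can contain $\U_{2,4}$ as a minor.

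For the converse I would argue by induction on $|E(\M)|$, the crux being the following \emph{local lemma}: if $e\in E(\M)$ is such that both $\M\setminus e$ and $\M/e$ are binary, then either $\M$ is binary or $\M$ has a $\U_{2,4}$-minor. Granting this, the theorem is immediate: if $\M$ has no $\U_{2,4}$-minor then neither do $\M\setminus e$ and $\M/e$ (minor-closedness of "having no $\U_{2,4}$-minor"), so by the inductive hypothesis they are binary, and the lemma forces $\M$ binary. The base cases (few elements, or the presence of loops, coloops, or parallel elements which can be suppressed without affecting either hypothesis or conclusion) are routine, and since a matroid is binary precisely when each of its connected components is, one may assume $\M$ connected throughout.

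The substance is the proof of the local lemma. I would fix $\FF_2$-matrices $A_1$ and $A_2$ representing $\M\setminus e$ and $\M/e$ on the common ground set $E-e$. Using that a connected binary matroid is uniquely $\FF_2$-representable up to invertible row operations (there being no column-scaling freedom over $\FF_2$), I would bring $A_1$ and $A_2$ into a compatible normal form, so that $A_2$ arises from a putative representation of $\M$ on all of $E$ by pivoting on the column of $e$; this yields a candidate column $v$ for $e$. One then analyses the set of $v\in\FF_2^{r(\M)}$ for which $[A_1\mid v]$ represents $\M$ on $E$: every circuit of $\M$ avoiding $e$ must stay a circuit of $\M\setminus e$, while every circuit of $\M$ through $e$ must, after removing $e$, be a disjoint union of circuits of $\M/e$. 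These two families of constraints on $v$ either admit a common solution --- and then $\M$ is binary --- or they are jointly inconsistent, in which case I would isolate four elements of $\M$ (one being $e$) whose restriction-then-contraction is $\U_{2,4}$, exhibiting the forbidden minor.

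I expect the main obstacle to be exactly this last extraction: the bookkeeping of the circuits and cocircuits through $e$, the need to control connectivity so that the uniqueness-of-representation input applies (in practice, first reducing via series and parallel simplifications to the $3$-connected case and running a Tutte-style splitter/wheels-and-whirls argument), and finally the verification that a minimal obstruction is a rank-$2$, corank-$2$ matroid on four elements, i.e.\ $\U_{2,4}$. An alternative route I would keep in reserve, which relocates rather than removes the difficulty, is to use the criterion that $\M$ is binary iff $|C\cap C^{*}|$ is even for every circuit $C$ and cocircuit $C^{*}$ (equivalently, iff the symmetric difference of any two circuits is a disjoint union of circuits): the forward implication is transparent linear algebra over $\FF_2$, and one would then show that any minor-minimal non-binary matroid is connected, simple, and cosimple, and argue directly that such a matroid is $\U_{2,4}$ --- with the structural classification of the minimal obstruction again being the real work.
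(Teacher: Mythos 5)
First, a point of context: the paper does not prove this statement at all --- it is quoted verbatim as Tutte's classical excluded-minor characterization of binary matroids, with a citation to Tutte's 1958 homotopy paper, so there is no in-paper argument to compare yours against. Judged on its own terms, your proposal correctly and completely handles the easy direction: minor-closedness of $\FF_2$-representability via column deletion and pivoting, and the counting argument that $\U_{2,4}$ would require four pairwise non-parallel nonzero vectors in $\FF_2^2$ when only three exist. That half is fine.

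The converse, however, is an outline rather than a proof, and the gap sits exactly where you say it does. Your entire induction rests on the ``local lemma'' (if $\M\setminus e$ and $\M/e$ are both binary then $\M$ is binary or has a $\U_{2,4}$-minor), and that lemma is never established: the analysis of candidate columns $v$ for $e$, the claim that inconsistency of the circuit constraints forces four elements whose restriction-then-contraction is $\U_{2,4}$, and the appeal to unique $\FF_2$-representability of connected binary matroids are all asserted, not carried out. The same is true of your fallback route via the circuit--cocircuit parity criterion, where you yourself flag that classifying the minimal obstruction ``is again the real work.'' Identifying where the difficulty lives is not the same as resolving it; as written, neither route yields a complete proof of the hard direction. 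If you intend to supply a genuine proof rather than cite Tutte, the cleanest path is the standard one (e.g.\ as in Oxley's \emph{Matroid Theory}): take a minor-minimal non-binary matroid, show it is connected, simple and cosimple, and then pin down its rank and corank to conclude it is $\U_{2,4}$ --- but each of those steps needs to be written out, not gestured at.
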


\begin{definition}[Definition 5.2 \cite{olarte2019local}]
A matroid is said to be \emph{indecomposable} if and only if its polytope does not allow a non-trivial matroid subdivision.    
\end{definition}

Therefore, we obtain Corollary \ref{cor:indecompose} as an answer to Question \ref{ques:indecompose}, when restricted to the case of positroidal subdivisions of the hypersimplex.

\begin{corollary}\label{cor:indecompose}
The cells of the finest positroidal subdivision of $\Delta(k,n)$ correspond to binary matroids. In particular, they are indecomposable.    
\end{corollary}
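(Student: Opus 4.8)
The plan is to deduce Corollary~\ref{cor:indecompose} by combining the structural classification of finest positroidal subdivisions (Theorem~\ref{thm:posit_clasif}) with Tutte's characterization of binary matroids and the series-parallel criterion (Lemma~\ref{cor:series_paraller_minor}). First I would let $\Sigma$ be a finest positroidal subdivision of $\Delta(k,n)$ and let $\P_{\M}$ be an arbitrary maximal cell (facet) of $\Sigma$, so that $\M$ is a positroid by definition of a positroidal subdivision. Since a matroid polytope decomposes into connected components as a product, I would reduce to the case where $\M$ is connected: the polytope $\P_{\M}$ is the product of the polytopes of the connected components $\M_1,\dots,\M_s$ of $\M$, and $\M$ is binary (resp.\ series-parallel) if and only if each $\M_i$ is, so it suffices to treat connected $\M$.

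Next, by Theorem~\ref{thm:posit_clasif} (the equivalence of (1) and (2)), since $\Sigma$ is a finest subdivision every facet of $\Sigma$ is the matroid polytope of a series-parallel matroid; hence the connected positroid $\M$ is series-parallel. By Lemma~\ref{cor:series_paraller_minor}, a connected series-parallel matroid has no $\U_{2,4}$ minor. Then I would invoke Tutte's theorem (the excerpt's Theorem attributed to Tutte, \cite{tutte1958homotopy}): a matroid is binary if and only if it has no minor isomorphic to $\U_{2,4}$. Therefore $\M$ is binary. For the ``in particular'' clause, I would argue that a binary matroid cannot admit a nontrivial matroid subdivision of its base polytope: a nontrivial matroid subdivision must split some cell along a hyperplane whose induced pattern forces a $\U_{2,4}$-type octahedral face (equivalently, by the argument in \cite{speyer2021positive, olarte2019local} the minimal obstruction to indecomposability is the presence of a $\U_{2,4}$ minor, whose polytope is the octahedron $\Delta(2,4)$ and which is itself decomposable); since $\M$ is binary it has no such minor, so $\P_{\M}$ admits only the trivial subdivision, i.e.\ $\M$ is indecomposable in the sense of Definition~5.2 of \cite{olarte2019local}.

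The main obstacle I anticipate is the last step: making precise why ``binary'' (equivalently, no $\U_{2,4}$ minor) implies ``indecomposable.'' One clean route is to use the fact that every matroid subdivision of a matroid polytope is a matroid subdivision in which each octahedral 2-face $\Delta(2,4)$ (coming from a $\U_{2,4}$ minor) is either kept whole or subdivided, and a nontrivial subdivision must actually cut at least one such octahedron; since a binary matroid has no $\U_{2,4}$ minor, there is no octahedral face to cut, forcing triviality. I would need to cite or reprove the statement that the finest matroid subdivisions are detected on octahedra (this is essentially the content of condition (3) in Theorem~\ref{thm:posit_clasif} together with the hyperplane-split analysis of \cite{herrmannsplitting}), and check that ``no octahedral face'' genuinely rules out all nontrivial coarser subdivisions as well, not merely the finest ones — a short monotonicity argument (a nontrivial subdivision can be refined to a finest one, which would then have to subdivide an octahedron that does not exist) closes this gap. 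The rest of the proof is bookkeeping with connectedness and direct appeals to the already-stated results.
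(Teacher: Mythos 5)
Your proof of the main claim follows exactly the paper's route: Theorem~\ref{thm:posit_clasif} gives that every facet of a finest positroidal subdivision is series-parallel, Lemma~\ref{cor:series_paraller_minor} excludes a $\U_{2,4}$ minor, and Tutte's theorem then yields binarity; your reduction to connected components is a sensible extra precaution (Lemma~\ref{cor:series_paraller_minor} is stated for \emph{connected} positroids) that the paper glosses over. Where you genuinely go beyond the paper is the ``in particular, indecomposable'' clause: the paper's proof stops at ``binary'' and offers no argument for indecomposability at all. Your octahedron argument is the right one, but, as you yourself flag, it needs to be anchored correctly: Theorem~\ref{thm:posit_clasif}(3) concerns positroidal subdivisions of the ambient hypersimplex $\Delta(k,n)$, whereas indecomposability in the sense of Definition~5.2 of \cite{olarte2019local} quantifies over \emph{all} matroid subdivisions of the single cell $\P_{\M}$, not only positroidal or finest ones. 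The clean way to close this is the fact, cited elsewhere in the paper from \cite{olarte2019local}, that a matroidal subdivision is determined by its $3$-skeleton, combined with the observation that the only $3$-dimensional matroid polytope admitting a nontrivial matroid subdivision is the octahedron $\P_{\U_{2,4}}$, whose presence as a face of $\P_{\M}$ would exhibit a $\U_{2,4}$ minor; since $\M$ is binary no such face exists, so every matroid subdivision of $\P_{\M}$ is trivial. With that substitution your argument is complete and supplies a step the paper leaves unproved.
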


\begin{proof}
We know from  Theorem \ref{thm:posit_clasif} that maximal cells of the finest positroidal subdivisions of $\Delta(k,n)$ correspond to connected series-parallel positroids and by Lemma \ref{cor:series_paraller_minor} we know that they do not have $U_{2,4}$ as a minor and therefore are also binary matroids.    
\end{proof}

With Lemma \ref{lem:lpmsubposdress} it is clear that the corresponding fan structure for LPM subdivisions also resides as a subfan inside the positive Dressian

\begin{equation}\label{eq:subfan_LPM_pos__local_dres}
\text{LPMfan}(\M[P,Q])) \subseteq \text{Dr}^{+}(\M[P,Q])) =  \text{Trop}^{+}Gr(\M[P,Q]))
\end{equation}

\begin{equation}\label{eq:subfan_LPM_pos_dres}
\text{LPMfan}(\Delta(k,n)) \subseteq \text{Dr}^{+}(k,n) =  \text{Trop}^{+} Gr(k,n)
\end{equation}

Also, in \cite{arkani2021positive} a third fan structure on the positive Dressian ${\text{Dr}^{+}}(\M)$ is defined as the \emph{positive fan structure}. This fan structure is based on the underlying \emph{cluster algebra}, studied in detail in \cite{arkani2021positive}. We refer the reader to \cite{fomin2021introduction, marsh2014lecture} for basic details concerning cluster algebras. Our aim here is to highlight the third fan structure on the positive Dressian that is induced via these clusters, although they will emerge later again in our discussion concerning minimal positroids and positive configuration space in Section \ref{subsec:pos_conf_space}. We define the notion of a \emph{cluster} associated with a matroid \cite{arkani2021positive},

\begin{definition}\label{def:cluster}
A cluster $\mathcal{C}$ for a matroid $\M$ is a subset of $\M$ that indexes a seed in the cluster structure of the cluster algebra isomorphic to $\mathbb{C}[\Tilde{\pi}_{\M}]$, where $\mathbb{C}[\Tilde{\pi}_{\M}]$ is the coordinate ring associated to the positroid variety.     
\end{definition}

\begin{definition}
The \emph{positive fan structure} on ${\text{Dr}^{+}}(\M)$ is the fan whose cones are the images of the domains of linearity for a positive parameterization by a cluster $\mathcal{C}$. Two points lie in the same cone of ${\text{Dr}^{+}}(\M)$, if they determine the same common domains of linearity for all the functions $p_{J}, J \in \M$. 
\end{definition}

The authors in \cite{arkani2021positive} also prove that this new fan structure coincides with the previous two fan structures

\begin{theorem}[Theorem 10.3 \cite{arkani2021positive}]
The three fan structures on ${\Dr^{+}}(\M)$ coincide.    
\end{theorem}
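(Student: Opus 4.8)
The plan is to deduce the statement from the already-established coincidence of the first two structures together with a comparison of the positive fan structure against the Pl\"ucker one. First I would record that the Pl\"ucker and secondary fan structures agree on the whole local Dressian $\Dr(\M)$ by \cite[Theorem 4.1]{olarte2019local}, and that $\Dr^{+}(\M)$ is a subfan of $\Dr(\M)$ in either structure: by the local form of \cite[Theorem 4.3]{speyer2021positive} (see \cite[Proposition 8.3]{arkani2021positive}), whether $w$ lies in $\Dr^{+}(\M)$ depends only on whether the subdivision of $\P_{\M}$ induced by $w$ is positroidal, which is constant on each cone of either structure. Hence the Pl\"ucker and secondary fan structures still agree on $\Dr^{+}(\M)$, and it remains to show that the positive fan structure equals the Pl\"ucker fan structure restricted to $\Dr^{+}(\M)$; since the two are fans with the same support, it suffices to prove that each refines the other.

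Next I would set both up concretely. Fix a cluster $\mathcal{C}$ for $\M$. By the Laurent phenomenon together with positivity of cluster-algebra Laurent expansions, each Pl\"ucker coordinate $p_{J}$ with $J\in\M$ is a Laurent polynomial with nonnegative coefficients in the cluster variables $(p_{C})_{C\in\mathcal{C}}$; using the positive parameterization attached to $\mathcal{C}$ to coordinatize $\Dr^{+}(\M)$ by $\RR^{\mathcal{C}}$, the coordinate $p_{J}$ becomes a piecewise-linear function $f_{J}$ on $\RR^{\mathcal{C}}$ (the tropicalization of that Laurent polynomial), and the positive fan structure is by definition the common refinement of the domains of linearity of the functions $\{f_{J}:J\in\M\}$. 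The Pl\"ucker fan structure, on the other hand, is cut out by the tropicalized three-term relations: for a $(k-2)$-set $S$ and $a<b<c<d$ in its complement the positive form $p_{Sac}\,p_{Sbd}=p_{Sab}\,p_{Scd}+p_{Sad}\,p_{Sbc}$ is subtraction-free (and non-trivial only when $Sac,Sbd\in\M$), so on $\Dr^{+}(\M)$ one has identically $f_{Sac}+f_{Sbd}=\min(f_{Sab}+f_{Scd},\,f_{Sad}+f_{Sbc})$ and the wall contributed by this relation is the locus where that minimum is attained twice.

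One refinement comes for free. On a cone $\tau$ of the positive fan structure every $f_{J}$, $J\in\M$, is linear, so for each three-term relation the minimum above --- being the linear function $f_{Sac}+f_{Sbd}$ on $\tau$ --- is attained by a set of arguments that does not vary over the relative interior of $\tau$; hence the combinatorial data of every three-term relation, and with it the positroidal subdivision induced on $\tau$, is constant, so $\tau$ lies in a single cone of the Pl\"ucker fan structure. For the reverse refinement one must show that each $f_{J}$, $J\in\M$, is already linear on every cone of the Pl\"ucker fan structure. I would prove this by induction on the mutation distance from $\mathcal{C}$ of a seed in which $p_{J}$ is a cluster variable: the base case $J\in\mathcal{C}$ is a coordinate function, and for the inductive step one writes the exchange relation producing $p_{J}$ as $p_{J}\,p_{J^{*}}=M_{1}+M_{2}$, so that $f_{J}=\min(\widehat{M_{1}},\widehat{M_{2}})-f_{J^{*}}$ with $\widehat{M_{i}}$ the linearization of the monomial $M_{i}$; by the inductive hypothesis $f_{J^{*}}$ and $\widehat{M_{1}},\widehat{M_{2}}$ are linear on every Pl\"ucker cone, so it remains to see that $\widehat{M_{1}}-\widehat{M_{2}}$ has constant sign there.

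This last point is where I expect the genuine difficulty to lie. When the exchange relation is itself a three-term Pl\"ucker relation this is immediate, because then the sign of $\widehat{M_{1}}-\widehat{M_{2}}$ is precisely the combinatorial datum of that relation, which is constant on a Pl\"ucker cone by definition; the general positroid case needs the explicit description of the seeds and exchange relations of the coordinate ring $\CC[\widetilde{\pi}_{\M}]$ of the positroid variety (cf.\ \cite{arkani2021positive} and the references therein) in order to reduce an arbitrary exchange relation to this situation. Equivalently, the remaining work can be packaged as the assertion that the full-dimensional domains of linearity of the positive parameterization are exactly the cones of the finest positroidal subdivisions of $\P_{\M}$ --- characterized, at least when $\P_{\M}=\Delta(k,n)$, by Theorem~\ref{thm:posit_clasif} as the subdivisions in which every octahedron is itself subdivided --- combined with the fact that a polyhedral fan is determined by its set of maximal cones.
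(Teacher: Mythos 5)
The paper offers no proof of this statement: it is imported verbatim as Theorem 10.3 of \cite{arkani2021positive}, so there is nothing in the text to compare your argument against line by line. Judged on its own, your outline follows the same general strategy as the source (reduce to comparing the positive fan structure with the Pl\"ucker one, after noting that the Pl\"ucker and secondary structures already agree on $\Dr(\M)$ by \cite[Theorem 4.1]{olarte2019local} and restrict to the subfan $\Dr^{+}(\M)$), and the ``free'' direction --- that a domain of linearity of all the $f_{J}$ is contained in a single Pl\"ucker cone because the achieving terms of each tropicalized three-term relation cannot change where everything is linear --- is correct as you state it.

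The genuine gap is exactly where you place it, and it is not a small one: your induction on mutation distance only closes if every exchange relation you ever need to invoke is itself a three-term Pl\"ucker relation, since that is the only case in which ``$\widehat{M_{1}}-\widehat{M_{2}}$ has constant sign on a Pl\"ucker cone'' comes for free from the definition of that cone. For a general cluster algebra this is false, and for $\CC[\widetilde{\pi}_{\M}]$ it requires the specific input that the positroid variety carries a cluster structure whose seeds are indexed by reduced plabic graphs, whose cluster variables in those seeds are Pl\"ucker coordinates (the weakly separated collections of \cite{oh2015weak}, as in Definition \ref{def:cluster}), and whose mutations at square moves realize precisely the three-term relations; one must also know that these Pl\"ucker seeds are mutation-connected, so that the induction reaches every $p_{J}$ with $J\in\M$ without ever leaving the class of Pl\"ucker cluster variables. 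None of this is supplied in your sketch, and your fallback reformulation in the last sentence (identifying maximal domains of linearity with finest positroidal subdivisions via Theorem \ref{thm:posit_clasif}) is a restatement of what needs to be proved rather than an argument for it. So the proposal is a correct skeleton with the load-bearing step left open.
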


With the sub-fan relation in place \ref{eq:subfan_LPM_pos__local_dres} 

\begin{corollary}
The three fan structures on $\text{LPMfan}(\P_{\M[P,Q]})$ coincide.    \end{corollary}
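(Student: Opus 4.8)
The plan is to deduce this corollary directly from the two results that precede it, namely the subfan relation \eqref{eq:subfan_LPM_pos__local_dres} and Theorem 10.3 of \cite{arkani2021positive} (the statement that the three fan structures on $\Dr^{+}(\M)$ coincide). The key observation is that $\mathrm{LPMfan}(\P_{\M[P,Q]})$ is not merely abstractly contained in $\Dr^{+}(\M[P,Q])$, but sits inside it as a subfan: every cone of the LPMfan is a cone of the positive Dressian, and therefore each of the three fan structures on $\Dr^{+}(\M[P,Q])$ — the Pl\"ucker (tropical prevariety) structure, the secondary structure, and the positive/cluster structure — restricts to a fan structure on $\mathrm{LPMfan}(\P_{\M[P,Q]})$.

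First I would invoke Lemma \ref{lem:lpmsubposdress}, which tells us that every weight vector $w_\Sigma$ inducing an LPM subdivision $\Sigma$ of $\P_{\M[P,Q]}$ lies in $\Dr^{+}(\M[P,Q])$, and moreover that two such weight vectors lie in the same cone of $\mathrm{LPMfan}(\P_{\M[P,Q]})$ precisely when they induce the same LPM subdivision. Second, I would recall that in each of the three fan structures on $\Dr^{+}(\M[P,Q])$, two weight vectors lie in the same cone exactly when they induce the same regular (positroidal) subdivision of $\P_{\M[P,Q]}$: this is the content of the coincidence statements in \cite{olarte2019local} (Pl\"ucker $=$ secondary) and \cite{arkani2021positive} (positive $=$ the other two), all of which describe cones via equality of induced subdivisions. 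Third, since an LPM subdivision is in particular a positroidal subdivision, the cone-membership relation on $\mathrm{LPMfan}(\P_{\M[P,Q]})$ is the restriction of the cone-membership relation of each of the three fan structures on $\Dr^{+}(\M[P,Q])$; hence the induced subfan structures on $\mathrm{LPMfan}(\P_{\M[P,Q]})$ all agree, and the result follows.

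The main point requiring care — the only place where something could go wrong — is the claim that $\mathrm{LPMfan}(\P_{\M[P,Q]})$ is a \emph{subfan} and not just a subset, i.e. that the restriction of the fan structure of $\Dr^{+}(\M[P,Q])$ to the union of LPM cones is again a fan whose cones are exactly the LPM cones. This is guaranteed because the LPM cones are themselves cones of the secondary fan (being sets of weight vectors inducing a fixed regular subdivision), and by Theorem 4.1 of \cite{olarte2019local} these coincide with cones of the Pl\"ucker fan structure on the Dressian, and by \cite{arkani2021positive} with cones of the positive fan structure on $\Dr^{+}$; the face relations are inherited, so no new subdivision of cones is introduced. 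Beyond this, the argument is essentially a bookkeeping exercise: once the subfan relation \eqref{eq:subfan_LPM_pos__local_dres} is combined with the three-way coincidence on the ambient positive Dressian, the corollary is immediate.

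\begin{proof}
By Lemma \ref{lem:lpmsubposdress} and the subfan relation \eqref{eq:subfan_LPM_pos__local_dres}, the fan $\mathrm{LPMfan}(\P_{\M[P,Q]})$ is a subfan of $\Dr^{+}(\M[P,Q]) = \Trop^{+}\Gr(\M[P,Q])$, meaning every cone of $\mathrm{LPMfan}(\P_{\M[P,Q]})$ is a cone of $\Dr^{+}(\M[P,Q])$. In each of the three fan structures on $\Dr^{+}(\M[P,Q])$ --- the Pl\"ucker fan structure, the secondary fan structure, and the positive fan structure --- two weight vectors lie in the same cone if and only if they induce the same regular positroidal subdivision of $\P_{\M[P,Q]}$; this is precisely the content of \cite[Theorem 4.1]{olarte2019local} together with \cite[Theorem 10.3]{arkani2021positive}. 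Since an LPM subdivision is a regular positroidal subdivision, the cone structure of $\mathrm{LPMfan}(\P_{\M[P,Q]})$ is obtained by restricting the cone structure of each of these three fan structures on $\Dr^{+}(\M[P,Q])$ to the subfan consisting of those cones whose induced subdivisions are LPM subdivisions. As the three ambient fan structures coincide by \cite[Theorem 10.3]{arkani2021positive}, so do their restrictions, and therefore the three fan structures on $\mathrm{LPMfan}(\P_{\M[P,Q]})$ coincide.
\end{proof}
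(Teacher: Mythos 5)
Your argument is correct and follows the same route the paper takes: the paper offers no explicit proof, deducing the corollary immediately from the subfan relation \eqref{eq:subfan_LPM_pos__local_dres} together with the coincidence of the three fan structures on $\Dr^{+}(\M)$ from \cite[Theorem 10.3]{arkani2021positive}. Your write-up simply makes explicit the bookkeeping (cone membership being determined by the induced subdivision, and the restriction genuinely being a subfan) that the paper leaves implicit.
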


\begin{remark}
We also want to highlight that matroid decompositions are invariant under matroid duality, which is also reflected in our description of the LPMfan, meaning if a $k$-dimensional cone $C$ in the LPMfan$(\P_{\M[P,Q]})$ corresponds to an LPM decomposition $\M_{t}[P^{t},Q^{t}]$, then there exists a $k$-dimensional cone $C'$ such that it represents the LPM decomposition $\{\M_{t}^{*}[P^{t},Q^{t}]\}$, where $*$ represents the matroid dual. This fact can be verified in the case of $\Delta(3,6)$ from Figure \ref{fig:LPM_fan}.

\end{remark}

\section{Computations for LPM polytope \texorpdfstring{$\Delta(k,n)$} {TEXT} }\label{sec:comp}

In this section we look at some computational examples, concentrating on the case of $\Delta(k,n)$ for $k=3,4$ and $n=6,8$ respectively. We use \texttt{polymake} \cite{gawrilow2000polymake} for our computations. 

\subsection{Computations for LPM polytope $\Delta(3,6)$}

\begin{figure}
    \centering
    \includegraphics[scale=0.3]{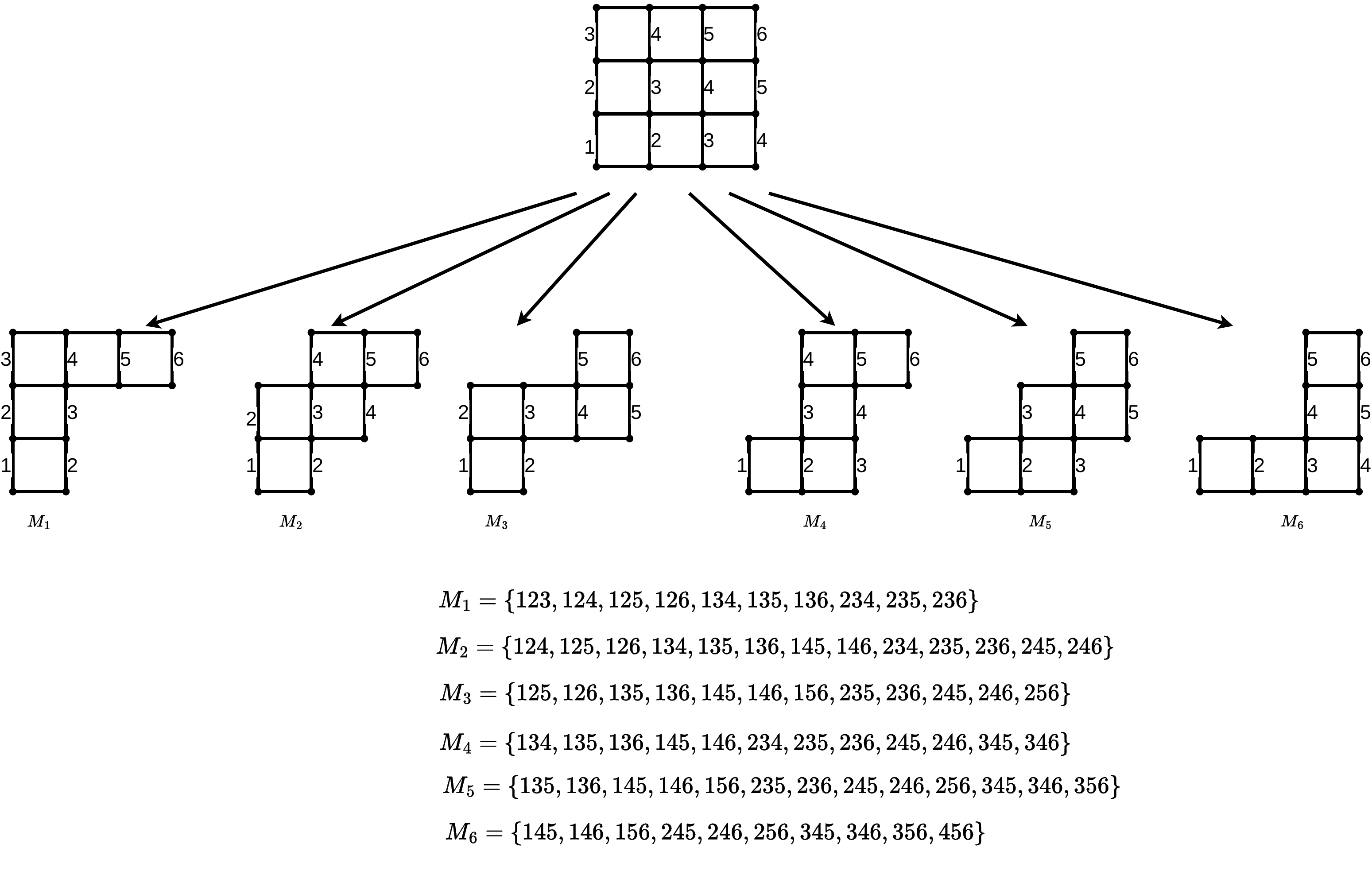}
    \caption{A LPM subdivision of $\Delta(3,6)$, which is also one of the finest  matroidal subdivisions, and hence indexes a maximal cone of $Dr(3,6)$.}
    \label{fig:LPM_(3,6)}
\end{figure}

Figure \ref{fig:LPM_(3,6)} illustrates an LPM  subdivision $\Sigma^{\LPM}$ of $\Delta(3,6)$ with the lattice path matroids corresponding to the maximal cells also shown. We also calculate the weight vector $w$ which induces this subdivision 

\[  w = \{ 0,0,0,0,0,0,0,1,1,2,0,0,0,1,1,2,2,2,3,5\}  \]

We illustrate the LPM polytope decomposition which corresponds to the subdivision in Figure \ref{fig:LPM_(3,6)} and we see the truncated LPM polytope after each iterative step of taking a hyperplane split decomposition in Figure \ref{fig:LPM_(3,6)_decomp}. 

\begin{figure}
    \centering
    \includegraphics[scale=0.1463]{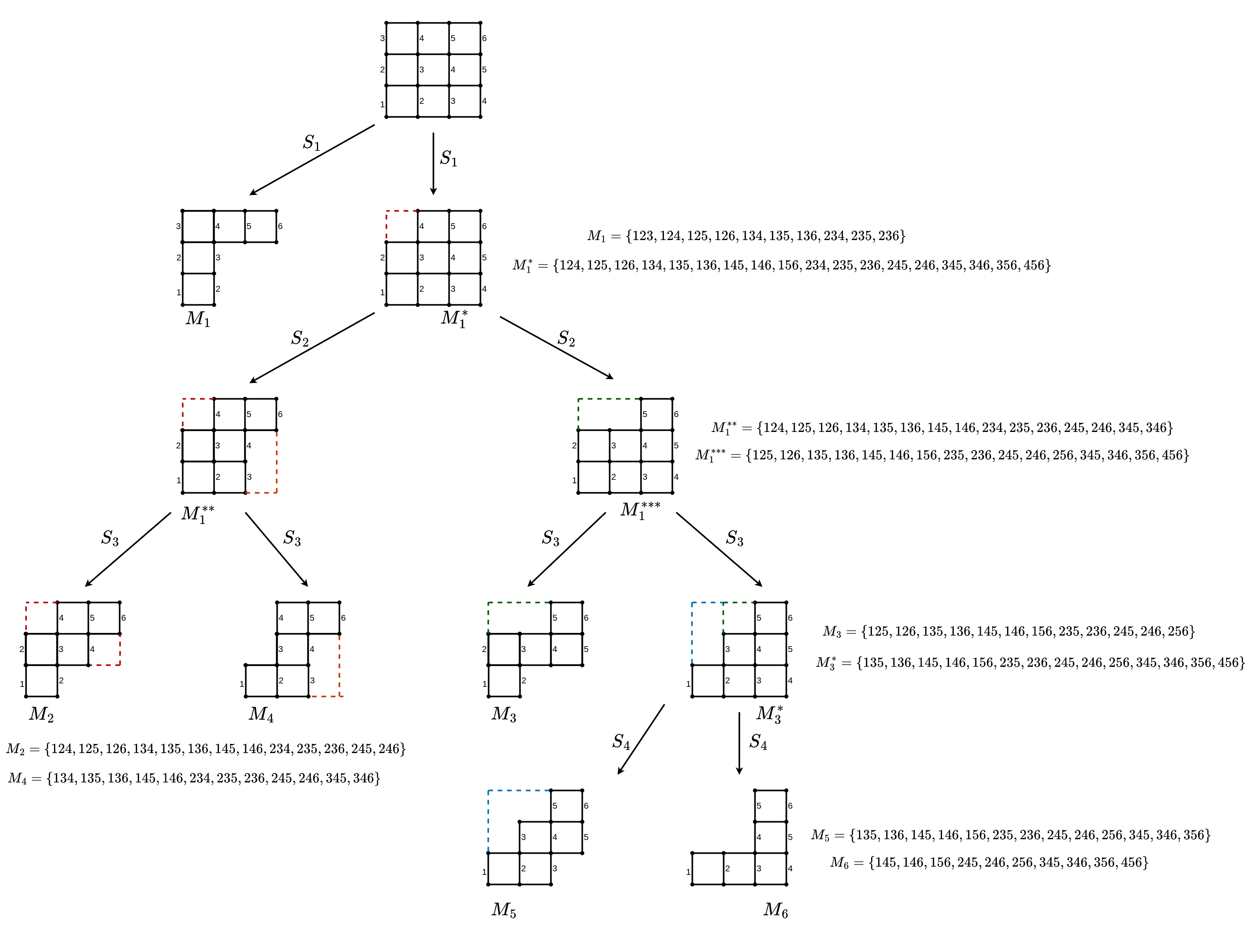}
    \caption{The LPM decomposition of $\U_{3,6}$, which corresponds to the LPM subdivision in Figure \ref{fig:LPM_(3,6)}. The dashed portions show the truncated parts of $\U_{3,6}$.}
    \label{fig:LPM_(3,6)_decomp}
\end{figure}

We also see that $\Sigma^{LPM}$ corresponds to a metric tree arrangement shown in Figure \ref{fig:LPM_(3,6)_metric_tree}.

\begin{figure}[H]
    \centering
    \includegraphics[scale=0.61]{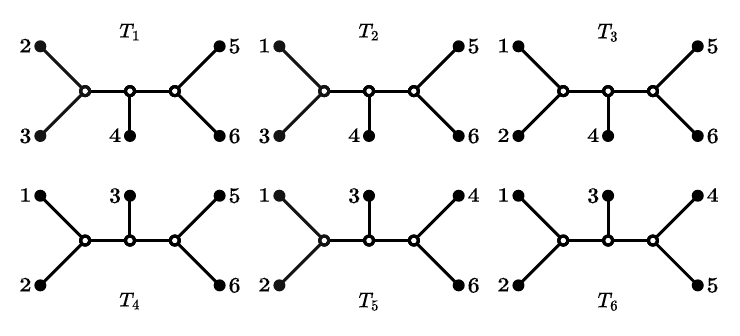}
    \caption{Metric tree arrangement corresponding to the LPM subdivision in Figure \ref{fig:LPM_(3,6)}}
    \label{fig:LPM_(3,6)_metric_tree}
\end{figure}

It is easy to see that under the permutation

\[ 1 \rightarrow 1,\quad 2 \rightarrow 5, \quad 3 \rightarrow 3, \quad 4 \rightarrow 2, \quad 5 \rightarrow 4, \quad 6 \rightarrow 6  \]

this tree arrangement permutes to the tree arrangement shown in Figure \ref{fig:Dr_(3,6)_Cone_4} which corresponds to Cone$_{4}$ \cite{tewari2022generalized} in the classification of all maximal cones of $Dr(3,6)$ \cite{herrmann2009draw}. 

\begin{figure}[H]
    \centering
    \includegraphics[scale=0.65]{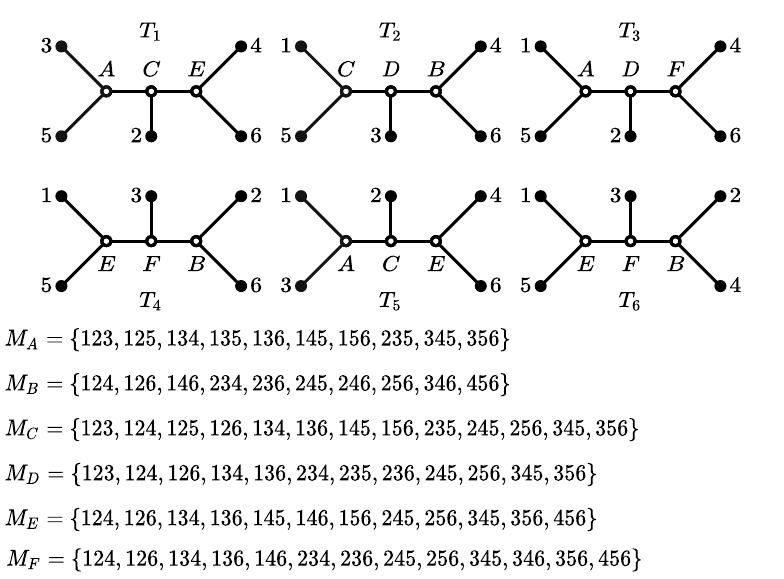}
    \caption{Metric tree arrangement and the matroidal subdivision corresponding to Cone$_{4}$ in $\Dr(3,6)$.}
    \label{fig:Dr_(3,6)_Cone_4}
\end{figure}

\subsection{Decorated permutations and reduced plabic graphs}

We now connect our computations to some other parameterization of the positive Grassmannian, namely \emph{decorated permutations} and \emph{reduced plabic graphs}, and we rely on \cite{m=2amplut} for most of our definitions in this subsection.

\begin{definition}
A \emph{decorated permutation} of $[n]$ is a bijection $\pi : [n] \rightarrow [n]$ whose fixed points are each colored either black or white. A black fixed point $i$ is denoted by $\pi(i) = \underline{i}$, and a white fixed point $i$ by $\pi(i) = \overline{i}$. An \emph{anti-excedance} of the decorated permutation $\pi$ is an element $i \in [n]$ such that either $\pi^{-1}(i) > i$ or $\pi(i) = i$. A decorated permutation on $[n]$ is of type $(k, n)$ if it has $k$ anti-excedances.    
\end{definition}

We now establish the connection between decorated permutations and positroid cells of the positive Grassmanians.

\begin{definition}
Given a $k \times n$ matrix $C = (c_{1} , \hdots , c_{n})$ written as a list of its columns,  a decorated permutation $\pi := \pi_{C}$ is associated to $C$ as follows. Set $\pi(i) := j$ to be the label of the first column $j$ such that $c_{i} \in \text{span} \> {c_{i+1} , c_{i+2} , \hdots , c_{j}}$. If $c_{i}$ is the all-zero vector, it is called a \emph{loop} and if $c_{i}$ is not in the span of the other column vectors, it is called a \emph{coloop}. The associated positroid cell to this decorated permutation is defined as

\[S_{\pi} = \{C \in \text{Gr}(k,n)^{\geq 0} | \pi_{C} = \pi \} \]
\end{definition}

Postnikov showed that $S_{\pi}$ is a cell, and that the positive Grassmannian $Gr(k,n)^{\geq 0}$ is the union of cells $S_{\pi}$ where $\pi$ ranges over decorated permutations of type $(k, n)$ \cite{https://doi.org/10.48550/arxiv.math/0609764}.

\begin{definition}
A \emph{plabic graph} is an undirected planar graph $G$ drawn inside a disk (considered modulo homotopy) with $n$ boundary vertices on the boundary of the disk, labeled $1, \hdots , n$ in clockwise order, as well as some internal vertices. Each boundary vertex is incident to a single edge, and each internal vertex is colored either black or white. If a boundary vertex is incident to a leaf (a vertex of degree 1), it is called a lollipop.    
\end{definition}

\begin{definition}
A \emph{perfect orientation} $\mathcal{O}$ of a plabic graph $G$ is a choice of orientation of each of its edges such that each black internal vertex $u$ is incident to exactly one edge
directed away from $u$; and each white internal vertex $v$ is incident to exactly one edge directed toward $v$. A plabic graph is called \emph{perfectly orientable} if it admits a perfect
orientation. Let $G_{\mathcal{O}}$ denote the directed graph associated with a perfect orientation $\mathcal{O}$ of
$G$. The \emph{source set} $I_{\mathcal{O}} \subseteq [n]$ of a perfect orientation $\mathcal{O}$ is the set of $i$ which are sources of the directed graph $G_{\mathcal{O}}$. Similarly, if $j \in \overline{I_{\mathcal{O}}} := [n] - I_{\mathcal{O}}$, then $j$ is a \emph{sink} of $\mathcal{O}$.   
\end{definition}

The following result links positroids with plabic graphs \cite{https://doi.org/10.48550/arxiv.math/0609764,m=2amplut}.

\begin{theorem}[Theorem 12.6 \cite{m=2amplut}]
Let $G$ be a plabic graph of type $(k, n)$. Then we have a
positroid $M_{G}$ on $[n]$ defined by

\[ M_{G} = \bigg\{ I_{\mathcal{O}} \> | \> \mathcal{O} \text{ \> is \> a \> perfect \> orientation \> of \>} G \bigg\} \]

where $I_{\mathcal{O}}$ is the set of sources of $\mathcal{O}$. Moreover, every positroid cell has the form $S _{M_{G}}$ for some plabic graph $G$.
\end{theorem}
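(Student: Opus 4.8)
The plan is to prove the two assertions separately, making $M_{G}$ concrete through Postnikov's \emph{boundary measurement map}. A preliminary fact I would record first is that every perfect orientation $\mathcal{O}$ of $G$ has exactly $k$ sources. This is seen by comparing two perfect orientations $\mathcal{O},\mathcal{O}'$: the set $E'$ of edges on which they disagree has, at every internal vertex, equal in- and out-degree with respect to $\mathcal{O}$ (this is forced by the perfect-orientation degree conditions), so $E'$ decomposes into directed cycles and into directed paths whose endpoints are boundary vertices; reversing $\mathcal{O}$ along such a path turns a boundary source into a boundary sink and a boundary sink into a source, leaving the total count unchanged. Hence the number of sources is an invariant of $G$, and it equals $k$ by definition of type $(k,n)$.

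Next I would fix one perfect orientation $\mathcal{O}_{0}$ with source set $I_{0}:=I_{\mathcal{O}_{0}}$, $|I_{0}|=k$, assign a positive real weight to each edge, and form the $k\times n$ boundary measurement matrix $A=A(\mathcal{O}_{0})$: index the rows by $I_{0}$, make $A_{I_{0}}$ the identity, and let the entry in row $i$, column $j$ be the sign-twisted generating function $\sum_{P}(-1)^{\mathrm{wind}(P)}\mathrm{wt}(P)$ over directed walks $P$ from $i$ to the boundary vertex $j$ in $G_{\mathcal{O}_{0}}$ (a rational function of the weights, polynomial when $G_{\mathcal{O}_{0}}$ is acyclic). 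The core of the matroid computation is then a Lindstr\"om--Gessel--Viennot argument: for $J\in\binom{[n]}{k}$ the maximal minor $\Delta_{J}(A)$ is, after the standard sign twist kills the cancellations coming from cycles, a subtraction-free expression whose numerator enumerates families of vertex-disjoint directed paths (``flows'') routing $I_{0}\setminus J$ to $J\setminus I_{0}$. Thus $\Delta_{J}(A)\neq 0$ exactly when such a flow exists. But a flow from $I_{0}\setminus J$ to $J\setminus I_{0}$ is precisely the data converting $\mathcal{O}_{0}$ into a perfect orientation with source set $J$: reverse every edge on the flow; each internal vertex met by a path is both entered and left by it, so the perfect-orientation conditions survive. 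Therefore $\{J:\Delta_{J}(A)\neq 0\}=M_{G}$, which shows $M_{G}$ is the matroid of $A\in\Gr^{\geq 0}(k,n)$ — hence a positroid — and that $A\in S_{M_{G}}$, with $\pi_{A}$ the decorated permutation attached to $M_{G}$. This gives the first assertion and identifies $S_{M_{G}}$ as the positroid cell of $M_{G}$.

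For the converse I would route through the combinatorial classification of positroid cells. By Postnikov's stratification, a positroid cell is $S_{\pi}$ for a decorated permutation $\pi$ of type $(k,n)$, equivalently is recorded by a $\Gamma$-diagram (Le-diagram) $D$ inside the $k\times(n-k)$ rectangle. To $D$ one attaches a canonical reduced plabic graph $G_{D}$ (a vertex in each ``$+$'' box, wired right and down to the boundary with the standard two-coloring); the orientation with all edges pointing right or down is perfect and has the source set prescribed by $D$. The remaining step is to show that the boundary measurement map of $G_{D}$, as a map from the positive edge/face weights, is a homeomorphism onto $S_{\pi}$; I would prove this by induction on the number of boxes of $D$, peeling off one boundary ``bridge'' (a BCFW-type move) at a time and matching it against the corresponding product decomposition of the cell in the $\Gamma$-parameterization. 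Once surjectivity onto $S_{\pi}$ is established, $M_{G_{D}}$ must be the positroid of $S_{\pi}$ and $S_{M_{G_{D}}}=S_{\pi}$, yielding the ``moreover''.

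The main obstacle is exactly this last surjectivity/parameterization statement: the first half is essentially LGV together with the flow/orientation dictionary and goes through mechanically, whereas proving that the boundary measurement map sweeps out the \emph{entire} positroid cell, and that every cell arises this way, is the technical heart of Postnikov's theory and relies on reducedness of plabic graphs, the square and contraction moves, and the $\Gamma$-diagram parameterization. Since the present paper uses the statement only as a black box, I would cite \cite{https://doi.org/10.48550/arxiv.math/0609764} (and the exposition in \cite{m=2amplut}) for that portion rather than reproduce it.
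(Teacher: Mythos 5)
The paper does not actually prove this statement: it is imported verbatim as Theorem 12.6 of \cite{m=2amplut} (ultimately Postnikov's theorem) and used as a black box, so there is no in-paper argument to compare yours against. That said, your outline is a faithful reconstruction of the standard proof: the invariance of the number of boundary sources under reorientation along the paths and cycles where two perfect orientations disagree; the boundary measurement matrix whose maximal minors admit a Lindstr\"om--Gessel--Viennot/Talaska flow expansion, so that $\{J : \Delta_J(A)\neq 0\}$ is exactly the set of source sets of perfect orientations (via the edge-reversal dictionary between flows and reorientations, which you justify correctly at both black and white vertices); and the decorated-permutation/Le-diagram route with bridge decompositions for the converse. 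The one step you leave unproved --- surjectivity of the boundary measurement map onto the full cell $S_{\pi}$, and hence that every positroid cell is some $S_{M_G}$ --- is indeed the technical core of Postnikov's theory, and deferring it to \cite{https://doi.org/10.48550/arxiv.math/0609764} is precisely what the present paper does for the entire theorem. In short: your proposal is correct as a proof sketch, is more detailed than anything in the paper, and honestly flags the only part that cannot be made self-contained at this length.
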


If a plabic graph $G$ is \emph{reduced} \cite{https://doi.org/10.48550/arxiv.math/0609764,fomin2021introduction} we have that $S_{M_{G}} = S_{\pi_{G}}$ , where $\pi_{G}$ is the decorated permutation defined as follows.

\begin{definition}
Let $G$ be a reduced plabic graph with boundary vertices $1, \hdots , n$. For each boundary vertex $i \in [n]$, we follow a path along the edges of $G$ starting at $i$, turning
(maximally) right at every internal black vertex, and (maximally) left at every internal white vertex. This path ends at some boundary vertex $\pi(i)$. The fact that $G$ is reduced implies that each fixed point of $\pi$ is attached to a lollipop; we color
each fixed point by the color of its lollipop. This defines a decorated permutation, called the \emph{decorated trip permutation} $\pi_{G} = \pi$ of $G$.    
\end{definition}

In \cite{oh2011positroids}, the following result elaborates on the way to compute the associated decorated permutations of an LPM.

\begin{theorem}[Theorem 25 \cite{oh2011positroids}]
Let I and J be two lattice paths starting at the origin and terminating at $(k,n-k)$, such that $I$ never crosses $J$. Let $I = \{i_{1} < \hdots < i_{k} \}$ and $J = \{ j_{1} < \hdots < j_{k} \} \in \binom{[n]}{k}$. Denote $[n] \setminus J =
\{d_{1} < \hdots < d_{n - k} \}$ and $[n] \setminus I = \{ c_{1} < \hdots < c_{n - k} \}$ . Then $\M[I,J]$ is a positroid and its decorated permutation $\pi_{\M[I,J]}$ is given by: 

\begin{align*}
  \pi_{\mathcal{M}[I,J]}(j_{r}) =  i_{r} \quad \forall r \in [k] \\
  \pi_{\mathcal{M}[I,J]}(d_{r}) =  c_{r} \quad \forall r \in [n-k] 
\end{align*}

if $\pi_{\mathcal{M}[I,J]}(t) = t$, then,

\[ \text{col}(t) = \left\{ 
  \begin{aligned}
  & -1 \quad \text{if} \>\>  t \in J \\ 
  &  1 \quad \quad \text{otherwise}
  \end{aligned}
\right\} \]
\end{theorem}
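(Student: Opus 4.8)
The plan is to compute the decorated permutation directly from the totally nonnegative matrix $A$ representing $\M[I,J]$ built in the proof of the previous theorem, via the rule defining $\pi_C$ from a representing matrix $C$ (recalled above): writing $c_1,\dots,c_n$ for the columns of $A$, the value $\pi_{\M[I,J]}(t)$ is the first $j$, in the cyclic order $t{+}1,t{+}2,\dots,n,1,\dots,t$, with $c_t\in\operatorname{span}\{c_{t+1},\dots,c_j\}$ (a zero column giving a white fixed point, a column not in the span of the rest a black one). The structural fact that makes this manageable is that column $c_t$ of $A$ is supported exactly on the row set $R_t=\{\ell\in[k]:i_\ell\le t\le j_\ell\}$, which is always an interval of $[k]$ because $i_1<\dots<i_k$ and $j_1<\dots<j_k$, and a given row $\ell$ occurs only for $t$ in the one window $[i_\ell,j_\ell]$; so as $t$ increases the support slides downward and rows reappear only after a cyclic wrap. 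From this one reads off immediately the loops (the $t$ with $R_t=\emptyset$, i.e.\ the common east steps of $I$ and $J$, which are the common values $c_r=d_r$) and the coloops (the common north steps $i_r=j_r$), and for both of these $\pi(t)=t$ with the asserted colour ($t\in J$ giving $\operatorname{col}=-1$, $t\notin J$ giving $+1$).

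For a non-coloop north step $t=j_r$ of $J$ the hypothesis that $I$ never crosses $J$ is used precisely as the inequality $i_r\le j_r$, which forces $r\in R_{j_r}$, so $c_{j_r}$ has a nonzero $r$-th coordinate. Since the $j_\ell$ are increasing, every column $c_{t'}$ with $j_r<t'\le n$ is supported on rows $>r$, and --- using the Vandermonde-type genericity of the exponents $x_i=x_{i-1}^{k^2}$ together with the staircase shape of the windows --- these columns span the coordinate subspace on rows $r{+}1,\dots,k$; hence $c_{j_r}$ is not yet in the span, and $\pi(j_r)$ must wrap past $n$. After the wrap the columns $c_1,\dots,c_s$ stay supported on rows $\le r-1$ as long as $s<i_r$, so the running span keeps missing the $r$-th coordinate; but at $s=i_r$ the columns $c_1,\dots,c_{i_r}$ contain the independent set $\{i_1,\dots,i_r\}$ (an initial segment of the basis $I$) and are supported on rows $\le r$, hence span the whole coordinate subspace on rows $1,\dots,r$, and the total span becomes $\KK^k$. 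Therefore $\pi(j_r)=i_r$. The remaining identity $\pi(d_r)=c_r$ then comes for free: the dual matroid $\M[I,J]^{\ast}$ is again a lattice path matroid, namely $\M[D,C]$ with $D=\{d_1<\dots<d_{n-k}\}$ and $C=\{c_1<\dots<c_{n-k}\}$ (here $d_r\le c_r$ because $I$ lies weakly above $J$), whose defining paths have north steps exactly at the $d_r$ and at the $c_r$, so the case already treated gives $\pi_{\M^{\ast}}(c_r)=d_r$; combining this with the mirror-symmetric version of the column computation (equivalently, the standard relation $\pi_{\M^{\ast}}=\pi_{\M}^{-1}$, with colours of fixed points interchanged, which matches loops $\leftrightarrow$ coloops) yields $\pi_{\M[I,J]}(d_r)=c_r$.

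Finally one checks the count: the $k$ north steps of $J$ account for all $k$ anti-excedances in the prescribed manner, so $\pi_{\M[I,J]}$ has type $(k,n)$, consistent with $\operatorname{rank}\M[I,J]=k$; together with the two cases and the loop/coloop dictionary this is exactly the asserted formula, and total nonnegativity of $\M[I,J]$ is already in hand. I expect the main obstacle to be the span computations in the middle paragraph: one must verify, in both directions, that the spans of the relevant blocks of columns of $A$ are precisely the claimed coordinate subspaces, which is exactly where the extreme growth of the exponents and the nestedness of the windows $[i_\ell,j_\ell]$ must be used carefully. A secondary point is pinning down the orientation convention so that ``$I$ never crosses $J$'' really is $i_r\le j_r$ (interchanging $I$ and $J$ would swap the two cases), and handling the degenerate wraps $j_r=n$ and $i_r=1$.
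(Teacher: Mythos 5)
The paper itself does not prove this statement: it is imported verbatim from Oh's work (Theorem 25 of \cite{oh2011positroids}) and used only as a tool to produce the decorated permutations in Figure \ref{fig:LPM_(3,6)_dec_perm}, so there is no in-paper argument to compare yours against. Assessing your sketch on its own terms: the strategy of reading the trip permutation directly off the Vandermonde-type matrix $A$ from the positroid representation, using the fact that column $c_t$ is supported on the row interval $R_t=\smallSetOf{\ell}{i_\ell\le t\le j_\ell}$, is sound, and it differs from the necklace-based route one would expect from the cited source (which works with Grassmann necklaces and cyclically shifted Schubert matroids). The core of your middle paragraph checks out: every column $c_{t'}$ with $t'>j_r$ vanishes in row $r$ while $c_{j_r}$ does not, so $\pi(j_r)$ must wrap; and after the wrap the span first captures $c_{j_r}$ at $s=i_r$ because $\{i_1,\dots,i_r,j_{r+1},\dots,j_k\}$ is a basis of $\M[I,J]$ (here $i_r\le j_r<j_{r+1}$ is exactly where non-crossing enters), so the corresponding columns are independent by positivity of that minor and span $\KK^k$.

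Three points need repair before this is a proof. First, a logical slip: ``these columns span the coordinate subspace on rows $r+1,\dots,k$; hence $c_{j_r}$ is not yet in the span'' --- the spanning statement is not what yields that conclusion; what you need (and already have) is that all those columns vanish in row $r$. Second, your identification of the loops as ``the common east steps of $I$ and $J$'' is wrong as an equivalence: a $t\notin I\cup J$ can still satisfy $i_\ell<t<j_\ell$ for some $\ell$ (e.g.\ $I=\{1,4\}$, $J=\{3,6\}$, $t=2$), so column $t$ is nonzero and $t$ is not a fixed point; the correct characterizations are $R_t=\emptyset$, equivalently $d_r=c_r=t$, and the inference from one to the other requires the short counting argument you skipped. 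Third, the step $\pi(d_r)=c_r$ via $\pi_{\M^*}=\pi_{\M}^{-1}$ is the weakest link: that relation is convention-sensitive and you do not verify it, whereas a direct column computation symmetric to the one you did for $j_r$ (the support of $c_{d_r}$ is again an interval, determined by the largest $\ell$ with $i_\ell\le d_r$) would close this case without invoking duality at all. With those repairs and the two rank verifications written out, your outline would constitute a complete and self-contained proof.
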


where ${col}()$ represents the coloring map for loop and co-loop elements of the permutation. Figure \ref{fig:LPM_(3,6)_dec_perm} lists the decorated permutations and the reduced plabic graphs corresponding to the snakes in the snake decomposition of $\U_{3,6}$ described in Figure \ref{fig:LPM_(3,6)}.

\begin{figure}
    \centering
    \includegraphics[scale=0.2]{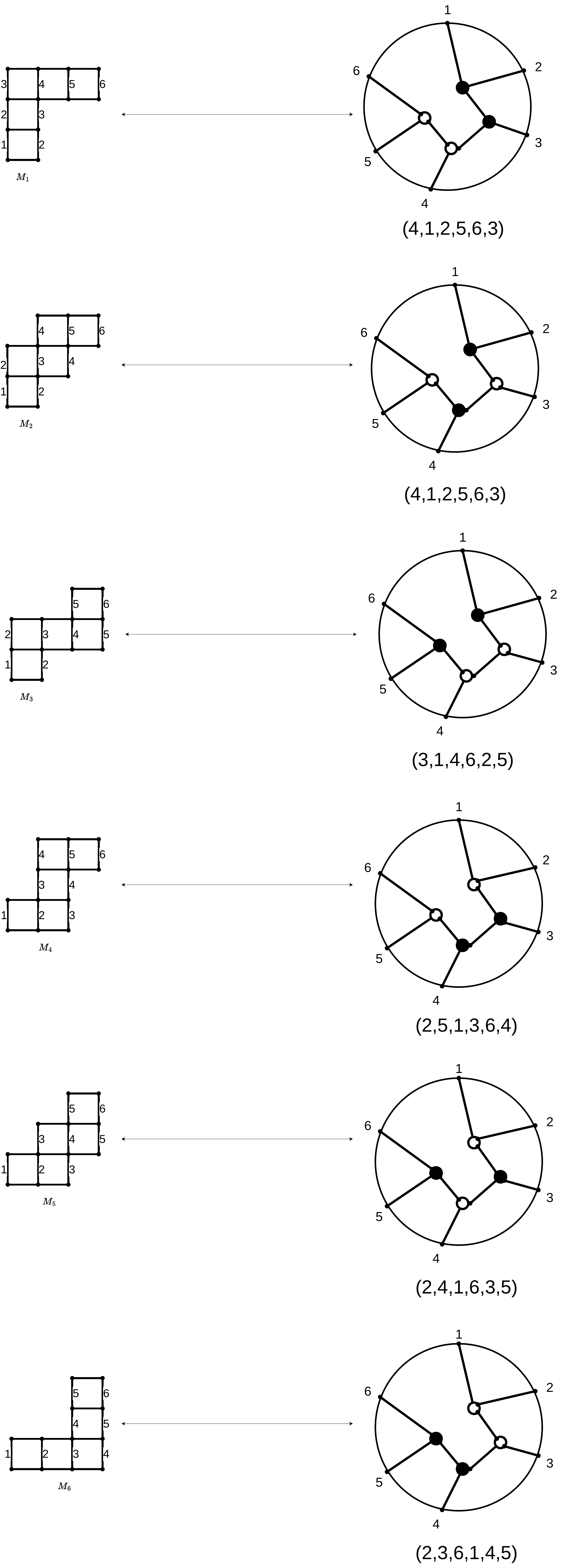}
    \caption{Decorated permutations and reduced plabic graphs corresponding to snakes of $\U_{3,6}$}
    \label{fig:LPM_(3,6)_dec_perm}
\end{figure}

\subsection{LPMfan(3,6)}

We first inspect the \emph{$f$-vector} of the fans associated to $\U_{3,6}$ \cite{herrmann2009draw}

\[  \text{$f$-vector} (\Dr(3,6)) = (1,65,535,1350,1005)  \]
\[  \text{$f$-vector} (\Trop(\Gr(3,6))) = (1,65,550,1395,1035)
  \]
 Out of the 65 rays of the Dressian $Dr(3,6)$, 35 correspond to splits and lie in the split complex, whereas the other 30 correspond to coarsest subdivisions of $\Delta(3,6)$ into three maximal cells.

Restricting to the positive tropical Grassmannian, we get the following vector \cite{arkani2021positive}, \cite{speyer2005tropical}, \cite{m=2amplut} where $F_{3,6}$ is the fan associated to $\text{Trop}^{+}(Gr(3,6))$.

\[  \text{f-vector} (F_{3,6}) = (1,16, 66, 98, 48)
  \]

Out of these 16 rays, five occur in the LPMfan in the form of $S_{1},S_{2},S_{3},S_{4}$ and $S_{5}$ which we see in Figure \ref{fig:LPM_fan}. The \emph{f-vector} of the LPMfan for $\Delta(3,6)$ is listed below where all cones are obtained as refinements of the five splits $S_{1},S_{2},S_{3},S_{4}$ and $S_{5}$, illustrated in Figure \ref{fig:LPM_fan}, where the edges between cones labeled signify the combination of the corresponding splits.

\[  \text{f-vector} (\text{LPMfan}(\Delta(3,6)) = (1,5,7,3,1)
  \]

The LPMfan(3,6) sits inside the Split subcomplex generated by the refinements of splits $S_{1},S_{2},S_{3},S_{4}$ and $S_{5}$. Also, to reiterate the cones are defined as secondary cones with rays defined by the corresponding splits, i.e., the collection of all the weight vectors which induce the same LPM subdivision lie in the same cone.

\begin{definition}\label{def:split_snake}
We refer to a lattice path matroidal subdivision which is a split with a snake as a maximal cell as \emph{snake split subdivision} and we refer to the snakes appearing in a snake split subdivision  as \emph{split snakes}.    
\end{definition}

\begin{figure}
    \centering
    \includegraphics[width=\textwidth,height=14cm, keepaspectratio]{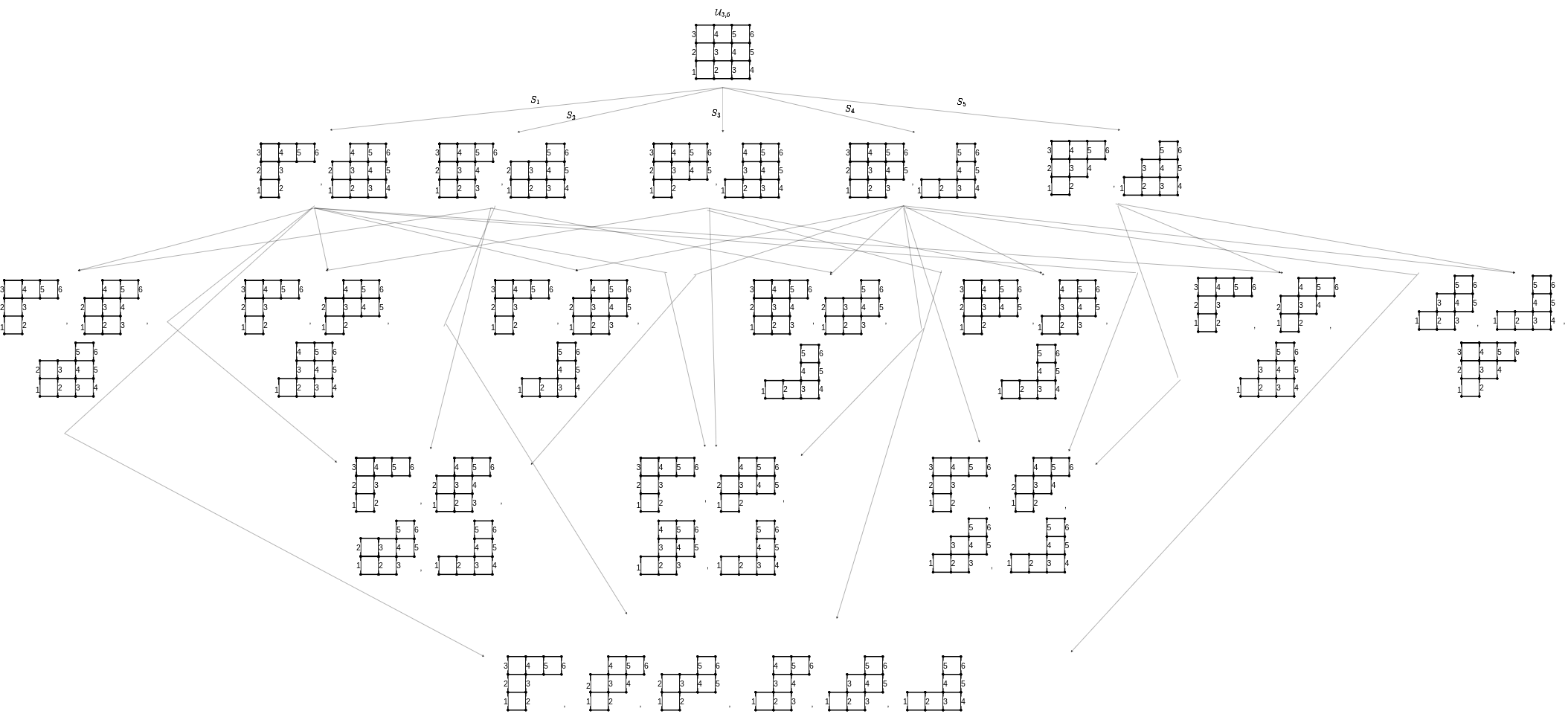}
    \caption{LPMfan(3,6).}
    \label{fig:LPM_fan}
\end{figure}

\begin{remark}
We point out the LPM decompositions for $\mathcal{U}_{(3,6)}$ other than the ones shown in Figure \ref{fig:LPM_(3,6)_decomp}, and these are depicted in Figure \ref{fig:SplitS'}, and one of the weight vectors inducing the split subdivision $S_{5}$ is the zero vector. 

\begin{figure}
    \centering
    \includegraphics[scale=0.23]{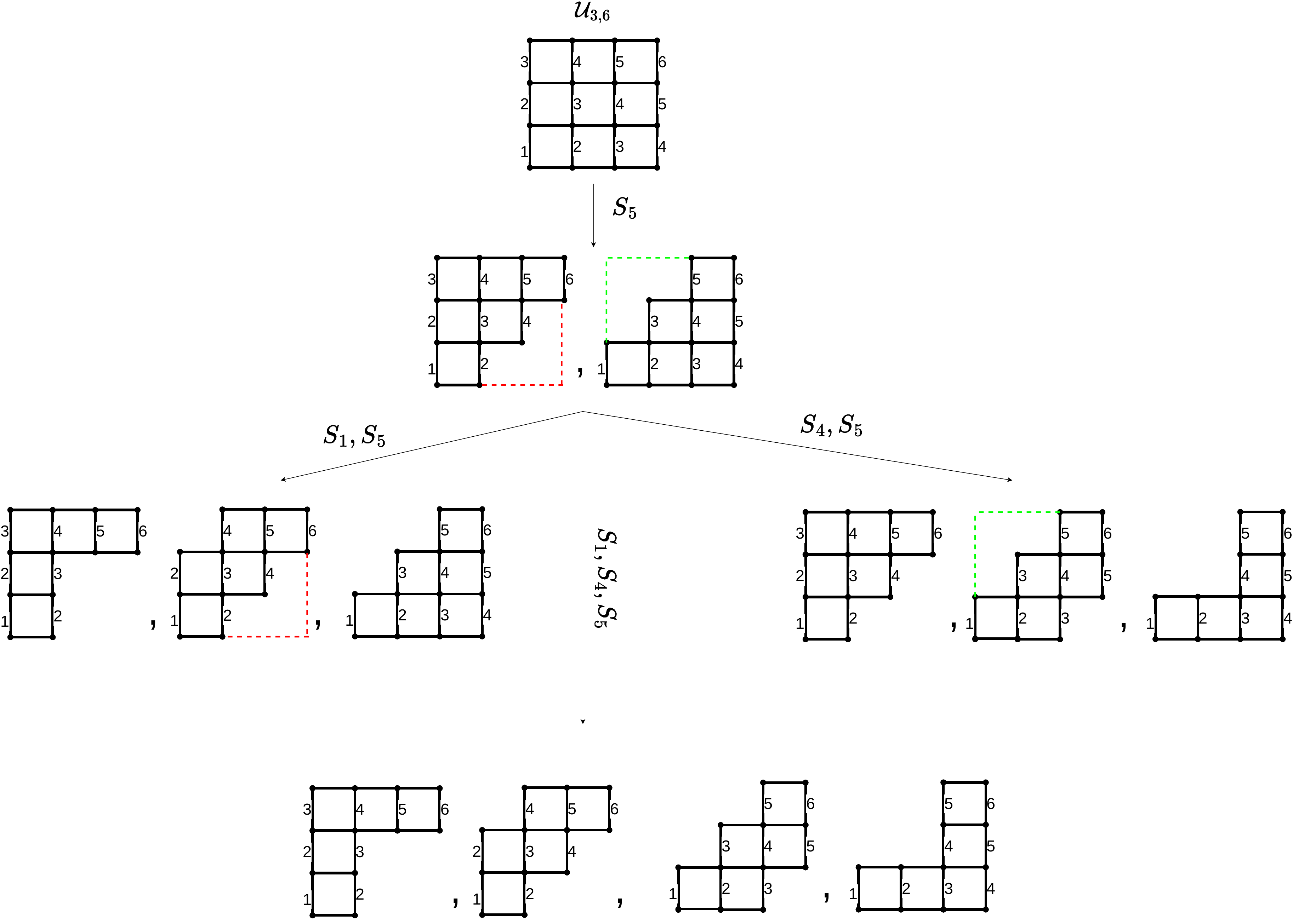}
    \caption{A split LPM decomposition $S'$ along with other decompositions that it corresponds to.}
    \label{fig:SplitS'}
\end{figure}
  
\end{remark}

\[ w_{S'} = \{0,0,0,0,0,0,0,0,0,0,0,0,0,0,0,0,0,0,0,0\} \]

\begin{remark}
We also want to point out to the reader that there exists a natural action of the symmetric group $S_{n}$ on the cones of the Dressian $Dr(\Delta(k,n))$ well documented in \cite{herrmann2009draw} and well described in their computations, and that is why with respect to this action there are only 7 maximal cells of $\Dr(3,6)$ \cite{herrmann2009draw}. Our description of the LPMfan implicitly incorporates this symmetry, for example, the weight vectors 

 \[w_{1} = \{0, 0, 0, 0, 0, 0, 0, 0, 0, 0, 0, 0, 0, 0, 0, 0, 1, 1, 1, 1\}\]

 and 

 \[ w_{2} = \{1, 1, 1, 1, 0, 0, 0, 0, 0, 0, 0, 0, 0, 0, 0, 0, 0, 0, 0, 0\} \]

 both induce the split $S_{3}$, but we know that both of them are equivalent under the action of $S_{6}$.
\end{remark}

\subsection{Computations for LPM polytope $\Delta(4,8)$}

\begin{figure}
    \centering
    \includegraphics[scale=0.2]{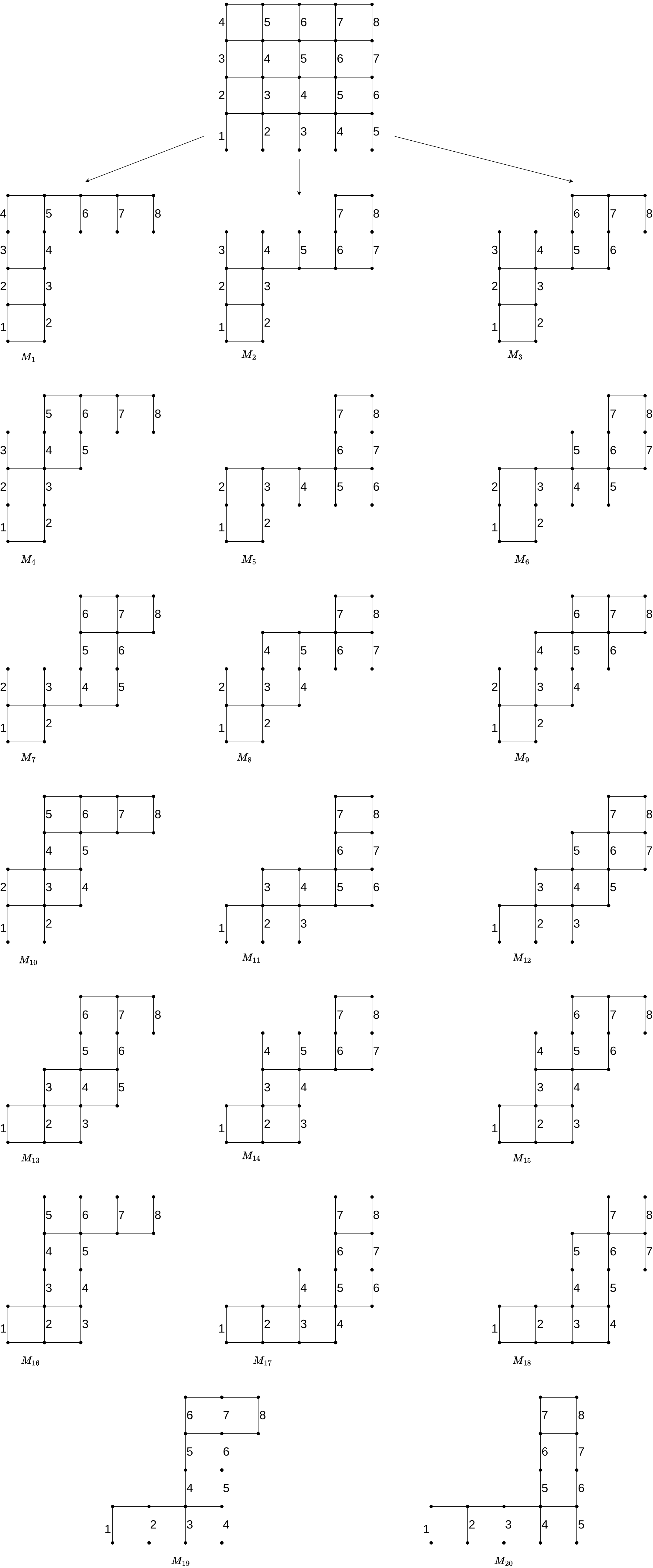}
    \caption{A LPM decomposition of $\U_{4,8}$ which corresponds to the LPM subdivision in Figure \ref{fig:LPM_(4,8)_subdiv} and indexes a maximal cone of $\Dr^{+}(4,8)$.}
    \label{fig:LPM_(4,8)}
\end{figure}

The subdivision is described in Figure \ref{fig:LPM_(4,8)} and Figure \ref{fig:LPM_(4,8)_subdiv} and this subdivision is induced by the weight vector

\begin{figure}
    \centering
    \includegraphics[scale=0.345]{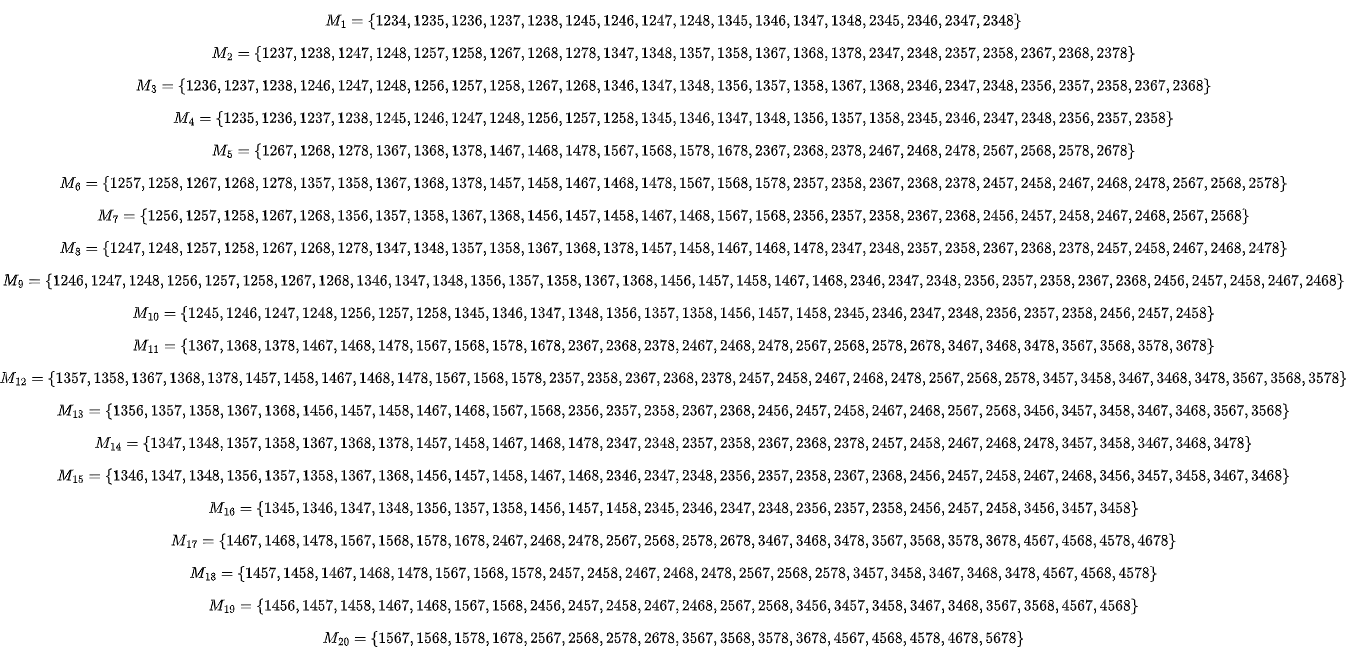}
    \caption{The LPM subdivision of $\Delta(4,8)$, corresponding to the decomposition in Figure \ref{fig:LPM_(4,8)}.}
    \label{fig:LPM_(4,8)_subdiv}
\end{figure}

\begin{multline*} 
  w = \{ 0,0,0,0,0,0,0,0,0,1,1,1,2,2,3,0,0,0,0,1,1,1,2,2,3,2, \\ 2,2,3,3,4,5,5,6,8,0,0,0,0,1,1,1,2,2,3,2,2,2,3, \\
3,4,5,5,6,8,3,3,3,4,4,5,6,6,7,9,8,8,9,11,14 \}   
\end{multline*}

A subsequent computation for the LPMfan$(\Delta(4,8))$ is more intricate than the computation of LPMfan$(\Delta(3,6))$ and hence we leave it for future work as we believe it would be nice to utilize the symmetric group action also into the computation to produce bigger examples.

All the files containing the code used for all these computations can be found at the following link

\begin{center}
    \url{https://github.com/Ayush-Tewari13/LPM_SUBDIVISIONS}
\end{center}

\section{Amplituhedron and Positive configuration spaces}\label{sec:Sec6}

We now describe an important implication of our results and connections to topics in Physics, which in recent times have gained immense interest. In \cite{arkani2014amplituhedron}, Arkani-Hamed et. al introduced the notion of the \emph{amplituhedron} which is obtained from the positive Grassmannian via the \emph{amplituhedron map}. It has been noted that the amplituhedron encodes information concerning scattering amplitudes in $\mathcal{N}=4$ super Yang-Mills theory, which in turn explains the etymology of the term. In \cite{m=2amplut}, the authors introduce the notion of \emph{positroid dissections} for the hypersimplex $\Delta(k+1,n)$ and the \emph{Grasstopes dissection} for the amplituhedron and explain the ways in these two dissections can be related via a duality map. 

We begin with the definition of amplituhedron \cite{arkani2014amplituhedron}, \cite{m=2amplut},

\begin{definition}
For $a \leq b$, define $\text{Mat}^{>0}_{a,b}$ as the set of real $a \times b$ matrices whose $a \times a$ minors are all positive. Let $Z \in \text{Mat}^{>0}_{n,k+m}$. The amplituhedron map $\overline{Z} : \text{Gr}(k,n)^{\geq 0} \rightarrow \text{Gr}(k,k+m)$ is defined by $\overline{Z} := CZ$, where $C$ is a $k \times n$ matrix representing an element of $\text{Gr}(k,n)^{\geq 0}$ and $CZ$ is a $k \times (k + m)$ matrix representing an element of $\text{Gr}(k,k+m)$ . The \emph{amplituhedron} $\mathcal{A}^{\geq 0}_{n,k,m}(Z) \subseteq \text{Gr}(k,k+m)$ is the image $\overline{Z}(\text{Gr}(k,n)^{\geq 0})$.    
\end{definition}

We briefly state some of the results from \cite{m=2amplut} to sketch the outline of their discussion,

\begin{definition}\label{def:pos_dissetion}
Let $\mathcal{C} = \{ \Gamma_{\pi}\}$ be a collection of positroid polytopes, and let $S_{\pi}$ be the collection of corresponding positroid cells. $\mathcal{C}$ is a \emph{positroid dissection} of $\Delta(k,n)$ if 

\begin{itemize}
    \item  dim$(\Gamma_{\pi}) = n-1$ for each $\Gamma_{\pi} \in \mathcal{C}$
    \item pairs of two distinct positroid polytopes $\Gamma^{o}_{\pi} = \mu(S_{\pi})$ and $\Gamma^{o}_{\pi'} = \mu(S_{\pi'})$ are pairwise disjoint, and 
    \item $\cup_{\pi} \Gamma = \Delta(k,n)$.
\end{itemize}
\end{definition}
    
\begin{definition}
Let $A$ be a $k \times n$ matrix representing a point in $Gr(k,n)^{\geq 0}$. The \emph{moment map} $\mu: Gr(k,n)^{\geq0} \rightarrow \mathbb{R}^{n}$ is defined by

\[ \mu(A) = \frac{\sum_{I \in \binom{[n]}{k}}|p_{I}(A)|^{2}e_{I}}{ \sum_{I \in \binom{[n]}{k}}|p_{I}(A)|^{2}}\]

\end{definition}

A positroid dissection is called a \emph{positroid tiling} if $\mu$ is injective on each $S_{\pi}$. As can be seen from the definition, dissections are a more generalized notion of a polytopal subdivision for a hypersimplex, with no restrictions on how individual pieces meet at the boundary, although the notion of \emph{good dissections} \cite{m=2amplut} exactly agrees with the notion of a subdivision,

\begin{definition}
Let $\mathcal{C} = \{\Gamma_{\pi^{(1)}} , \hdots , \Gamma_{\pi^{(l)}}\}$ be a dissection of $\Delta(k+1,n)$ . We say that $\mathcal{C}$ is a good dissection of $\Delta(k+1,n)$ if the following condition is satisfied: for $i \ne j$, if $\Gamma_{\pi^{(i)}} \cap \Gamma_{\pi^{(j)}}$ has
codimension one, then $\Gamma_{\pi^{(i)}} \cap \Gamma_{\pi^{(j)}}$ equals $\Gamma_{\pi}$, where $\Gamma_{\pi}$ is a facet of both $\Gamma_{\pi^{(i)}}$ and $\Gamma_{\pi^{(j)}}$.    
\end{definition}

In \cite{m=2amplut} a dissection for the hypersimplex is provided inspired by \emph{BCFW recurrence} relations for tilings of the $m=4$ amplituhedron, which is referred as the \emph{BCFW-style recurrence}

\begin{theorem}[Theorem 4.5 \cite{m=2amplut}]\label{thm:BCFW_rec_m=2}
Let $\mathcal{C}_{k+1,n-1}$ (respectively $\mathcal{C}_{k,n-1})$ be a collection of positroid polytopes that dissects the hypersimplex $\Delta(k+1,n-1)$ (respectively $\Delta(k,n-1))$. Then 

\[ \mathcal{C}_{k+1,n}  = i_{\text{pre}} (\mathcal{C}_{k+1,n-1}) \cup  i_{\text{inc}}(\mathcal{C}_{k,n-1}) \]

dissects $\Delta(k+1,n)$,where $i_{\text{pre}}$ and $i_{\text{inc}}$ are maps defined on reduced plabic graphs in \cite[Definition 4.1]{m=2amplut}.

\end{theorem}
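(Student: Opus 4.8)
The plan is to verify directly the three conditions defining a positroid dissection of $\Delta(k+1,n)$ for the family $\mathcal{C}_{k+1,n}=i_{\text{pre}}(\mathcal{C}_{k+1,n-1})\cup i_{\text{inc}}(\mathcal{C}_{k,n-1})$: that every member is an $(n-1)$-dimensional positroid polytope contained in $\Delta(k+1,n)$; that distinct members have disjoint relative interiors; and that together they cover $\Delta(k+1,n)$. I would argue by induction on $n$, with small hypersimplices as immediate base cases.

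The first step is a local analysis of the two operations. Since $i_{\text{pre}}$ and $i_{\text{inc}}$ are defined by prescribed local modifications of a reduced plabic graph that introduce the new boundary vertex $n$, I would first check that they send reduced plabic graphs to reduced plabic graphs; this guarantees that each output indexes an honest positroid cell $S_\pi\subseteq\Gr(k+1,n)^{\geq 0}$, and hence that its moment-map image is a positroid polytope $\Gamma_\pi\subseteq\Delta(k+1,n)$. The two facts to extract from the bookkeeping are that $i_{\text{pre}}$ and $i_{\text{inc}}$ each raise the dimension of a positroid cell by exactly one (an \emph{inverse soft factor} at $n$ for $i_{\text{pre}}$, and a rank-increasing BCFW bridge at $n$ for $i_{\text{inc}}$), so that images of full-dimensional members of $\mathcal{C}_{k+1,n-1}$ and $\mathcal{C}_{k,n-1}$ are $(n-1)$-dimensional; this settles the first condition.

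The heart of the argument is the global part. I would isolate a single two-piece positroid subdivision of $\Delta(k+1,n)$ into full-dimensional positroid polytopes $P_{\text{pre}}$ and $P_{\text{inc}}$ meeting along a common facet, characterised by the property that $i_{\text{pre}}$ applied to the trivial dissection $\{\Delta(k+1,n-1)\}$ yields exactly $\{P_{\text{pre}}\}$ and $i_{\text{inc}}$ applied to $\{\Delta(k,n-1)\}$ yields exactly $\{P_{\text{inc}}\}$. Granting this, the remaining conditions follow from a functoriality property to be proved alongside: $i_{\text{pre}}$ carries any positroid dissection of $\Delta(k+1,n-1)$ to a positroid dissection of $P_{\text{pre}}$ refining it the same way the input refines the base, and likewise $i_{\text{inc}}$ and $P_{\text{inc}}$. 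Covering of $\Delta(k+1,n)$ then follows by combining $P_{\text{pre}}\cup P_{\text{inc}}=\Delta(k+1,n)$ with the coverings downstairs; disjointness of interiors within the $i_{\text{pre}}$ family (resp.\ the $i_{\text{inc}}$ family) follows from disjointness downstairs together with injectivity of the extrusion on interiors; and disjointness across the two families is immediate since $P_{\text{pre}}$ and $P_{\text{inc}}$ overlap only in their shared facet. As an alternative route for this global step, one could transport the statement through the T-duality map to the BCFW recursion for dissections of the $m=2$ amplituhedron $\mathcal{A}_{n-1,k,2}$, where the analogous ``append a particle'' recursion is more transparent.

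The main obstacle is precisely this global step: faithfully translating the plabic-graph definitions of $i_{\text{pre}}$ and $i_{\text{inc}}$ into the moment-map/polytope picture, identifying $P_{\text{pre}}$ and $P_{\text{inc}}$, and --- most delicately --- ruling out interior overlaps, both of two $i_{\text{pre}}$-cells (or two $i_{\text{inc}}$-cells) coming from non-overlapping pieces downstairs, and of an $i_{\text{pre}}$-cell with an $i_{\text{inc}}$-cell. This is exactly where the positroid hypothesis is essential: the parallel ``add a loop / add a coloop-like vertex'' recursion fails to yield a dissection for arbitrary matroid polytopes, and it is the special compatibility of positroid polytopes with the split geometry of the hypersimplex that forces the pieces of $\mathcal{C}_{k+1,n}$ to fit together without overlap.
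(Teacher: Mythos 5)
This theorem is not proved in the paper at all: it is imported verbatim from \cite{m=2amplut} (their Theorem~4.5) and used as a black box, so there is no internal proof to compare your attempt against. Judged on its own terms, your outline does track the strategy of the cited source: the decisive structural fact is indeed that $\Delta(k+1,n)$ admits a two-piece positroidal split (along a hyperplane of the form $x_{n-1}+x_n=1$) whose two full-dimensional cells are exactly the images of the trivial dissections under $i_{\text{pre}}$ and $i_{\text{inc}}$, and the recursion is then the common refinement of that split with the dissections pulled up from $\Delta(k+1,n-1)$ and $\Delta(k,n-1)$. This is consistent with the remark later in the paper that BCFW-style dissections ``can be understood as coming from splits'' (citing \cite[Remark~4.8]{m=2amplut}).

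That said, as written your proposal is a program rather than a proof, and the gap sits exactly where you locate it. Everything rests on two unestablished lemmas: (i) the identification of $P_{\text{pre}}$ and $P_{\text{inc}}$ as the two halves of a concrete split, together with the containments $\mu(S_{i_{\text{pre}}(\pi)})\subseteq P_{\text{pre}}$ and $\mu(S_{i_{\text{inc}}(\pi)})\subseteq P_{\text{inc}}$; and (ii) the ``functoriality'' claim that $i_{\text{pre}}$ and $i_{\text{inc}}$ carry dissections downstairs to dissections of $P_{\text{pre}}$ and $P_{\text{inc}}$, which requires translating the local plabic-graph surgeries into statements about bases of the resulting positroids (e.g.\ via the effect on decorated permutations or Grassmann necklaces) so that one can compute the new moment polytopes and verify both the dimension increase and the disjointness of the open strata. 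None of this is routine bookkeeping --- it is the entire content of the theorem --- and your dimension argument also quietly conflates the dimension of a positroid cell with the dimension of its moment-map image, which differ in general since $\mu$ need not be injective on cells. Your alternative route via T-duality also inverts the logical order of \cite{m=2amplut}, where the hypersimplex recursion is proved first and the amplituhedron statement is deduced from it, so it cannot be used without circularity.
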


\subsection{Matroidal definition for BCFW dissections of hypersimplex}

We now try to build a purely matroidal relation for BCFW-style recurrence dissection for hypersimplices.

%\begin{definition}
%Let $\mathcal{PM} = \{ \Sigma_{\pi}\}$ be a collection of matroid poyltopes. $\mathcal{PM}$ is a \emph{matroid dissection} of $\Delta(k,n)$ if 

%\begin{itemize}
    %\item  dim$(\Sigma_{\pi}) = n-1$ for each $\Sigma_{\pi} \in \mathcal{M}$
    %\item pairs of two distinct matroid polytopes $\Sigma^{o}_{\pi} = \mu(\Sigma_{\pi})$ and $\Sigma^{o}_{\pi'} = \mu(\Sigma_{\pi})$ are pairwise disjoint, and 
    %\item $\cup_{\pi} \Sigma = \Delta(k,n)$.
%\end{itemize}
    
%\end{definition}

We provide some context to our notations. For a positroid polytope $\mathcal{P}$, we refer to the underlying positroid as $\mathcal{P} = \conv(\mathcal{M})$, where $\conv$ represents taking the convex hull of the indicator vectors of the bases of $\mathcal{M}$. We now provide the matroidal definition for \emph{BCFW style recurrence dissections} for the hypersimplex.

\begin{theorem}[matroidal BCFW style relations for hypersimplex]\label{thm:BCFW_matroidal}

Let $\mathcal{C}_{k+1,n}$ be a collection of positroid polytopes that dissects the 
hypersimplex $ \Delta(k+1,n) = \conv(\mathcal{U}_{k+1,n})$. 

Then, 

\[\mathcal{C}_{k+1,n} =  ((\mathcal{C}_{k+1,n}) / e_{i}) \cup ((\mathcal{C}_{k+1,n}) \setminus e_{i})      \]

and the set $(\mathcal{C}_{k+1,n}) / e_{i})$ provides a positroid dissection of $\Delta(k,n-1)$ and $(\mathcal{C}_{k+1,n}) \setminus e_{i})$ provides a positroid dissection of $\Delta(k+1,n-1)$ , where $'/'$ represents contraction and $'\setminus'$ represents the deletion operations on matroids.
\end{theorem}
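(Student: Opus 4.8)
The plan is to translate the BCFW-style recurrence of Theorem~\ref{thm:BCFW_rec_m=2} from the language of plabic graphs into the language of matroids, using the fact that the maps $i_{\mathrm{pre}}$ and $i_{\mathrm{inc}}$ on plabic graphs correspond, at the level of positroids, precisely to the inverses of deletion and contraction of the element $e_i$. First I would set up the dictionary: if $\Gamma_\pi = \conv(\mathcal{M})$ is a positroid polytope inside $\Delta(k+1,n)$, then $\Gamma_\pi \cap \{x_i = 0\}$ and $\Gamma_\pi \cap \{x_i = 1\}$ are the matroid polytopes of the deletion $\mathcal{M}\setminus e_i$ and the contraction $\mathcal{M}/ e_i$ respectively (when these faces are nonempty and of the right dimension); both are again positroids since deletion and contraction preserve the positroid property. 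Conversely, every positroid polytope in $\Delta(k,n-1)$ or $\Delta(k+1,n-1)$ arises as such a slice. This identifies $(\mathcal{C}_{k+1,n})/e_i$ with $i_{\mathrm{inc}}^{-1}$ applied cellwise and $(\mathcal{C}_{k+1,n})\setminus e_i$ with $i_{\mathrm{pre}}^{-1}$ applied cellwise, so the matroidal statement becomes the ``inverse'' of Theorem~\ref{thm:BCFW_rec_m=2}.

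The key steps, in order, are: (1) show that for each $\Gamma_\pi \in \mathcal{C}_{k+1,n}$ at least one of the two slices $\{x_i = 0\}$, $\{x_i = 1\}$ is a facet of $\Gamma_\pi$ of the appropriate dimension, or that both are lower-dimensional and contribute nothing — i.e.\ argue that the dissection of $\Delta(k+1,n)$ is compatible with the hyperplane slicing so that projecting each piece gives pieces of full dimension $n-2$ in $\Delta(k,n-1)$ and $\Delta(k+1,n-1)$; (2) verify that the images under contraction (projection of the $x_i=1$ slice to $\mathbb{R}^{[n]\setminus i}$) cover $\Delta(k,n-1)$ and are pairwise interior-disjoint, and symmetrically for deletion and $\Delta(k+1,n-1)$ — this uses that $\Delta(k+1,n) \cap \{x_i = c\}$ for $c \in \{0,1\}$ are exactly copies of $\Delta(k+1,n-1)$ and $\Delta(k,n-1)$ and that slicing a dissection by a hyperplane spanned by facets yields a dissection of each piece; (3) conclude that the disjointness and covering conditions of Definition~\ref{def:pos_dissetion} are inherited, so $(\mathcal{C}_{k+1,n})/e_i$ dissects $\Delta(k,n-1)$ and $(\mathcal{C}_{k+1,n})\setminus e_i$ dissects $\Delta(k+1,n-1)$; and (4) observe the decomposition identity $\mathcal{C}_{k+1,n} = ((\mathcal{C}_{k+1,n})/e_i) \cup ((\mathcal{C}_{k+1,n})\setminus e_i)$ is just the statement that every point of $\Delta(k+1,n)$ with $x_i \in [0,1]$ lies over one of the two slabs, reconstructed via $i_{\mathrm{pre}}$, $i_{\mathrm{inc}}$, which is Theorem~\ref{thm:BCFW_rec_m=2} read backwards.

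I would lean on two facts that may be cited: deletion and contraction of a positroid by a single element are again positroids (standard, via the decorated permutation or the matrix representation), and the behaviour of positroid polytopes under the coordinate hyperplane sections $x_i = 0$ and $x_i = 1$ is the matroid-polytope incarnation of deletion and contraction (this is in the literature on matroid subdivisions; compare the discussion around good dissections in \cite{m=2amplut}). The main obstacle I expect is step~(1)/(2): controlling how a \emph{dissection} — as opposed to a genuine subdivision — interacts with the hyperplane slices, since pieces of a dissection may meet badly along their boundaries, and one must ensure no piece $\Gamma_\pi$ has both slices degenerate while still contributing full-dimensionally, and that the projected pieces still only overlap in measure zero. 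Establishing that the slicing operation sends (good) positroid dissections to (good) positroid dissections, and that this is exactly dual to the plabic-graph operations $i_{\mathrm{pre}}$ and $i_{\mathrm{inc}}$, is the crux; once that compatibility is in hand, the covering and disjointness bookkeeping of Definition~\ref{def:pos_dissetion} is routine.
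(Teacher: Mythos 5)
Your proposal follows essentially the same route as the paper: both arguments rest on identifying the hyperplane sections $x_i=0$ and $x_i=1$ of $\Delta(k+1,n)$ with the deletion facet $\Delta(k+1,n-1)$ and the contraction facet $\Delta(k,n-1)$, observing that these sections realize $\mathcal{M}\setminus e_i$ and $\mathcal{M}/e_i$ at the level of matroid polytopes, and then restricting the given dissection to each facet. Your steps (1)--(3) are in fact more careful than the paper's proof, which simply asserts that ``these minors correspond to the dissections induced on the respective facets'' without addressing the dimension drops and overlap bookkeeping you rightly flag as the crux (e.g.\ a connected positroid $\mathcal{M}$ can have $\mathcal{M}\setminus e_i$ disconnected, so some slices must be discarded as lower-dimensional before checking the covering condition of Definition~\ref{def:pos_dissetion}). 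One caveat: your framing of the whole argument as inverting $i_{\mathrm{pre}}$ and $i_{\mathrm{inc}}$, and your step (4) which reads Theorem~\ref{thm:BCFW_rec_m=2} ``backwards,'' assert an identification that the paper deliberately avoids --- the remark following Theorem~\ref{thm:BCFW_matroidal} states explicitly that it is not obvious that matroidal operations equivalent to $i_{\mathrm{pre}}$ and $i_{\mathrm{inc}}$ exist, and that the matroidal statement is only an ``almost converse'' since not every positroid dissection arises from the BCFW recursion. You should drop that identification (it is unproven and unnecessary); the direct hyperplane-slicing argument of your steps (1)--(3) stands on its own and is what the paper actually does.
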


\begin{proof}
Firstly we note that the hypersimplex $\Delta(k+1,n)$, is a 0-1 polytope obtained by the intersection of the unit cube $[0,1]^{n}$ with the affine hyperplane $\sum_{i=1}^{n} x_{i} = k+1$. We note that the facet corresponding to the hyperplane $x_{i}=0$ is termed as the \emph{i-th deletion facet} of $\Delta(k+1,n)$ and is isomorphic to $\Delta(k+1,n-1)$. Similarly, the facet corresponding to the hyperplane $x_{i}=1$ is termed as the \emph{i-th contraction facet} of $\Delta(k+1,n)$ and is isomorphic to $\Delta(k,n-1)$. Also, these facets can be obtained as deletion and contraction respectively on the uniform matroid $\mathcal{U}_{k,n}$ \cite{herrmann2009draw}. 

With these definitions, the notions of contraction and deletion extend to respective dissections and subdivisions and this fact is used in \cite{herrmann2009draw}. We point out the natural dissections of hypersimplex into two minors provided by contraction and deletion. Let $v \in \text{Vert}(\Delta(k+1,n))$ then since each dissection into $\conv \>((\mathcal{M}_{k+1,n}) / e_{i})$ and $\conv((\mathcal{M}_{k+1,n}) \setminus e_{i})$ is defined by hyperplanes $x_{i} = 0$ or $x_{i}=1$, therefore every vertex $v$ lies in either $\conv \>((\mathcal{M}_{k+1,n}) / e_{i})$ or $\conv((\mathcal{M}_{k+1,n}) \setminus e_{i})$. 

Given a positroid dissection $\mathcal{C}_{k+1,n-1}$ we consider the minors with respect to an element $i \in [n]$ and obtain minors $((\mathcal{C}_{k+1,n}) / e_{i})$ and $((\mathcal{C}_{k+1,n}) \setminus e_{i})$. We recognize that these minors also correspond to the dissections induced on the respective contraction and deletion facets of $\Delta(k+1,n)$ respectively, each of which is isomorphic to $\Delta(k,n-1)$ and $\Delta(k+1,n-1)$ respectively, which give us the two required positroid dissections. 
\end{proof}

\begin{remark}
We point out that Theorem \ref{thm:BCFW_matroidal} provides a  matroidal formulation of BCFW style relations for hypersimplex, and proves an almost converse statement of Theorem \ref{thm:BCFW_rec_m=2}, and we say that this is almost a converse statement since we know that not all positroid dissections of $\Delta(k+1,n)$ occur from BCFW style recursions \cite{m=2amplut}, whereas the statement of Theorem \ref{thm:BCFW_matroidal} involves matroidal operations, therefore for any positroid dissection of $\Delta(k+1,n)$ we can obtain dissections of $\Delta(k,n-1)$ and $\Delta(k+1,n-1)$ in this way. We point out that it is not obvious that there exist matroidal operations equivalent to the operations $i_{pre}$ and $i_{inc}$ used in Theorem \ref{thm:BCFW_rec_m=2}. We also wish to explore a possible generalization of the statement for Theorem \ref{thm:BCFW_matroidal} to matroid dissections and not necessarily positroid dissections. However, such a discussion would require an appropriate definition of a matroid dissection and a  generalization of Theorem \ref{thm:BCFW_rec_m=2} to the case of matroid dissections, as the non-trivial part of the proof of Theorem \ref{thm:BCFW_rec_m=2}  rests on a refined description of facets of positroid polytopes defined by Postnikov, described in \cite[Proposition 5.6]{ardila2016positroids}.  
\end{remark}

\begin{example}
We again consider the snake polytope decomposition of $\Delta(3,6)$ described in Figure \ref{fig:LPM_(3,6)}. As we know that this is also a regular positroidal subdivision, or equivalently a regular positroid good dissection. We now perform the contraction and deletion with respect to the element $i = 1$ on this subdivision, and obtain two collections in which we see that $\{ M_{2} \setminus \{1\}, \hdots , M_{6} \setminus \{1\}\}$ provides a positroidal subdivision (equivalently a positroid good dissection) of $\Delta(3,5)$ on letters $[6] \setminus \{1\} = \{2,3,4,5,6\}$ and $\{ M_{1} / \{1\}, M_{2} / \{1\}, M_{3} / \{1\}\}$ provides a positroidal subdivision on $\Delta(2,5)$ (cf. from Figure \ref{fig:contr_del}.)

\begin{figure}
    \centering
    \includegraphics[scale=0.45]{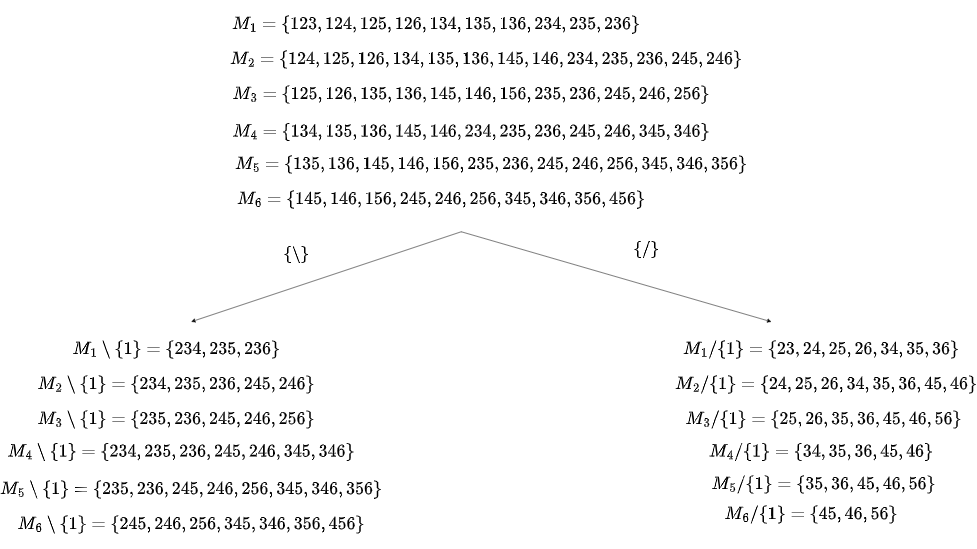}
    \caption{The contraction and deletion operation on the positroidal subdivision of $\Delta(3,6)$ from Figure \ref{fig:LPM_(3,6)}. }
    \label{fig:contr_del}
\end{figure}

\end{example}

\subsection{BCFW cells correspond to lattice path matroids}

We want to point out that in the discussion in this section, we would only be focusing on the m=4 amplituhedron.

In a recent breakthrough work \cite{even2021amplituhedron}, the authors prove the conjecture that BCFW cells provide a triangulation of the amplituhedron $\mathcal{A}_{n,k,4}$. In \cite{even2021amplituhedron} and \cite{karp2020decompositions} the authors establish the equivalence between BCFW cells and noncrossing lattice walks (paths). We use this observation to explore the connection between BCFW triangulations and lattice path matroids.

We borrow mostly our notation from \cite{karp2020decompositions}. Let $\mathcal{L}_{n,k,4}$ denote the set of all pairs $(P_{\mathcal{L}}, Q_{\mathcal{L}})$ of \emph{noncrossing} lattice paths inside a $k \times (n - k - 4)$ rectangle, where the notion of noncrossing is the same as $P$ never going above $Q$ implicit in Definition \ref{def:LPM}. Therefore, we state one of our first conclusions in the form of Corollary \ref{cor:noncrossing}.

\begin{corollary}\label{cor:noncrossing}
Let $(P_{\mathcal{L}}, Q_{\mathcal{L}})  \in \mathcal{L}_{n,k,4}$ be a pair of noncrossing lattice paths. Then $(P_{\mathcal{L}}, Q_{\mathcal{L}})$ determine a lattice path matroid $\mathcal{M}[P_{\mathcal{L}}, Q_{\mathcal{L}}]$ which lies inside the lattice path matroid $\mathcal{U}_{k,n-4}$. 
\end{corollary}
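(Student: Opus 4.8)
The plan is to unwind definitions: the statement is really the observation that the combinatorial data of a noncrossing pair of lattice walks appearing in the amplituhedron literature is literally the data required to invoke Definition \ref{def:LPM}, once the two conventions for rectangular lattice grids are aligned. First I would fix the ambient grid. A pair $(P_{\mathcal{L}}, Q_{\mathcal{L}}) \in \mathcal{L}_{n,k,4}$ consists of NE-paths inside a $k \times (n-k-4)$ rectangle; since $k + (n-k-4) = n-4$, this rectangle is exactly the grid $\mathcal{U}_{r,n'}$ of Definition \ref{def:LPM} with $r = k$ and $n' = n-4$. Hence each such path runs from $(0,0)$ to $(n-k-4,\,k)$ and is recorded by its length-$k$ sequence of north-step positions, a $k$-subset of $[n-4]$. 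I would also recall that the uniform matroid $\mathcal{U}_{k,n-4}$ is precisely the lattice path matroid of this full rectangle, i.e. $\mathcal{U}_{k,n-4} = \mathcal{M}[P_{0}, Q_{0}]$ where $P_{0}$ takes all its north steps first and $Q_{0}$ takes them last; indeed any $\{i_{1} < \cdots < i_{k}\} \subseteq [n-4]$ automatically satisfies $j \le i_{j} \le (n-4-k)+j$, so $\mathcal{M}[P_{0},Q_{0}]$ has base set all of $\binom{[n-4]}{k}$.

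Next, since by hypothesis ``noncrossing'' for $(P_{\mathcal{L}}, Q_{\mathcal{L}})$ is exactly the bounding condition that one path never rises above the other used in Definition \ref{def:LPM} (up to the harmless relabeling of which path plays the role of the lower bound and which the upper bound), Definition \ref{def:LPM} applies verbatim and yields the lattice path matroid $\mathcal{M}[P_{\mathcal{L}}, Q_{\mathcal{L}}]$ on ground set $[n-4]$, whose bases are the north-step sets of the NE-paths trapped between $P_{\mathcal{L}}$ and $Q_{\mathcal{L}}$. Finally, any such path is in particular a path in the full $k \times (n-k-4)$ rectangle, so its north-step set is a basis of $\mathcal{U}_{k,n-4}$; therefore $\mathcal{M}[P_{\mathcal{L}}, Q_{\mathcal{L}}] \subseteq \mathcal{U}_{k,n-4}$ at the level of bases, equivalently the matroid polytope of $\mathcal{M}[P_{\mathcal{L}}, Q_{\mathcal{L}}]$ is contained in the hypersimplex $\Delta(k,n-4)$. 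In skew-shape language this is just the inclusion of the strip bounded by $P_{\mathcal{L}}, Q_{\mathcal{L}}$ into the rectangle, exactly as for the truncated LPM polytopes of Section \ref{sec:Sec3}.

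I do not expect a genuine obstacle here; the statement is essentially a definition-check. The only point requiring care is bookkeeping of conventions — the placement of the rectangle's corners and the direction of the noncrossing inequality used in \cite{karp2020decompositions, even2021amplituhedron} as against Definition \ref{def:LPM} — which might force a reflection of the grid or a swap of $P_{\mathcal{L}}$ with $Q_{\mathcal{L}}$, but nothing deeper; once the identification of the $k\times(n-k-4)$ rectangle with $\mathcal{U}_{k,n-4}$ is made, both assertions of the corollary are immediate.
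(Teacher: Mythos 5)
Your proposal is correct and takes essentially the same route as the paper, which states the corollary as an immediate consequence of identifying the $k\times(n-k-4)$ rectangle (ground set of size $k+(n-k-4)=n-4$, rank $k$) with the grid of Definition \ref{def:LPM} and noting that the noncrossing condition is exactly the bounding condition there. Your additional care about conventions and the explicit verification that the full rectangle gives $\mathcal{U}_{k,n-4}$ only makes the definition-check more complete than the paper's one-line justification.
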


We describe the connection between non-crossing lattice paths and BCFW cells of $\mathcal{A}_{k,n,4}$. Firstly, in\cite{karp2020decompositions} the authors introduce the notion of a $\oplus-\text{diagram}$ of type $(k,n)$, which are defined as follows \cite[Definition 2.3]{karp2020decompositions}

\begin{definition}
Fix $0 \leq k \leq n$. Given a partition $\lambda$, we let $Y_{\lambda}$ denote the Young diagram of $\lambda$. A $\oplus-\text{diagram}$ of type $(k,n)$ is a filling $D$ of a Young diagram $Y_{\lambda}$ fitting inside a $k \times (n - k)$ rectangle with the symbols $0$ and $+$ (such that each box of Y is filled with exactly one symbol) and  $\lambda$ is called the shape of $D$ (cf. Figure \ref{fig:plus_diag}).  
\end{definition}

\begin{figure}
    \centering
    \includegraphics[scale=0.6]{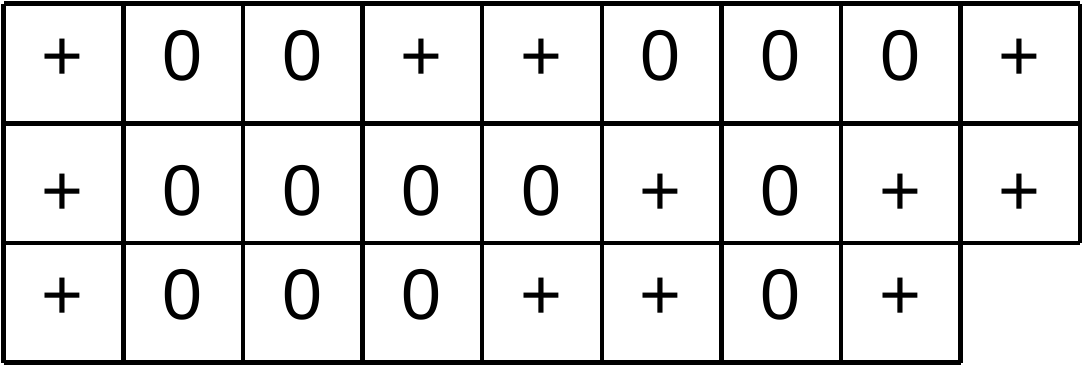}
    \caption{A $\oplus-$diagram $D$ of type $(3, 12)$.}
    \label{fig:plus_diag}
\end{figure}

The rules according to which the filling in a $\oplus$-diagram is obtained are elaborated in \cite[Definition 6.2]{karp2020decompositions}. Let $\mathcal{D}_{n,k,4}$ be the space of $\oplus-\text{diagram}$ of type $(k,n)$. We infer the following result from \cite[Definition 6.2]{karp2020decompositions}

\begin{lemma}
There exists an bijection $\Omega_{\mathcal{L}\mathcal{D}}$ such that

\[  \Omega_{\mathcal{L}\mathcal{D}} : \mathcal{L}_{n,k,4} \rightarrow \mathcal{D}_{n,k,4} \]
\end{lemma}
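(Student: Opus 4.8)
The plan is to construct the bijection $\Omega_{\mathcal{L}\mathcal{D}}$ explicitly by reading off, from a pair of noncrossing lattice paths $(P_{\mathcal{L}}, Q_{\mathcal{L}})$ inside a $k \times (n-k-4)$ rectangle, a $\oplus$-diagram of type $(k,n)$, following the recipe in \cite[Definition 6.2]{karp2020decompositions}, and then exhibiting an inverse. First I would recall that a pair of noncrossing lattice paths in a $k \times (n-k-4)$ rectangle is equivalent to the data of the skew shape $\lambda / \mu$ trapped between them: the path $Q_{\mathcal{L}}$ cuts out a Young diagram $Y_{\lambda}$ (the cells weakly below/left of $Q_{\mathcal{L}}$), and the path $P_{\mathcal{L}}$ cuts out a sub-Young-diagram $Y_{\mu} \subseteq Y_{\lambda}$, with $\mu \subseteq \lambda$ precisely because the paths do not cross. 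So the target of the map on underlying shapes is forced: the $\oplus$-diagram will have shape $\lambda$, and its filling will place the symbol $+$ exactly on the cells of the skew region $\lambda / \mu$ and the symbol $0$ on the cells of $\mu$ (up to whatever transpose/reflection convention \cite{karp2020decompositions} uses to land in a $k \times (n-k)$ rectangle rather than a $k \times (n-k-4)$ one — the four extra columns are accounted for by the fact that BCFW cells of $\mathcal{A}_{n,k,4}$ live in $\Gr(k,n)$ while the lattice paths live in the smaller rectangle associated to $\Delta(k,n-4)$).

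The key steps, in order, would be: (1) make the shape correspondence $\lambda \leftrightarrow Q_{\mathcal{L}}$ and $\mu \leftrightarrow P_{\mathcal{L}}$ precise, noting it is a bijection between NE-paths in the rectangle and Young diagrams fitting inside it; (2) define $\Omega_{\mathcal{L}\mathcal{D}}(P_{\mathcal{L}}, Q_{\mathcal{L}})$ to be the $\oplus$-diagram of shape $\lambda$ whose cells are filled with $+$ on $\lambda/\mu$ and with $0$ elsewhere, and check this is a legal $\oplus$-diagram of type $(k,n)$ — i.e. that the filling satisfies the combinatorial constraints of \cite[Definition 6.2]{karp2020decompositions} (this is where the noncrossing hypothesis does its real work: it is exactly the condition that the $+$'s form a "staircase-convex" skew region, which is the shape of a legal $\oplus$-diagram arising from a BCFW cell); (3) define the inverse by reading a $\oplus$-diagram back to the pair of boundary paths of its region of $+$'s; (4) verify the two composites are the identity, which is immediate once (2) and (3) are set up compatibly.

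The main obstacle I expect is step (2): verifying that the filling rule "put $+$ on the skew region, $0$ elsewhere" produces precisely the $\oplus$-diagrams that \cite[Definition 6.2]{karp2020decompositions} associates to BCFW cells, and not some larger or smaller class. The definition in \cite{karp2020decompositions} of a $\oplus$-diagram arising from noncrossing lattice walks is phrased via a specific labeling/filling procedure on the walks, not as "skew region of a pair of nested paths," so the work is in unwinding their definition and matching it to the clean description above — in particular tracking the index shift from $(n-k-4)$ to $(n-k)$ columns and confirming that the noncrossing condition of Definition \ref{def:LPM} (which, as noted in the excerpt, is the same as "$P$ never going above $Q$") is equivalent to the containment $\mu \subseteq \lambda$ that makes the filling well-defined. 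Once that dictionary is established, bijectivity is formal, since both sides are finite sets parameterized by the same underlying combinatorial datum (a Young diagram inside the rectangle together with a nested sub-diagram).
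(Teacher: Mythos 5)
Your overall strategy --- encode the pair of noncrossing paths as a nested pair of Young diagrams $\mu\subseteq\lambda$ and then write down an explicit filling --- is reasonable, but it is not what the paper does, and as written it has a genuine gap exactly where you flag one. The paper offers no construction at all: the map $\Omega_{\mathcal{L}\mathcal{D}}$ and the set $\mathcal{D}_{n,k,4}$ are both taken directly from \cite[Definition 6.2]{karp2020decompositions}, where $\mathcal{D}_{n,k,4}$ is in effect defined as the image of $\mathcal{L}_{n,k,4}$ under that recipe, so the lemma is a restatement of the cited definition rather than something proved here. (Indeed, for the lemma to be literally true one must read $\mathcal{D}_{n,k,4}$ as the set of BCFW $\oplus$-diagrams produced by that recipe, not the set of all $\oplus$-diagrams of type $(k,n)$, which is strictly larger.)

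The gap in your version is your step (2). The filling rule you propose --- shape $\lambda$ cut out by $Q_{\mathcal{L}}$, symbol $+$ exactly on the skew region $\lambda/\mu$ and $0$ on $\mu$ --- is a guess, and it cannot be correct as literally stated: a diagram of that form is equivalent to a nested pair of Young diagrams in the $k\times(n-k)$ rectangle, and the number of such pairs strictly exceeds $|\mathcal{L}_{n,k,4}|$, which counts nested pairs in the smaller $k\times(n-k-4)$ rectangle. So either the shapes or the fillings that actually occur must be constrained in a way your rule does not specify; the recipe in \cite[Definition 6.2]{karp2020decompositions} handles the four extra columns explicitly and fills each row according to a specific pattern of $0$'s and $+$'s, not simply ``all $+$ on the skew region.'' Since verifying that the image of your map is precisely $\mathcal{D}_{n,k,4}$ is the entire content of the statement, deferring that verification (as you do in your ``main obstacle'' paragraph) leaves the proof incomplete; by contrast, your steps (1), (3) and (4) are the easy part and would go through once the correct rule is in place.
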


\begin{theorem}[Theorem 6.3 \cite{karp2020decompositions}]
The $\oplus-\text{diagrams}$ $\mathcal{D}_{n,k,4}$ index the $(k, n)$-BCFW cells $\mathcal{C}_{n,k,4}$ .    
\end{theorem}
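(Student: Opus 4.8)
The plan is to prove the statement by induction on $n$, showing that the BCFW recurrence producing the $(k,n)$-cells of $\mathcal{A}_{n,k,4}$ is mirrored, step for step, by an operation on $\oplus$-diagrams, with matching base cases. First I would recall the precise form of the $m=4$ BCFW recursion from \cite{even2021amplituhedron}: it realizes each cell of $\mathrm{Gr}^{\geq 0}(k,n)$ occurring in the triangulation as the image, under two explicit promotion maps on decorated permutations (equivalently on reduced plabic graphs, equivalently on fillings of Young diagrams), of a pair of cells for the smaller parameters $(k-1,n-1)$ and $(k,n-1)$, together with a small set of initial cells seeding the recursion. The combinatorial heart of the argument is then to identify, for each promotion map, the corresponding operation on the filled Young diagram: one map amounts to adjoining an empty strip filled entirely with $0$'s (a deletion-type enlargement of the shape), and the other amounts to inserting a $+$ in a prescribed corner box (a contraction-type step), exactly as dictated by the filling rules of \cite[Definition 6.2]{karp2020decompositions}.

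Second I would verify the base cases, where the rectangle $k \times (n-k-4)$ degenerates: there is a unique BCFW cell and a unique $\oplus$-diagram, so the correspondence is immediate. In the inductive step I would argue that the assignment $\Phi$ sending a BCFW cell --- built by a specified sequence of recursive steps --- to the $\oplus$-diagram obtained by applying the corresponding sequence of diagram operations is (i) well defined, i.e.\ independent of the order of compatible recursive steps, using commutation relations among the promotion maps; (ii) surjective onto $\mathcal{D}_{n,k,4}$, since any $\oplus$-diagram can be peeled along its boundary to exhibit it as the output of a diagram operation applied to a strictly smaller diagram, which by induction already comes from a BCFW cell; and (iii) injective, which I expect to follow from the fact that the outermost $0$'s and $+$'s of the diagram record enough information to reconstruct the last recursive step uniquely, hence by induction the whole construction tree.

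The main obstacle I anticipate is establishing well-definedness and injectivity at once: the $m=4$ BCFW recursion genuinely branches, and a priori two different branchings could yield the same geometric cell --- precisely the subtlety underlying the triangulation result of \cite{even2021amplituhedron} --- so one must show that the $\oplus$-diagram is a complete invariant of the cell and that the peeling above is canonical. Here I would lean on the already-established bijection $\Omega_{\mathcal{LD}}\colon \mathcal{L}_{n,k,4}\to\mathcal{D}_{n,k,4}$: the noncrossing-lattice-path model supplies an independent enumeration and a transparent deconstruction ``delete the last step of the path that is currently longer'', and transporting this through $\Omega_{\mathcal{LD}}$ furnishes the canonical peeling of a $\oplus$-diagram required for injectivity. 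Finally I would record the consequence relevant to us: each $(k,n)$-BCFW cell thereby carries the data of a pair of noncrossing lattice paths, hence by Corollary~\ref{cor:noncrossing} of a lattice path matroid sitting inside $\mathcal{U}_{k,n-4}$, which is exactly the input needed for the amplituhedron half of Theorem~\ref{thm:BCFW_matroidal}.
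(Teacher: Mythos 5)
First, a point of calibration: this statement is quoted verbatim from \cite{karp2020decompositions} and the paper does not prove it --- the only indication given of the argument is the sentence immediately following the theorem, which says the proof in the source goes through a bijection between the binary rooted trees $\mathcal{T}_{n,k,4}$ and $\mathcal{L}_{n,k,4}$ and uses reduced plabic graphs to attach decorated permutations to the $\oplus$-diagrams. Your sketch is therefore being measured against a citation rather than a written proof. That said, your overall strategy --- induct on $n$ along the BCFW recursion and match each of the two recursive branches with an explicit operation on $\oplus$-diagrams --- is the same recursive bookkeeping that the source carries out; the genuine difference is your choice of intermediate object. You use the noncrossing-lattice-path model $\mathcal{L}_{n,k,4}$ and the bijection $\Omega_{\mathcal{L}\mathcal{D}}$ to furnish the canonical ``peeling'' needed for well-definedness and injectivity, whereas the source uses binary rooted trees as the recursion-tracking object and decorated permutations (read off reduced plabic graphs) as the invariant distinguishing cells. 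Both routes are viable because all of these indexing sets are in bijection; the tree model has the advantage that the recursive structure is literally the tree, while your path model obliges you to prove that ``delete the last step of the longer path'' inverts one of the two promotion steps, which is an additional lemma you have not supplied.

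The soft spot is exactly the step you yourself label the combinatorial heart: the identification of the two BCFW promotion maps with ``adjoin a strip of $0$'s'' and ``insert a $+$ in a prescribed corner box'' is asserted, not verified, and that identification \emph{is} the content of the theorem --- everything surrounding it is formal induction. Relatedly, you resolve the injectivity worry (two different branchings yielding the same cell) by declaring the $\oplus$-diagram a complete invariant of the cell, but the clean way to obtain this, and the way the source obtains it, is to pass to decorated permutations: distinct positroid cells have distinct decorated permutations, and the filling rules of \cite[Definition 6.2]{karp2020decompositions} are designed precisely so that the $\oplus$-diagram determines the decorated permutation. Finally, you do not need \cite{even2021amplituhedron} anywhere in this argument: the triangulation statement is logically downstream of the indexing result you are trying to prove, and leaning on it both inverts the historical order and risks circularity in how you justify the recursion.
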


This theorem is proven by using another bijection between the space of \emph{binary rooted trees} $\mathcal{T}_{n,k,4}$ and $\mathcal{L}_{n,k,4}$ and the authors use reduced plabic graphs to produce \emph{decorated permutations} for the $\oplus-\text{diagrams}$. We point the reader to \cite{karp2020decompositions} to explore these concepts and proofs in full detail. Our interest develops with Corollary \ref{cor:noncrossing} and this inspires us to enquire about the existence of a duality between cells of the amplituhedron and dissections of the hypersimplex, which is established via T-duality in the case of $m=2$ amplituhedron in \cite{m=2amplut}. In \cite{even2021amplituhedron} the following result concerning BCFW cells is proven, which was stated as a conjecture in \cite{arkani2014amplituhedron,karp2020decompositions}.

\begin{theorem}
For every $k \geq 1$ and $n \geq k+4$, the $(k, n)$-BCFW cells
form a triangulation of the amplituhedron $\mathcal{A}_{n,k,4}$.    
\end{theorem}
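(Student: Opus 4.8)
The statement is a deep theorem whose proof (carried out in \cite{even2021amplituhedron}) I would organise around the three conditions defining a triangulation of $\mathcal{A}_{n,k,4}(Z)$ by cells of $\Gr(k,n)^{\geq 0}$: each cell has dimension $4k = \dim \mathcal{A}_{n,k,4}$, the amplituhedron map $\overline{Z}$ is injective on each cell, and the images of the open cells are pairwise disjoint with union dense in $\mathcal{A}_{n,k,4}(Z)$. The plan is to establish all three simultaneously by induction along the BCFW recursion that generates the cells. First I would pin down the combinatorial skeleton: using the bijections recalled above between $(k,n)$-BCFW cells $\mathcal{C}_{n,k,4}$, noncrossing lattice-path pairs $\mathcal{L}_{n,k,4}$, $\oplus$-diagrams $\mathcal{D}_{n,k,4}$ and binary rooted trees $\mathcal{T}_{n,k,4}$, I would check that the recursion produces exactly the expected, Narayana-type number of cells and that each is $4k$-dimensional, and for every cell $C$ I would fix once and for all the positive parametrisation $C \cong \RR_{>0}^{4k}$ coming from its BCFW bridge (equivalently reduced plabic graph) decomposition.

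Second, injectivity. Fixing $Z \in \mathrm{Mat}^{>0}_{n,k+4}$ and a cell $C$, I would compute the twistor coordinates $\langle Y\, i\, j\, k\, l\rangle$ of $\overline{Z}(V) = VZ$ as explicit subtraction-free rational functions of the $4k$ bridge parameters and exhibit an inverse; positivity of $Z$ and of the parameters forces the relevant $(k+4)\times(k+4)$ minors to be nonzero with controlled signs, which simultaneously yields the inversion and shows $\overline{Z}(C)$ is a genuine $4k$-dimensional semialgebraic patch of the amplituhedron.

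Third — and this is where the real work lies — the tiling. For disjointness of interiors I would build a family of sign functionaries (suitable monomials in, and differences of, twistor coordinates) whose sign pattern is constant on each $\overline{Z}(C^{\circ})$ and separates distinct BCFW cells, reducing disjointness to a finite check on $\oplus$-diagrams. For the covering $\mathcal{A}_{n,k,4}(Z) = \bigcup_{C} \overline{\overline{Z}(C^{\circ})}$ I would induct on the recursion: the BCFW rule writes $\mathcal{C}_{n,k,4}$ in terms of $\mathcal{C}_{n-1,k-1,4}$ and $\mathcal{C}_{n-1,k,4}$ together with a bridge/inclusion step, and this mirrors a geometric decomposition of $\mathcal{A}_{n,k,4}(Z)$ as a projection with amplituhedron-like fibres; one then verifies that the tiles assigned to the base and the fibres glue along exactly the codimension-one faces predicted by the facet structure of BCFW tiles (read off as the loci where prescribed twistor coordinates vanish), and that the \emph{spurious} internal walls cancel in pairs, so that the union of all tile boundaries equals $\partial \mathcal{A}_{n,k,4}(Z)$. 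Since $\mathcal{A}_{n,k,4}(Z)$ is compact and connected, the union of the closed tiles is closed; combined with the disjointness of interiors and the matching of boundaries, a connectedness and dimension argument then forces this union to exhaust $\mathcal{A}_{n,k,4}(Z)$.

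I expect the covering step to be the main obstacle: matching the purely combinatorial adjacency of BCFW tiles — which codimension-one faces are shared, and how the recursion glues fibre tiles onto base tiles — with the actual boundary geometry of the amplituhedron, and controlling the spurious boundaries so that every internal wall is covered exactly twice with opposite orientation. This demands a precise description of each BCFW tile's facets in terms of vanishing $\langle Y\, i\, j\, k\, l\rangle$, together with careful bookkeeping propagated through the recursion. By comparison, the dimension count, the injectivity via explicit parametrisation, and the sign separation are bounded combinatorics plus subtraction-free algebra.
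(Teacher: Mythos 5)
The paper does not prove this statement at all: it is quoted verbatim as an external result from \cite{even2021amplituhedron} (where it resolves a conjecture of \cite{arkani2014amplituhedron,karp2020decompositions}), and the authors of the present paper only use it as input to Theorem \ref{thm:m=4_aplitu_BCFW_LPM}. So there is no internal proof to compare yours against.

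As for your proposal on its own terms: it is a reasonable reconstruction of the broad architecture of the Even-Zohar--Lakrec--Tessler argument --- dimension count and positive parametrisation of each cell via bridge decompositions, injectivity of $\overline{Z}$ from subtraction-free twistor-coordinate formulas, separation of distinct tiles by sign functions on twistor coordinates, and a covering argument run inductively along the BCFW recursion with cancellation of spurious internal walls. But it is a plan, not a proof. Every genuinely hard step is named and deferred: you do not construct the separating sign functionaries, you do not identify the facets of a BCFW tile as vanishing loci of specific twistor coordinates, and you do not carry out the pairing of spurious boundaries or the degree/connectedness argument that upgrades ``disjoint interiors plus matched walls'' to an exact cover. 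You correctly flag the covering step as the main obstacle, which is honest, but flagging it does not discharge it. If the task is to supply a proof of the theorem, this does not do so; if the task is to describe the strategy of the cited proof for the reader, it is serviceable, though one should then also mention the combinatorial machinery (chord diagrams and domino bases) that the cited work actually uses to control the facet structure, since the paper itself points to those objects in Section \ref{sec:Sec7}.
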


We now state our result based on this discussion,

\begin{theorem}\label{thm:m=4_aplitu_BCFW_LPM}
Each triangulation of the amplituhedron $\mathcal{A}_{n,k,4}$ into $(k, n)$-BCFW cells provides a positroid dissection $\{\Gamma_{i}\}$ of the hypersimplex $\Delta(k,n-4)$, where each BCFW cell corresponds to a lattice path matroid polytope $\Gamma_{i}$.     
\end{theorem}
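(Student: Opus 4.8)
The plan is to transport the problem from the amplituhedron to the hypersimplex along the chain of bijections already assembled, to record that the polytopes produced are positroid polytopes, and then to establish the dissection property by carrying the BCFW recursion over to the matroid side.

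I would first fix a triangulation of $\mathcal{A}_{n,k,4}$ into $(k,n)$-BCFW cells. By Theorem~6.3 of \cite{karp2020decompositions} these cells are indexed by the $\oplus$-diagrams $\mathcal{D}_{n,k,4}$, and applying $\Omega_{\mathcal{L}\mathcal{D}}^{-1}$ re-indexes them by the set $\mathcal{L}_{n,k,4}$ of noncrossing lattice path pairs inside the $k\times(n-k-4)$ rectangle. By Corollary~\ref{cor:noncrossing} each pair $(P^{(i)}_{\mathcal{L}},Q^{(i)}_{\mathcal{L}})$ gives a rank-$k$ lattice path matroid $\mathcal{M}_i=\mathcal{M}[P^{(i)}_{\mathcal{L}},Q^{(i)}_{\mathcal{L}}]$ on $[n-4]$, contained in $\mathcal{U}_{k,n-4}$; let $\Gamma_i$ be the matroid polytope of $\mathcal{M}_i$, so $\Gamma_i\subseteq\Delta(k,n-4)$. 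Since lattice path matroids are positroids (Lemma~23 of \cite{oh2011positroids}), every $\Gamma_i$ is a positroid polytope, and it remains to prove that $\{\Gamma_i\}$ is a positroid dissection of $\Delta(k,n-4)$ in the sense of Definition~\ref{def:pos_dissetion}: each $\Gamma_i$ has dimension $n-5$, the relative interiors $\Gamma_i^{o}$ are pairwise disjoint, and $\bigcup_i\Gamma_i=\Delta(k,n-4)$.

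For the dimension claim I would note that a BCFW $\oplus$-diagram has neither an all-zero row nor an all-zero column --- equivalently its two boundary lattice paths meet only at the two corners of the rectangle --- so $\mathcal{M}_i$ is connected and $\Gamma_i$ is full-dimensional in $\Delta(k,n-4)$; this is read off the construction in \cite[Section~6]{karp2020decompositions}. For disjointness and covering I would induct on $n$ and mirror the BCFW recursion. The recursion of \cite{even2021amplituhedron,karp2020decompositions} builds a triangulation of $\mathcal{A}_{n,k,4}$ out of triangulations of $\mathcal{A}_{n-1,k-1,4}$ and $\mathcal{A}_{n-1,k,4}$, and at the level of noncrossing lattice paths this amounts to deleting the first row, respectively the first column, of the $k\times(n-k-4)$ rectangle, sending $\mathcal{L}_{n,k,4}$ to $\mathcal{L}_{n-1,k-1,4}$, respectively $\mathcal{L}_{n-1,k,4}$. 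Under $(P_{\mathcal{L}},Q_{\mathcal{L}})\mapsto\mathcal{M}[P_{\mathcal{L}},Q_{\mathcal{L}}]$ these two operations should translate into the contraction $\mathcal{M}_i/e$ and the deletion $\mathcal{M}_i\setminus e$ at a fixed boundary element $e$, that is, into passage to the contraction facet $\Delta(k-1,n-5)$ and the deletion facet $\Delta(k,n-5)$ of $\Delta(k,n-4)$, exactly as in the proof of Theorem~\ref{thm:BCFW_matroidal}. Feeding in the inductive hypothesis that the two sub-triangulations yield dissections of $\Delta(k-1,n-5)$ and $\Delta(k,n-5)$, and reassembling them --- along the lines of the BCFW-style recursion of Theorem~\ref{thm:BCFW_rec_m=2}, whose contraction/deletion converse for positroid dissections is Theorem~\ref{thm:BCFW_matroidal} --- one would conclude that $\{\Gamma_i\}$ dissects $\Delta(k,n-4)$; the base case $n=k+4$ is immediate, as both sides degenerate to a point.

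The hard part is precisely this intertwining. One has to prove that the recursive maps on noncrossing lattice paths --- equivalently the maps $i_{\text{pre}}$ and $i_{\text{inc}}$ on reduced plabic graphs that drive the $m=4$ BCFW recursion --- correspond under the lattice path matroid dictionary to deletion and contraction, and, crucially, that this correspondence respects how pieces glue along codimension-one faces, so that disjoint BCFW cells are sent to positroid polytopes with disjoint interiors and a whole triangulation is sent to a (good) dissection rather than to a mere family of positroid polytopes. The remark following Theorem~\ref{thm:BCFW_matroidal} already observes that producing matroidal incarnations of $i_{\text{pre}}$ and $i_{\text{inc}}$ is the delicate step, so a complete argument would have to push the $\oplus$-diagram combinatorics of \cite[Section~6]{karp2020decompositions} carefully through this translation. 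An alternative route that bypasses the gluing subtleties is a volume count: show directly that the $\Gamma_i$ have pairwise disjoint relative interiors (from the noncrossing condition, together with injectivity of an $m=4$ analogue of the T-duality map of \cite{m=2amplut}), and that $\sum_i \operatorname{vol}(\Gamma_i)$ equals the normalized volume of $\Delta(k,n-4)$, using that every BCFW triangulation of $\mathcal{A}_{n,k,4}$ has the same number of cells (a Narayana number) and that the volume of a lattice path matroid polytope counts the lattice paths it contains.
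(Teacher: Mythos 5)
Your first paragraph is, in substance, the entirety of the paper's own proof: the paper passes from BCFW cells to $\oplus$-diagrams to noncrossing lattice path pairs, invokes Corollary~\ref{cor:noncrossing} to get a lattice path matroid polytope for each cell, and then simply declares that the resulting collection ``is clearly a positroid dissection from Definition~\ref{def:pos_dissetion}.'' Everything you write after that --- the verification that each $\Gamma_i$ is full-dimensional, that the relative interiors are pairwise disjoint, and that the polytopes cover $\Delta(k,n-4)$ --- is work the paper does not do at all. You are right that these are exactly the axioms of Definition~\ref{def:pos_dissetion} and that none of them follows formally from ``each BCFW cell gives an LPM polytope inside the hypersimplex''; in particular nothing in Corollary~\ref{cor:noncrossing} prevents two distinct noncrossing pairs from yielding overlapping matroid polytopes, so disjointness and covering genuinely require an argument such as the recursive one you sketch (intertwining the BCFW recursion with deletion/contraction as in Theorem~\ref{thm:BCFW_matroidal}) or the volume count you propose. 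Neither of those closing arguments appears in the paper, and you candidly leave them unfinished, so your proposal identifies a real gap in the published proof rather than filling one of your own. One further point in your favor: you consistently work in $\Delta(k,n-4)$, which matches both the theorem statement and Corollary~\ref{cor:noncrossing} (whose rectangle is $k\times(n-k-4)$), whereas the paper's proof drifts into $\mathcal{U}_{k+4,n}$ and $\Delta(k+4,n)$, an index inconsistency you implicitly correct.
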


\begin{proof}
By Corollary \ref{cor:noncrossing} we already know that each $(k,n)$ BCFW cell corresponds to a LPM $\mathcal{M}[P_{\mathcal{L}},Q_{\mathcal{L}}]$ inside $\mathcal{U}_{k+4,n}$, where $(P_{\mathcal{L}},Q_{\mathcal{L}}) \in \mathcal{L}_{n,k,4}$. Therefore, each $(k,n)$ BCFW cell corresponds to a lattice path matroid polytope $\mathcal{P}(\mathcal{M}[P_{\mathcal{L}},Q_{\mathcal{L}}])$ which lies inside $\Delta(k+4,n) = \conv(\mathcal{U}_{k+4,n})$. Therefore, a triangulation of $\mathcal{A}_{n,k,4}$ into $(k, n)$-BCFW cells corresponds to a collection of all lattice path matroids which lie inside the uniform matroid $\Delta(k+4,n) = \conv(\mathcal{U}_{k+4,n})$, which is clearly a positroid dissection from Definition \ref{def:pos_dissetion}.     
\end{proof}

With Theorem \ref{thm:m=4_aplitu_BCFW_LPM} we establish a first notion in the direction of \emph{T-duality} for a $m=4$ amplituhedron, where in the case of $m=2$ amplituhedron \cite{m=2amplut} shows that subdivisions of the amplituhedron correspond to positroid dissections of the corresponding hypersimplex. We provide this in the case of $m=4$ amplituhedron for the BCFW triangulation which inspires for the exploration of the case of other triangulations and subdivision of $\mathcal{A}_{k,n,4}$. Also, BCFW style dissections enjoy a recursive description and can be understood as coming from splits as discussed in the case of the $m=2$ amplituhedron in \cite[Remark 4.8]{m=2amplut}, and we believe that a positroid dissection in LPM cells captures this in essence as well owing to the recursive definition of LPM polytope decompositions.

\subsection{Positive configuration spaces, weakly separated collections and connected minimal positroids}\label{subsec:pos_conf_space}

We highlight some of the connections between our study on LPM's and \cite{arkani2021positive}. Firstly, in \cite{arkani2021positive} the authors relate the \emph{positive Chow cells} of the \emph{Chow quotient} of the Grassmannian with positroidal subdivisions. Let $Ch(k,n)_{\geq 0}$ denote the nonnegative part of the Chow quotient of the Grassmannian.

\begin{theorem}[Theorem 1.1 \cite{arkani2021positive}]
There are canonical bijections between the following sets.   
\begin{itemize}
    \item The set $\{\Theta_{\Tilde{\Delta} > 0} \}$ of positive Chow cells of $Ch(k,n)_{\geq 0}$
    \item The set $D(k,n)$ of regular positroidal subdivisions of $\Delta(k,n)$.
    \item The set of cones in the positive tropical Grassmannian $\text{Trop} \> Gr^{+}(k, n)$, the space of valuations of positive Puiseux series points $Gr(k, n)(\mathcal{R} > 0)$
    \item The set of cones in the positive Dressian $Dr(k,n)$, which satisfy the three term positive Pl\"ucker relations.
\end{itemize}
\end{theorem}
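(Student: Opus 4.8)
The plan is to prove the four sets are in bijection by designating the set $D(k,n)$ of regular positroidal subdivisions of $\Delta(k,n)$ as a hub, constructing a canonical map from each of the other three sets to it, and checking that these maps are bijections compatible with the natural fan/stratification structures; transitivity then gives the full statement. So the first step is to organize the proof around the single combinatorial object ``regular positroidal subdivision of $\Delta(k,n)$'' and recast each of the other three descriptions as a different incarnation of it.

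First I would dispatch the two tropical items. The identification of the cones of $\Trop\,\Gr^{+}(k,n)$ with the cones of $\Dr^{+}(k,n)$ is Theorem 3.9 \cite{speyer2021positive}, and the fact that the space of valuations of positive Puiseux series points of $\Gr(k,n)$ is exactly $\Trop\,\Gr^{+}(k,n)$ is the positive analogue of the Fundamental Theorem of tropical geometry: for $x \in \Gr(k,n)(\mathcal{R}>0)$ the vector $\mathrm{val}(x)$ of valuations of Pl\"ucker coordinates lies in $\Trop\,\Gr^{+}(k,n)$, and every point of $\Trop\,\Gr^{+}(k,n)$ arises this way. For the bijection between cones of $\Dr^{+}(k,n)$ and $D(k,n)$ I would use the coincidence of the Pl\"ucker and secondary fan structures on the Dressian together with Theorem 4.3 \cite{speyer2021positive}: a weight vector $w$ is a positive tropical Pl\"ucker vector if and only if every face of the regular subdivision $\Sigma_{w}$ of $\Delta(k,n)$ is a positroid polytope. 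Hence the secondary cone of a regular positroidal subdivision is precisely a cone of $\Dr^{+}(k,n)$ and conversely, and this correspondence respects face relations by construction.

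It remains to match positive Chow cells with regular positroidal subdivisions, which is the genuinely new ingredient. Here I would use the Chow quotient $\mathrm{Ch}(k,n)$ in the style of Kapranov--Sturmfels--Zelevinsky: a point $V \in \Gr(k,n)$ determines, via its torus orbit closure and the associated Chow form, a point of a toric variety whose fan refines the secondary fan of $\Delta(k,n)$, and the boundary strata of $\mathrm{Ch}(k,n)$ record flat limits of torus translates $t\cdot V$, namely sums of toric cycles $\sum_{F}[X_{F}]$ indexed by the facets $F$ of a regular subdivision of $\Delta(k,n)$. Taking $V$ to be (the special fiber of) a positive Puiseux point $x$, the degeneration $t\cdot x$ as $t\to 0$ produces the Chow cycle attached to the subdivision $\Sigma_{\mathrm{val}(x)}$; positivity of $x$ forces each $X_{F}$ to be a positroid variety, so $\Sigma_{\mathrm{val}(x)}$ is positroidal, and conversely Theorem 4.3 \cite{speyer2021positive} guarantees every regular positroidal subdivision is realized. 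Defining the positive Chow cell $\Theta_{\widetilde{\Delta}>0}$ to be the locus of positive points with prescribed limit cycle yields a map $\{\Theta_{\widetilde{\Delta}>0}\} \to D(k,n)$ that is compatible with the tropical bijections above.

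The main obstacle I expect is exactly this last identification. One must show that the cell decomposition of $\mathrm{Ch}(k,n)_{\geq 0}$ is indexed by subdivisions and nothing coarser or finer, which requires (i) that a positive point and its limit cycle mutually determine one another up to the torus action, so that the fibers of $x \mapsto \Sigma_{\mathrm{val}(x)}$ are exactly the Chow cells, and (ii) that the closure order on Chow cells matches refinement of subdivisions, so that the bijection is an isomorphism of posets (and of fans) rather than merely of underlying sets. A secondary subtlety is establishing the positive Fundamental Theorem with enough control on positivity of coefficients --- this is where one uses that the three-term positive Pl\"ucker relations already cut out all of $\Trop\,\Gr^{+}(k,n)$, so that surjectivity of tropicalization does not require the full Pl\"ucker ideal. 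Once these are in place, the compatibility of the three bijections with the fan structures is routine, and the theorem follows.
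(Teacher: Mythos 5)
The paper does not prove this statement at all: it is imported verbatim as Theorem 1.1 of \cite{arkani2021positive} and used as a black box (the surrounding text only remarks that the equivalences restrict to the LPMfan because LPMs are positroids). So there is no in-paper proof to compare your argument against; what you have written is a reconstruction of the strategy of the cited source.

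As such a reconstruction, your outline is faithful: the identification of $\Trop^{+}\Gr(k,n)$ with $\Dr^{+}(k,n)$, the positive fundamental theorem for the Puiseux-valuation description, and the use of \cite[Theorem 4.3]{speyer2021positive} to match cones of the positive Dressian with regular positroidal subdivisions are all the right ingredients, and the Kapranov--Sturmfels--Zelevinsky degeneration picture is indeed how \cite{arkani2021positive} connects the Chow quotient to subdivisions. However, your argument stops short of being a proof precisely at the point you flag as the main obstacle: you \emph{assert} that positivity of the degenerating point forces each cycle $X_{F}$ to be a positroid variety, and you list as requirements (i) that the fibers of $x \mapsto \Sigma_{\mathrm{val}(x)}$ are exactly the Chow cells and (ii) that closure order matches refinement, but you do not establish either. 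Those two points are the actual content of the Chow-quotient bijection and occupy most of the work in \cite{arkani2021positive}; without them your third paragraph is a statement of intent rather than an argument. If you intend to rely on the cited theorem, as this paper does, say so; if you intend to prove it, the missing steps are exactly (i) and (ii).
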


As LPM'S are positroids too, all these equivalences also are true when restricted to the LPMfan. 

We also delve into the connection between \emph{cluster} of a matroid, \emph{weakly separated collections} \cite{oh2015weak} and snakes. We fix some notations relevant to our discussion. We
deﬁne the \emph{cyclic ordering} \cite{oh2011positroids} (referred as the \emph{$t$-th Gale order} in \cite{benedetti2023lattice}) $\leq _{t}$ on $[n]$ for some $t \in [n]$ by the total order $t \leq_{t} t + 1 \leq_{t} \hdots \leq_{t} n \leq_{t} 1 \hdots \leq_{t} t - 1$. For $I ,J \in \binom{[n]}{k}$, where

\[ I = \{ i_{1} , \hdots , i_{k} \}, \quad i_{1} \leq_{t} i_{2} \hdots \leq_{t} i_{k}  \]

and 

\[ J = \{ j_{1} , \hdots , j_{k} \}, \quad j_{1} \leq_{t} j_{2} \hdots \leq_{t} j_{k}   \]

then 

\[ I \leq_{t} J \quad \text{if \> and \> only \> if\> } i_{1} \leq_{t} j_{1} , \hdots , i_{k} \leq_{t}  j_{k}  \]

\begin{definition}
For each $I \in  \binom{[n]}{k}$ and $t \in [n]$ , we deﬁne the \emph{cyclically shifted Schubert} matroid as

\[ SM_{I}^{t} = \Biggl\{ J \in \binom{[n]}{k} \>  | \> I \leq_{t} J \Biggl\} \]
\end{definition}

We recall the definition for weakly separated sets from \cite{oh2015weak},

\begin{definition}
 Let $I$ and $J$ be two subsets of $[n]$. $I$ and $J$ are said to be weakly separated if either
\begin{itemize}
    \item $|I| \leq |J|$ and $I \setminus J$ can be partitioned as $I_{1} \cup I_{2}$ such that $I_{1} \prec J \setminus I \prec I_{2}$ or
    \item $|J| \leq |I|$ and $J \setminus I$ can be partitioned as $J_{1} \cup J_{2}$ such that $J_{1} \prec I \setminus J \prec J_{2}$
\end{itemize}

where $A \prec B$ indicates that every element of $A$ is less than every element of $B$.   
\end{definition}

Equivalently, the definition can be stated as the sets $I$ and $J$ $\in$ $\binom{[n]}{k}$ are said to be \emph{weakly separated} if we cannot find cyclically ordered elements $a,b,c,d$ such that $a,c \in I \setminus J$ and $b,d \in J \setminus I$ (also along with the symmetrical statement for $I$ and $J$ swapped).

We also recall the definition of \emph{Grassmann necklaces} \cite{https://doi.org/10.48550/arxiv.math/0609764,oh2015weak,oh2011positroids,postnikov2018positive}.

\begin{definition}
A Grassmann necklace is a sequence $I = (I_{1} , \hdots , I_{n})$ of subsets $I_{r} \subseteq [n]$ such that:

\begin{itemize}
    \item if $i \in I_{i}$ then $I_{i + 1} = (I_{i} \setminus \{i \}) \cup \{ j \}$ for some $j \in [n]$ ,
    \item if $i \not \in I_{i}$ then $I_{i + 1} = I_{i}$
\end{itemize}

The indices are taken modulo $n$. In particular, we have $| I_{1} | = \hdots = | I_{n} |$ .
\end{definition}

There exists a canonical bijection between positroids and Grasmann necklaces. We state the characterization of the \emph{cluster} of a matroid (Definition \ref{def:cluster}), in terms of weakly separated sets and Grassmann necklaces \cite{oh2015weak}

\begin{lemma}
A subset $\mathcal{C} \subseteq \mathcal{M}$ is a cluster if it is pairwise weakly separated, has size $\text{dim}(\mathcal{M})$ + 1, and contains the Grassmann necklace $\mathcal{I}$ of ${M}$. Any pairwise
weakly-separated subset of $\binom{[n]}{k}$ can be extended to a cluster.    
\end{lemma}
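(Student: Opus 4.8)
My plan is to obtain this statement — which is a transcription of results of Oh--Postnikov--Speyer \cite{oh2015weak}, together with the cluster structure on positroid varieties discussed in \cite{arkani2021positive}, into the present notation — by assembling two structural inputs rather than arguing from scratch. The first input is the \emph{purity} theorem: every inclusion-maximal pairwise weakly separated collection of $k$-subsets contained in the positroid $\mathcal{M}$ has the same cardinality, namely $\dim(\mathcal{M})+1$, where $\dim(\mathcal{M})$ is the dimension of the positroid cell $S_{\mathcal{M}}$ (equivalently, one less than the number of faces of a reduced plabic graph for $\mathcal{M}$); moreover every such maximal collection contains the Grassmann necklace $\mathcal{I}$ of $\mathcal{M}$, since each necklace element is weakly separated from every basis of $\mathcal{M}$ and therefore cannot be omitted. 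The second input is the identification of the \emph{Pl\"ucker seeds} of the cluster algebra $\mathbb{C}[\widetilde{\pi}_{\mathcal{M}}]$ -- those seeds all of whose cluster and frozen variables are Pl\"ucker coordinates $p_{J}$ with $J \in \mathcal{M}$ -- with exactly these maximal weakly separated collections, the frozen part corresponding to $\mathcal{I}$. This is the positroid analogue of Scott's theorem on cluster structures for the Grassmannian, and I would invoke \cite{oh2015weak, arkani2021positive} for it.

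Granting these, the first assertion is immediate: if $\mathcal{C} \subseteq \mathcal{M}$ is pairwise weakly separated, has size $\dim(\mathcal{M})+1$, and contains $\mathcal{I}$, then by purity it is inclusion-maximal -- any strictly larger weakly separated collection would have cardinality exceeding the common value $\dim(\mathcal{M})+1$ -- hence it indexes a Pl\"ucker seed of $\mathbb{C}[\widetilde{\pi}_{\mathcal{M}}]$, i.e.\ a cluster in the sense of Definition~\ref{def:cluster}. For the second assertion, given an arbitrary pairwise weakly separated $\mathcal{C}' \subseteq \binom{[n]}{k}$, I would first adjoin the $n$ cyclic intervals $\{i, i+1, \dots, i+k-1\}$ read modulo $n$ (these are weakly separated from every $k$-subset, so the enlarged collection is still weakly separated), and then greedily add weakly separated $k$-subsets until none can be added. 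The result is inclusion-maximal, hence of size $k(n-k)+1$ by purity, hence a cluster, and it contains $\mathcal{C}'$.

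The main obstacle is the second input, namely matching the Pl\"ucker seeds of $\mathbb{C}[\widetilde{\pi}_{\mathcal{M}}]$ with the maximal weakly separated collections that contain $\mathcal{I}$. This is the substantive point: it rests on the construction of the cluster structure on $\mathbb{C}[\widetilde{\pi}_{\mathcal{M}}]$ from reduced plabic graphs for $\mathcal{M}$, and on the dictionary ``reduced plabic graph for $\mathcal{M}$ $\leftrightarrow$ maximal weakly separated collection in $\mathcal{M}$ $\leftrightarrow$ Pl\"ucker seed''. Under this dictionary the cardinality count ($\dim(\mathcal{M})+1$ faces, of which $|\mathcal{I}|$ are boundary faces) and the extension property both reduce to the mutation-connectivity of reduced plabic graphs, which is where the genuine work of \cite{oh2015weak} lies. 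I would therefore cite the combinatorial inputs from \cite{oh2015weak} and the cluster-algebraic input from \cite{arkani2021positive}, and not reprove them here.
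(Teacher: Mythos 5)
The paper offers no proof of this lemma: it is imported verbatim as a known result of Oh--Postnikov--Speyer \cite{oh2015weak} together with the cluster structure on positroid varieties from \cite{arkani2021positive}, which is exactly how you treat it. Your assembly of the purity theorem, the necklace-containment property, and the plabic-graph/Pl\"ucker-seed dictionary is correct and matches the intended provenance, so there is nothing to reconcile.
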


As one of the takeaways in \cite{arkani2021positive}, the authors state this result concerning minimal connected positroids and clusters for them 

\begin{lemma}\label{lem:min_pos}
A connected positroid $\mathcal{M}$ is minimal if and only if  the associated reduced plabic graph $G(C)$ is a tree, for
some cluster $\mathcal{C}$ of $\mathcal{M}$. In this case, $\mathcal{M}$ has a unique cluster $\mathcal{C} \subseteq \mathcal{M}$.    
\end{lemma}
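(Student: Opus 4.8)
The plan is to pass through the standard dictionary between clusters of the positroid $\mathcal{M}$ and reduced plabic graphs, and to recognize ``tree'' as the combinatorial shadow of the positroid cell $S_{\mathcal{M}}$ having the smallest possible dimension. Recall that a cluster $\mathcal{C}$ of $\mathcal{M}$ corresponds to a reduced plabic graph $G=G(\mathcal{C})$ with trip permutation $\pi_{\mathcal{M}}$, whose faces are labelled by the elements of $\mathcal{C}$, the boundary faces giving the frozen variables and the bounded faces the mutable ones; in particular the number of faces of $G$ equals $|\mathcal{C}|=\dim S_{\mathcal{M}}+1$, and any two reduced plabic graphs of $\mathcal{M}$ are linked by square moves, so in bipartite trivalent form they all share the same counts of internal vertices $V$, edges $E$ and faces $F$. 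I would first turn ``$G$ is a tree'' into a numerical condition, then into a statement about $\dim S_{\mathcal{M}}$, and finally into minimality of $\mathcal{M}$; the uniqueness clause will drop out of the tree condition directly.

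For the first step I would compute $F$ by the Euler characteristic of the closed disk: taking the $n$ boundary marked points and the $V$ internal vertices as $0$-cells, the $E$ edges of $G$ together with the $n$ boundary arcs as $1$-cells, and the $F$ faces as $2$-cells, one gets $V-E+F=1$, i.e. $F=1+(E-V)$. Since $\mathcal{M}$ is connected the graph $G$ is connected, hence $G$ is a tree precisely when its number of edges is one less than its number of vertices, $E=(n+V)-1$, equivalently $F=n$, equivalently $\dim S_{\mathcal{M}}=n-1$. Moreover $\dim S_{\mathcal{M}}\ge n-1$ always holds for a connected positroid, because the moment map restricts to a surjection $S_{\mathcal{M}}\twoheadrightarrow\P_{\mathcal{M}}^{\circ}$ onto the relative interior of the base polytope, which is $(n-1)$-dimensional when $\mathcal{M}$ is connected; so the only content left is the characterization of equality.

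The heart of the argument, and the step I expect to be the main obstacle, is the equivalence ``$\dim S_{\mathcal{M}}=n-1\iff\mathcal{M}$ is a minimal matroid.'' In one direction, a connected minimal positroid is (isomorphic to) a snake $T_{k,n}$, and I would exhibit its reduced plabic graph explicitly --- reading it off from the ribbon/Le-diagram of $T_{k,n}$ --- and check by inspection that it is a tree (equivalently, that its Le-diagram has exactly $n-1$ plus-signs). In the other direction I would argue on a plabic tree $G$ directly: a tree embedded in the disk encloses no region, so every face is a boundary face, and every perfect orientation of $G$ differs from a fixed one only by reversing directed paths between boundary vertices, so the source sets $I_{\mathcal{O}}$ --- i.e. the bases of $\mathcal{M}_{G}$ --- are exactly those reachable from the Grassmann necklace by such reversals, and enumerating them yields the minimum number of bases of a connected matroid of rank $k$ on $[n]$; hence $\mathcal{M}_{G}$ is minimal. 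Combined with the previous paragraph this gives ``$\mathcal{M}$ is minimal $\iff G(\mathcal{C})$ is a tree for some (equivalently every) cluster $\mathcal{C}$.'' Finally, the uniqueness clause is then immediate: a tree has no cycles, in particular no square face, so no square move can be applied, and since all reduced plabic graphs of $\mathcal{M}$ are connected under square moves, $G(\mathcal{C})$ is the only one and $\mathcal{C}$ is the only cluster --- equivalently, every face of $G(\mathcal{C})$ being a boundary face, the exchange quiver has no mutable vertices and there is nothing to mutate. The remaining ingredients --- the cluster/plabic-graph dictionary, the Euler bookkeeping, and connectivity of reduced plabic graphs under square moves --- are standard, so the genuine work is concentrated in identifying the minimal-dimension connected positroid cells with the snakes $T_{k,n}$.
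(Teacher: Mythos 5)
The paper does not actually prove this statement: it is quoted verbatim from \cite{arkani2021positive} (where it appears among the characterizations of minimal connected positroids in Section 7), so there is no in-paper argument to measure you against. Judged on its own, your outline has the right skeleton and is essentially the route the cited source takes: the dictionary ``faces of a reduced plabic graph $\leftrightarrow$ elements of a cluster,'' the count $|\mathcal{C}|=F=\dim S_{\mathcal{M}}+1$, the Euler-characteristic translation of ``$G$ is a tree'' into $F=n$, i.e.\ $\dim S_{\mathcal{M}}=n-1$, the lower bound $\dim S_{\mathcal{M}}\ge n-1$ via the moment map onto the $(n-1)$-dimensional base polytope of a connected matroid, and the uniqueness clause from the absence of interior square faces (square moves being the only moves that change the face-label collection). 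All of that is correct bookkeeping.

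The genuine gap is exactly where you locate it, and it is larger than your sketch acknowledges. For the direction ``minimal $\Rightarrow$ tree'' you propose to exhibit the reduced plabic graph of the standard snake $T_{k,n}$ and check it is a tree; but a connected positroid that is minimal in the sense of this paper (fewest bases) is only \emph{abstractly} isomorphic to $T_{k,n}$, and the quantity $\dim S_{\mathcal{M}}$ you are computing depends on the positroid structure (the cyclic order on the ground set), not just the isomorphism type. You need an argument covering every positroid realization of the minimal matroid, not just the standard lattice-path labeling --- for instance by showing directly that any connected positroid with exactly $k(n-k)+1$ bases has an $n$-face plabic graph, or by classifying which orderings of $T_{k,n}$ yield positroids. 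For the converse direction, the claim that enumerating source sets of perfect orientations of a plabic tree ``yields the minimum number of bases'' is asserted rather than proved; reversing directed boundary-to-boundary paths in a tree does generate all perfect orientations, but turning that into the exact count $k(n-k)+1$ (rather than merely some number of bases) is a real combinatorial argument that your proposal does not supply. Until those two steps are carried out, the central equivalence ``$\dim S_{\mathcal{M}}=n-1\iff\mathcal{M}$ minimal'' --- and hence the lemma --- remains unproved; everything else in your write-up is peripheral to it.
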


We already know that for lattice path matroids, snakes are minimal matroids. Hence, by Lemma \ref{lem:min_pos}, we obtain a unique cluster in this case. We explain this with one of our running examples; the snake decomposition of $\mathcal{U}_{3,6}$ shown in Figure \ref{fig:LPM_(3,6)}. We obtain the cluster $\mathcal{C}_{1}$ for the snake $M_{1}$ 

\[ \mathcal{C}_{1}  = \{123,234,134,124,125,126\} \]

It is easy to verify that $C_{1}$ is weakly separated, contains the Grassmann necklace for $\mathcal{M}_{1}$ and is of cardinality = dim$(\mathcal{M}_{1}) +1 = 5+1 = 6$. Likewise, we obtain unique clusters for all the snakes. The corresponding graphs for these snakes have been described in Figure \ref{fig:LPM_(3,6)_dec_perm}.

We conclude with another interesting observation. For both the \emph{snake split} (Definition \ref{def:split_snake}) matroids, we notice that they exactly contain $k(n-k) +1$ elements. This is exactly equal to the cardinality of the \emph{maximal weakly separated collection}, which is the maximal collection of pairwise weakly separated elements inside the matroid $\mathcal{M}$ and the bound on its cardinality was famously conjectured by Leclerc and Zelevinsky and proven to be true in \cite{oh2015weak}. However, we realize that the elements in a snake split are not all pairwise weakly separated, so they are not examples of maximal weakly separated collections. For example for the snake decomposition of $\mathcal{U}_{3,6}$, the snake split $M_{1}$ has the elements $124$ and $135$ which are not pairwise weakly separated.

\section{Future Perspectives}\label{sec:Sec7}

We utilize this section to condense our discussion and to highlight the takeaways from our results. We also point to subsequent questions which arise from our work.

Firstly, we want to mention a recent work concerning lattice path matroid  decompositions into snakes and \emph{alcoved triangulation} \cite{benedetti2023lattice} in which the authors prove results based on the snake decomposition of LPM and also discuss results on Ehrhart theory of LPM. They prove that the alcoved triangulation of an alcoved polytope is regular. We observe that our discussion pertaining to lattice path matroidal subdivisions being regular generalizes this result for LPM. We point the reader to Figure 1 in \cite{benedetti2023lattice} to understand the context of where LPM lie with respect to other well-known families of matroids. 

We also want to point the reader to \cite{BrandtSpeyer}, where the authors show that there exist finest matroid subdivisions of matroid polytopes that do not contain matroid polytopes of indecomposable matroids as maximal cells. Hence, it might be a worthwhile question to ask for which other families of matroids apart from positroids,  a result like Corollary \ref{cor:indecompose} can be obtained, for example, the class of transversal matroids might be a good candidate to be considered. Also, a natural generalization of this question would be to consider the finest subdivisions of Dressians for arbitrary matroids, and not necessarily the hypersimplex, and see if we can recover some of these results.

With the introduction of the notion of LPMfan we believe that there are many questions that can be asked just pertaining to its structure and we believe this might interest readers for further research. Some of the interesting queries could be, to understand if there exists a bound on the number of LPM splits and how it behaves with respect to the Dressian, computation of the dimension of the LPMfan, etc. We believe there is much more to analyze about the LPMfan and we aim to fulfill this in future work.

We also acknowledge via \cite{benedetti2022lattice} recursive relations between LPM's defined by quotients and direct sums. This could be really interesting to understand LPM subdivisions for larger LPM polytopes. We wish to employ such a technique for computing LPMfan recursively. Additionally, it would be interesting to inquire about the specific Pl\"ucker relations that are satisfied by points corresponding to LPM subdivisions, which lie in the Dressian. We already know that they would be satisfying the positive Pl\"ucker relations owing to the fact that LPM are positroids, but they might be even further refined and this could be done by analyzing the forbidden minors for a matroid to be LPM, classified in \cite{bonin2010lattice}.

One of our future goals is also to find an equivalent of Theorem \ref{thm:BCFW_rec_m=2} for LPM dissections. Also, in \cite{m=2amplut}, the authors provide a characterization of the positroid polytope in the form of the following statement

\begin{theorem}[Theorem 3.9 \cite{m=2amplut}]
Let $M$ be a matroid of rank $k$, and consider the
matroid polytope  $P_{M}$. It is a positroid polytope if and only if all of its two-dimensional faces are positroid polytopes.    
\end{theorem}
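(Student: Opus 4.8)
The plan is to prove the two implications separately: one is short given the cyclic-interval description of positroid polytopes, and the other needs real work. For the forward implication I would use the characterization of \cite{ardila2016positroids} that a matroid polytope $P_M\subseteq\Delta(k,n)$ is a positroid polytope if and only if it is cut out from $\Delta(k,n)$ by inequalities of the form $\sum_{i\in[a,b]}x_i\le c_{[a,b]}$, where $[a,b]$ ranges over cyclic intervals only. A face $F$ of such a polytope is obtained by turning some of these cyclic-interval inequalities into equalities, and on $\Delta(k,n)$ an equality $\sum_{i\in[a,b]}x_i=c$ is the same as the cyclic-interval inequality $\sum_{i\notin[a,b]}x_i\le k-c$; so $F$ is again cut out from $\Delta(k,n)$ by cyclic-interval inequalities, and since it is a face of a matroid polytope it is a matroid polytope, hence a positroid polytope. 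In particular every $2$-face of a positroid polytope is a positroid polytope.

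For the converse I argue the contrapositive: if $M$ is not a positroid, I exhibit a $2$-face of $P_M$ that is not a positroid polytope. Since a connected matroid on $m$ elements has a matroid polytope of dimension $m-1$, a $2$-dimensional matroid polytope is either a triangle (coming from $\U_{1,3}$ or $\U_{2,3}$ on three distinguished coordinates, with the other $n-3$ coordinates frozen at $0$ or $1$) or a quadrilateral $P_N$ with $N=\U_{1,2}^{\{a,c\}}\oplus\U_{1,2}^{\{b,d\}}$ on four distinguished coordinates $a,b,c,d$ and the rest frozen. Triangles are always positroid polytopes, while such a quadrilateral $P_N$ is a positroid polytope exactly when the parallel pairs $\{a,c\}$ and $\{b,d\}$ do not interleave in the cyclic order on $\{a,b,c,d\}$ induced from $[n]$ (loops and coloops do not affect positroidality, and this is the rank-$2$ criterion: a loopless rank-$2$ matroid is a positroid iff its parallel classes are cyclic intervals). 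So it suffices to produce, as a $2$-face of $P_M$, a \emph{crossing square}: a quadrilateral whose two parallel pairs interleave.

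To locate a crossing configuration I again use \cite{ardila2016positroids}: writing $M=M_1\oplus\dots\oplus M_t$ for the decomposition of $M$ into connected components, $M$ is a positroid if and only if each $M_i$ is a positroid and the partition of $[n]$ into their ground sets is non-crossing; and a connected matroid is a positroid if and only if every flacet --- a closed set $S$ with $M|_S$ and $M/S$ connected, contributing the facet $\sum_{i\in S}x_i\le r_M(S)$ --- is a cyclic interval. Since each $P_{M_i}$, with the remaining components frozen at a vertex, is a face of $P_M$ of strictly smaller dimension, and a $2$-face of $P_{M_i}$ that is not a positroid polytope yields one of $P_M$, an induction on $n$ reduces to two cases. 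If $M$ is connected and not a positroid, it has a non-cyclic flacet $S$, and I pass to the facet $P_M\cap\{\sum_{i\in S}x_i=r_M(S)\}\cong P_{M|_S}\times P_{M/S}$. Otherwise every $M_i$ is a positroid but their ground-set partition is crossing, and I take two components with crossing ground sets. Either way I obtain, on two disjoint parts of $[n]$, connected matroids $A$ and $B$ with distinguished elements $a,c\in A$ and $b,d\in B$ such that $a,b,c,d$ appear in this cyclic order.

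The remaining step --- the localization, which I expect to be the hardest part --- turns this crossing configuration into an honest $2$-face. Since $A$ is connected, $a$ and $c$ lie on a common circuit $C$; I would contract $C\setminus\{a,c\}$, then contract all but $a$ of a suitably chosen basis of the resulting minor, and then delete the surviving non-loops other than $c$, so that the $A$-part collapses to the parallel pair $\{a,c\}$ together with loops and coloops; I do the symmetric thing to $B$ using a circuit through $b$ and $d$; and I freeze whatever is left of $[n]$ by contracting a basis of each untouched component. Each of these steps is a face move ($x_e=1$ for a contraction, $x_e=0$ for a deletion), all of the moves take place on disjoint parts of the ground set so that none obstructs another, and one checks the resulting intersection of facets is nonempty by assembling a basis of $M$ that meets all the constraints; hence it is a genuine face of $P_M$, equal to $P_N$ with $N=\U_{1,2}^{\{a,c\}}\oplus\U_{1,2}^{\{b,d\}}$ plus trivial summands --- a $2$-dimensional crossing square, and therefore not a positroid polytope. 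The delicate points, and the reason this is the crux, are: showing the successive contractions keep $\{a,c\}$ a parallel pair rather than turning $a$ or $c$ into a loop (using that contraction within a flat preserves parallelism, together with the choice of the basis one contracts along), checking that after all the moves the surviving ground set is exactly $\{a,b,c,d\}$ so the face has dimension exactly $2$, and checking at each stage that one contracts a non-loop and deletes a non-coloop of the current minor. Granting these verifications, the contrapositive --- and hence the theorem --- follows. I would organize the write-up as: (i) the forward direction; (ii) the classification of $2$-dimensional matroid polytopes and the positroid criterion for quadrilaterals; (iii) the reduction, by induction on $n$, to a crossing pair of disjoint connected minors; and (iv) the localization lemma.
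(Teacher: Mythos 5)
The paper itself offers no proof of this statement: it is quoted from \cite{m=2amplut} as a known characterization (to motivate Question \ref{ques:Question1}), so there is no in-paper argument to compare yours against. Judged on its own, your proposal is a sound reconstruction of the argument from the literature, resting on the same two inputs from \cite{ardila2016positroids} that the original proof uses: a matroid polytope is a positroid polytope iff it is cut out of $\Delta(k,n)$ by cyclic-interval inequalities, and a matroid is a positroid iff its connected components are positroids whose ground sets form a non-crossing partition. Your forward direction is correct; the only point to make explicit is that after turning inequalities into equalities you should renormalize the constants $c_{[a,b]}$ down to the ranks $r([a,b])$ of the face's matroid, which is harmless since any valid constant is at least the maximum of the linear functional over the polytope. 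In the converse, the case analysis is exhaustive and terminates: the connected case does not shrink $n$, but it exits the recursion at once by handing you the crossing pair $M|_S$, $M/S$ (both connected, each part of size at least two because a set or co-set of size at most one is automatically a cyclic interval). The localization lemma is indeed the crux, and your sketch survives the delicate points you flag: contracting $C\setminus\{a,c\}$ for a circuit $C$ through $a$ and $c$ makes $\{a,c\}$ a two-element circuit of the minor; contracting the rest of a basis through $a$ cannot put $c$ in the closure of the contracted set without violating independence, so $\{a,c\}$ stays a parallel pair; every subsequent deletion is of a non-coloop of a rank-one summand; and all moves live in distinct coordinates of a direct sum, so the iterated face is nonempty and equals $\P_{N}$ with $N=\U_{1,2}^{\{a,c\}}\oplus\U_{1,2}^{\{b,d\}}$ plus loops and coloops, which fails the non-crossing criterion exactly because $a,b,c,d$ interleave. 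The one thing I would insist on is that you write the localization lemma out in full rather than as a plan, since it carries essentially all the weight of the hard direction.
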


We are able to obtain a one-way implication similar to this in the case of LPM polytopes as follows,

\begin{lemma}\label{lem:LPM_faces_are_lpm}
The faces of a lattice path matroid polytope $P_{M[P,Q]}$ are also lattice path matroid polytopes.
\end{lemma}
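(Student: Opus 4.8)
The plan is to describe the facial structure of $\P_{\M[P,Q]}$ directly and show that each face is again a lattice path matroid polytope. Recall that for any matroid $\M$, the faces of the matroid polytope $\P_\M$ are themselves matroid polytopes, and each proper face is obtained by setting some linear functional $\sum_{i\in S} x_i$ to its maximum or minimum value over $\P_\M$. More precisely, by the standard theory (Feichtner–Sturmfels, or Ardila–Klivans), every face of $\P_\M$ is a product of matroid polytopes coming from the restriction/contraction decomposition: there is a chain of subsets along which the face is $\P_{\M|_{A}} \times \P_{\M/A}$ up to translation, and iterating this, a face corresponds to a \emph{flag} of subsets and is the matroid polytope of a direct sum of minors of $\M$. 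So it suffices to show (i) that minors (restrictions and contractions by a single element, hence by any subset) of a lattice path matroid are lattice path matroids, and (ii) that direct sums of lattice path matroids are lattice path matroids.

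For step (i), I would use the concrete ``skew shape'' description in Definition \ref{def:LPM}. Deleting the element $i$ from $\M[P,Q]$ corresponds to collapsing the $i$-th column (the $i$-th step position) of the lattice grid: the bounding paths $P$ and $Q$ descend to paths on the smaller grid $\U_{r,n-1}$ (if $i$ was never forced to be an $N$-step or never forced to be an $E$-step, this is straightforward; if $i$ lies strictly between $p_j$ and $q_j$ for some $j$, deletion merges the constraint appropriately), and the resulting set system is again of the form $\{ \{i_1,\dots,i_r\} : p'_j \le i_j \le q'_j\}$ on the reindexed ground set. Contraction of $i$ is dual: contracting corresponds to forcing the step $i$ into every basis, which is realized by cutting the skew shape along the appropriate edge. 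Both operations can be checked to preserve the ``interval on each coordinate'' form; alternatively one can cite Bonin–de Mier, who show lattice path matroids are closed under minors. For step (ii), if $\M[P_1,Q_1]$ lives on $\U_{r_1,n_1}$ and $\M[P_2,Q_2]$ on $\U_{r_2,n_2}$, then concatenating the two skew shapes diagonally (placing the second grid at the northeast corner of the first) produces a new NE-path pair whose lattice path matroid is exactly the direct sum; this is immediate from the product structure of the base sets.

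Finally I would assemble these: an arbitrary face $F$ of $\P_{\M[P,Q]}$ equals the matroid polytope of $\bigoplus_\ell \N_\ell$ where each $\N_\ell$ is a minor of $\M[P,Q]$, hence by (i) each $\N_\ell$ is a lattice path matroid, and by (ii) the direct sum is too; therefore $F$ is a lattice path matroid polytope. The main obstacle I anticipate is bookkeeping in step (i): writing down explicitly how the bounding paths $P,Q$ transform under deletion/contraction of an element that is ``internal'' to the skew shape (i.e. a position that is an $N$-step in some but not all bases) requires a careful case analysis of where $i$ sits relative to the forced-$N$, forced-$E$, and free regions, and making sure the resulting inequalities $p'_j \le i_j \le q'_j$ still define the right interval. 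One clean way to sidestep the messiest cases is to only prove closure under deletion and contraction of \emph{coloops} and \emph{loops} plus the general single-element minor via the transversal presentation, and then invoke the flag-of-minors description of faces; but the direct combinatorial argument on skew shapes is more self-contained and is what I would write up, citing \cite{bonin2003lattice} for the minor-closure fact if a fully detailed verification becomes unwieldy.
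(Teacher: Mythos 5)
Your proof is correct and rests on the same two pillars as the paper's argument: closure of lattice path matroids under minors and under direct sums. The route to those pillars is slightly different. The paper reduces to facets and invokes the Joswig--Schroeter characterization of facets of matroid polytopes (a facet is induced either by a hypersimplex facet, i.e.\ a single-element deletion or contraction, or by a hypersimplex split along an $F$-hyperplane for a flat $F$, giving the polytope of $\M[P,Q]|F\oplus \M[P,Q]/F$), and then the passage from facets to arbitrary faces is left as an implicit induction on dimension. You instead invoke the Ardila--Klivans/Feichtner--Sturmfels description of \emph{all} faces at once: every face is the matroid polytope of a direct sum of minors indexed by a flag of subsets. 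Your version buys a cleaner one-step reduction that avoids the unstated induction; the paper's version needs only the facet-level statement together with the specific flat decomposition. In both cases the combinatorial core --- that deletion, contraction, restriction and direct sum preserve the lattice path property, as established by Bonin and de~Mier --- is identical, and your explicit skew-shape bookkeeping for single-element minors is a self-contained substitute for the citation the paper relies on; if you write that case analysis out, take care with positions that are north steps in some but not all bases, or simply cite the minor-closure result as you suggest.
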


\begin{proof}
It is clear that it is sufficient to prove the claim for the facets of an LPM polytope $P_{M[P,Q]}$. We utilize the characterization of facets of matroids polytopes described in \cite[Proposition 7]{joswig2017matroids} which says that facets of a matroid polytope are either induced by hypersimplex facets or hypersimplex splits. If the facet is induced by a hypersimplex facet, we know that these correspond to matroidal deletions and contractions, and LPM are closed under these operations \cite{bonin2006lattice}. Hence, the facet of an LPM polytope is again an LPM polytope in this case. Alternatively, if the facet is induced by a hypersimplex split, we know that it is induced by a \emph{F-hyperplane} \cite{joswig2017matroids} where $F$ is a flat of the LPM $M[P,Q]$, such that $0 < \text{rank}(F) < \#F$, in which case the facet can be described as $P_{M[P,Q]}(F) = P_{(M[P,Q]  \> | \> F \oplus M[P,Q]/ F)}$. Since LPM are also closed under direct sum and restrictions \cite{bonin2010lattice}, therefore this facet is again a LPM polytope.     
\end{proof}

We do highlight the fact that a characterization of snake polytopes does exist, and it concludes that snake polytopes are unimodular equivalent to order polytopes of zig-zag posets \cite[Theorem 4.7]{Knauer2018}. Additionally, the facial structure of LPM has also been classified in terms of certain sets of deletions, contractions and direct sums in \cite{an2020facial}.

\begin{remark}
Lemma \ref{lem:LPM_faces_are_lpm} appears also as a result in \cite[Theorem 3.12]{bidkhori2012lattice} however the argument there appears incomplete since they only consider hypersimplex facets in their proof and not the facets that get induced via hyperplane splits.   
\end{remark}

Another important connection to our results which we want to highlight is the work of Fink and Rincon on \emph{Stiefel tropical linear spaces} \cite{fink2015stiefel}. For the uninitiated, the \emph{Stiefel} map assigns a $k \times n$ matrix over a field $\mathbb{K}$ to an element in the Grassmannian $Gr(k,n)$. The authors study the tropicalization of this map and also study the properties of its image, called the \emph{Stiefel} image, inside the tropical Grassmannian. The authors in \cite{fink2015stiefel} relate the points inside the Stiefel image, to the class of \emph{regular transversal matroid subdivisions}, which as the name suggests is the class of regular matroidal subdivisions where each maximal cell corresponds to a transversal matroid. Since LPM are transversal, LPM subdivisions are also transversal matroid subdivisions. Additionally, we obtain the following corollary as a direct consequence of \cite[Theorem 6.20]{fink2022presentations}

\begin{corollary}
Let $L$ be the tropical linear space dual to a LPM subdivision. Then $L$ lies in the corresponding Stiefel image.  
\end{corollary}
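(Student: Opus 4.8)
The plan is to realize this corollary as a direct specialization of the Fink--Rinc\'on description of the tropical Stiefel image, so that the work reduces to checking that every LPM subdivision is a \emph{regular transversal matroid subdivision} in the required sense. Three ingredients enter: regularity of the subdivision, transversality of each of its maximal cells, and the characterization theorem \cite[Theorem 6.20]{fink2022presentations}.

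First I would set up notation: let $\Sigma$ be an LPM subdivision of $\P_{\M[P,Q]}$, with $\M[P,Q]$ of rank $r$ on $[n]$, and let $w_{\Sigma}$ be a weight vector inducing it. By Theorem~\ref{thm:LPM_is_regular}, $\Sigma$ is regular, and by Corollary~\ref{cor:LPM_in_Dressian}, $w_{\Sigma}$ lies in $\Dr(\M[P,Q])$, so it is a tropical Pl\"ucker vector and the tropical linear space $L$ dual to $\Sigma$ is precisely the one determined by $w_{\Sigma}$. Next I would note that each maximal cell of $\Sigma$ is a lattice path matroid polytope $\P_{\M[P_{i},Q_{i}]}$ and that $\M[P_{i},Q_{i}]$ is a transversal matroid: by Definition~\ref{def:LPM} it equals $\M[\cA_{i}]$ for the interval presentation $\cA_{i}$ whose $j$-th set is the integer interval running from the $j$-th north step of $P_{i}$ to the $j$-th north step of $Q_{i}$. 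Two features are worth recording: each $\cA_{i}$ is a subpresentation of the single interval presentation $\cA$ of $\M[P,Q]$ itself, and every such presentation has exactly $r$ sets, equal to the rank. Consequently $\Sigma$ is a regular transversal matroid subdivision whose cell presentations all have the same size $r$ and are mutually coherent, and the relevant ambient object is $\TGr(r,n)$.

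Finally I would invoke \cite[Theorem 6.20]{fink2022presentations}, which upgrades the one containment observed in \cite{fink2015stiefel} to a full description: a tropical linear space lies in the tropical Stiefel image of $r \times n$ matrices exactly when its dual regular subdivision has only transversal matroid cells. Applying this to $L$ gives that $L$ lies in the corresponding Stiefel image, which is the assertion. The only genuine obstacle is checking that the hypotheses of Theorem~6.20 are met with no hidden compatibility condition between the presentations of different cells; the observation above --- that every cell presentation $\cA_{i}$ is a subpresentation of the fixed presentation $\cA$ of $\M[P,Q]$, so in particular all of them have size $r$ --- is exactly what discharges this, after which the corollary follows with no further computation.
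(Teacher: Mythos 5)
Your proposal follows essentially the same route as the paper, which obtains the corollary by combining regularity of LPM subdivisions (Theorem~\ref{thm:LPM_is_regular}), the fact that lattice path matroids are transversal so that every LPM subdivision is a regular transversal matroid subdivision, and then citing \cite[Theorem 6.20]{fink2022presentations} as a direct consequence. Your additional remarks on the interval presentations of the cells only flesh out the transversality hypothesis that the paper asserts in one line, so the argument is correct and not materially different.
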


In \cite[Proposition 5.1]{fink2015stiefel} a facet description for transversal matroid polytopes is provided and \cite[Corollary 6.21]{fink2022presentations} provides a partial characterization of transversal matroids in terms of its facets. Based on these results, we propose the following question

\begin{question}\label{ques:Question1}
Let $P_{M}$ be the matroid polytope of a matroid $M$, such that all of its faces are LPM polytopes. Does this imply that $M$ is also a LPM polytope?     
\end{question}

We observe that an affirmative answer, along with Lemma \ref{lem:LPM_faces_are_lpm} would provide a full characterization of LPM polytopes in terms of their faces. Also, we already know due to prior results that with the assumptions in the question, $M$ is both transversal \cite[Corollary 6.20]{fink2022presentations} and a positroid \cite[Theorem 3.20]{m=2amplut}. Hence, it is also worthwhile to inquire about the ways in which the three different classes of matroids namely; transversal matroids, lattice path matroids and positroids interact. A subsequent study of relations between Stiefel tropical linear spaces and LPM subdivisions will be explored elsewhere.

We recall that a matroidal subdivision is completely determined by its 3-skeleton \cite{olarte2019local}. In recent work \cite{joswig23}, the authors introduce the class of \emph{permutahedral subdivisions} which is the class of polyhedral subdivisions of generalized permutahedra into cells that are generalized permutahedra. They also that the 2-skeleton of a permutahedral subdivision does not completely determine the subdivision. In the background of these results, we would be happy to understand how the class of \emph{LPM subdivisions} which we introduced in this paper, behave and possibly find out the criterion which completely determines a LPM subdivision.

We also comment on the location of the positroid cells corresponding to LPM in the stratification of the positive Grassmannian. We consider two well-known families of cells in the positive Grassmannian \cite{benedetti2023lattice}

\begin{definition}
A positroid cell $\Pi$ is called a \emph{Schubert cell} if a generic point $U \in \Pi$ gives rise to a representable matroid $\mathcal{M}_{I} = ([n], \mathcal{B})$ where $B \in \mathcal{B}$ if and only if $I <_{1} B$, where $<_{1}$ is the usual total order on $[n]$.     
\end{definition}

\begin{definition}
A positroid cell $\Pi$ is called a \emph{Richardson cell} if a generic point $U \in \Pi$ gives rise to a representable matroid $\mathcal{M}_{I}^{J} = ([n], \mathcal{B})$ where $B \in \mathcal{B}$ if and only if $I <_{1} B <_{1} J$, where $<_{1}$ is the usual total order on $[n]$.   
\end{definition}

Schubert matroids correspond to Schubert cells and lattice path matroids correspond to Richardson cells. We wish to understand these Richardson cells in depth, given the context of lattice path matroids and in the light of questions from algebraic geometry concerning positroid and Richardson varieties as mentioned in \cite{knutson2013positroid,benedetti2023lattice}. We are currently working on a sequel to our work here, in the context of the new definition of lattice path flag matroids \cite{benedetti2023lattice} and to look at equivalent questions in the realm of flag matroids, along with the flag matroid equivalent of the Dressian, i.e, \emph{Flag Dressian} \cite{brandt2021tropical} and the associated tropical flag variety in \cite{tewarifut}.

For our results about the amplituhedron, our results have two facets. Firstly, we provide a matroidal treatment to the well-known BCFW style recurrence relations for positroidal dissections 
of the hypersimplex. For the $m=2$ amplituhedron, via \emph{T-duality} described in \cite{m=2amplut}, these dissections correspond to a dissection of the amplituhedron in terms of \emph{Grasstopes} \cite{m=2amplut}. However, not much is known about the relations between triangulations of the amplituhedron and dissections of the hypersimplex in the case of the $m=4$ amplituhedron. We provide a first counterpart of positroid dissections of the hypersimplex for BCFW triangulations of $\mathcal{A}_{n,k,4}$. We wish to explore the possibility of equivalent notions of T-duality for the $m=4$ amplituhedron as well. We also wish to examine connections between combinatorial objects and LPM's other than the ones discussed here for example \emph{chord diagrams} and \emph{domino bases} described in \cite{even2021amplituhedron}. We also point the reader to recent work done on weakly separated collections and matroidal subdivisions \cite{early2019weakly}, which also correlates to some of your observations and is an interesting avenue for further exploration.

\bibliographystyle{siam}
\bibliography{biblio.bib}

\end{document}